\documentclass[a4paper,11pt]{amsart}
\usepackage{amsmath,amsthm,amssymb,amsfonts,enumerate,color,esint}
\usepackage[pdftex]{graphicx}
\usepackage{pgfplots}
    \pgfplotsset{compat=1.18} 
\usetikzlibrary{shapes.geometric}
\usetikzlibrary{hobby}
\usepackage{float}
\usepackage{subfig}

\usepackage{mathtools}

\oddsidemargin = 9pt \evensidemargin = 9pt \textwidth = 440pt

\usepackage[colorlinks=true, allcolors=blue]{hyperref}
\usepackage{amsrefs}


\newcommand{\N}{\mathbb{N}}

\newcommand{\R}{\mathbb{R}}
\newcommand{\eps}{\varepsilon}

\newcommand{\abs}[1]{\left|#1\right|}

\newcommand{\sgn}{\operatorname{sgn}}

\renewcommand{\div}{\operatorname{div}}
\newcommand{\supp}{\operatorname{supp}}

\newcommand{\dist}{\operatorname{dist}}

\newtheorem{thm}{Theorem}[section]
\newtheorem{prop}[thm]{Proposition}
\newtheorem{cor}[thm]{Corollary}
\newtheorem{lem}[thm]{Lemma}
\theoremstyle{definition}
\newtheorem{defn}[thm]{Definition}
\newtheorem{rem}[thm]{Remark}

\newtheorem{notation}[thm]{Notation}

\newtheorem{ex}[thm]{Example}

\numberwithin{equation}{section}

\allowdisplaybreaks

\author[M. G. Delgadino]{\href{https://zaytam.github.io/}{Mat\'ias G.~Delgadino}}

\address{Department of Mathematics\\
The University of Texas at Austin\\
2515 Speedway, Austin\\
TX 78712, United States of America}
\email{matias.delgadino@math.utexas.edu}

\author[M. Vaughan]{\href{https://maryvaughan.github.io/}{Mary Vaughan}}

\address{Department of Mathematics and Statistics\\
The University of Western Australia\\
35 Stirling HWY\\
Crawley WA 6009, Australia}
\email{mary.vaughan@uwa.edu.au}

\keywords{Continuous Steiner symmetrizations,
nonlocal seminorms,
fractional thin film equation, 
higher order equations
}

\subjclass[2010]{Primary: 
35R11, 
35G20, 
35C06. 
Secondary: 
35B40, 
74G30
}

\begin{document}

\title[Symmetrizations and uniqueness of solutions to nonlocal equations]{Continuous symmetrizations and uniqueness of solutions to nonlocal equations}

\begin{abstract}
We show that nonlocal seminorms are strictly decreasing under the continuous Steiner rearrangement. This implies that all solutions to nonlocal equations which arise as critical points of nonlocal energies are radially symmetric and decreasing. Moreover, we show uniqueness of solutions by exploiting the convexity of the energies under a tailored interpolation in the space of radially symmetric and decreasing functions. As an application, we consider the long time dynamics of a higher order nonlocal equation which models the growth of symmetric cracks in an elastic medium.
\end{abstract}

\maketitle

\section{Introduction}
In the recent paper \cite{CHVY}, Carrillo, Hittmeir, Volzone and Yao used continuous Steiner symmetrization to show that all critical points of
\begin{equation}\label{eq:energy1}
\mathcal{E}[\rho]=\underbrace{\frac{1}{p-1}\|\rho\|_{L^p(\R^n)}^p}_{\text{Local Repulsion}}+\qquad \underbrace{\frac{1}{2}\int_{\R^n}\int_{\R^n} \rho(x)\rho(y) W(x-y)\;dxdy}_{\text{Nonlocal Attraction}}  
\end{equation}
are radially symmetric and decreasing as long as $W:\R^n\to\R$ is isotropic and attractive, meaning $W(z)=w(|z|)$ with $w'>0$. Noticing that the nonlinear aggregation-diffusion equation
\begin{equation}\label{eq:aggregation}
    \partial_t\rho =\Delta \rho^p+\nabla\cdot(\rho\nabla W*\rho) \quad \mbox{in}~\R^n \times (0,T)
\end{equation}
is the gradient flow of $\mathcal{E}$ in \eqref{eq:energy1}, they were able to conclude that all steady states of \eqref{eq:aggregation} are radially symmetric and decreasing, see also \cites{CHMV,huang2022nonlinear} for the extension of this result to more singular potentials. Nonlocal attraction repulsion of interacting particle models have recently garnered a lot of attention in the mathematical community, noticing in particular the case of the Patlak-Keller-Segel model \cites{blanchet2006two,dolbeault2004optimal,yao2014asymptotic}. Under an appropriate scaling limit, these models converge towards the higher order degenerate Cahn-Hilliard equation  \cites{topaz2006nonlocal,delgadino2018convergence,carrillo2023competing,carrillo2023degenerate,elbar2023degenerate}, where the local repulsion potential is given by the Dirichlet energy, or the $H^1$ energy.

The main aim of this paper is to extend the methods in \cite{CHVY} to more singular nonlocal equations. More specifically, we consider the models that arise when the 
repulsion potential energy is given by a fractional Gagliardo semi-norm
\begin{equation}\label{eq:seminorm}
[f]_{W^{s,p}(\R^n)}^p :=  \int_{\R^n} \int_{\R^n} \frac{|f(x) - f(y)|^p}{|x-y|^{n+sp}} \, dx \, dy,
\end{equation}
with $s\in(0,1)$. In the case $p=2$, we denote $H^s:=W^{s,2}$. 

As an application, we study the long time behavior of the fractional thin film equation
\begin{equation}\label{eq:thinfilm}
\partial_t u - \div (u^m \nabla (-\Delta)^s u) = 0  \quad \mbox{in}~\R^n \times (0,T),
\end{equation}
where $m\in \R$ and $(-\Delta)^s$ denotes the fractional Laplacian of order $0 < 2s < 2$.
It was recently proved by Lisini that \eqref{eq:thinfilm} with $m=1$ is the 2-Wasserstein gradient flow of the square of the $H^s$ seminorm, up to multiplying by an explicit constant depending on dimension $n$ and $s \in (0,1)$, see \cite{Lisini}.
For $m\not=1$, interpreting \eqref{eq:thinfilm} is an open problem; we reference \cite{Dolbeault} for $m \in (0,1)$. 
This equation arises as a model for the propagation of symmetric hydraulic fractures in an elastic medium, see below for more details.

We finally bring attention to the fact that modelling attraction and repulsion isotropically does not necessarilly imply radial symmetry of steady state solutions. When the repulsion potential is nonlocal there are several examples of non-radial energy minimizers, see for instance \cite{kolokolnikov2011stability}.



\subsection{Symmetric decreasing rearrangements}

Symmetric rearrangements are invaluable tools in the study of symmetry of solutions to partial differential equations. Thanks to the famous inequalities of Riesz \cite{https://doi.org/10.1112/jlms/s1-5.3.162}, P\'olya--Szeg\"o \cite{polya1951isoperimetric}, Almgren--Lieb \cite{almgren1989symmetric}, see also \cites{LiebLoss, burchard2009short}, we know that the absolute minimizer of many physical energies needs to be radially symmetric and decreasing. Hence, it follows that ground state solutions associated to partial differential equations that arise as first variations of these energies need to be radially symmetric and decreasing. However, this does not imply directly the symmetry of non-minimizing critical points, if they exist.

Continuous symmetrizations provide a useful way to deal with critical points, see \cite{Kawohl}. The continuous Steiner symmetrization $f^\tau$, $0 \leq \tau \leq \infty$, is a continuous interpolation between the original function $f$ and its Steiner symmetrization, see Figure \ref{fig:symmetrization}. 
We write the precise definition with more details in Section \ref{sec:prelim}. In \cites{Brock95,Brock}, Brock used this method to show radial symmetry of any positive solution to the nonlinear $p$-Laplace equations. More recently, Carrillo, Hittmeir, Volzone, and Yao revisited this technique to show symmetry of steady states of isotropic aggregation equations \cite{CHVY}, see also Proposition \ref{prop:pastresult} below. We further mention that the continuous Steiner symmetrization is used in \cite{Topaloglu} to establish a discrete isoperimetric inequality in $\R^2$ for Riesz-type nonlocal energies. 

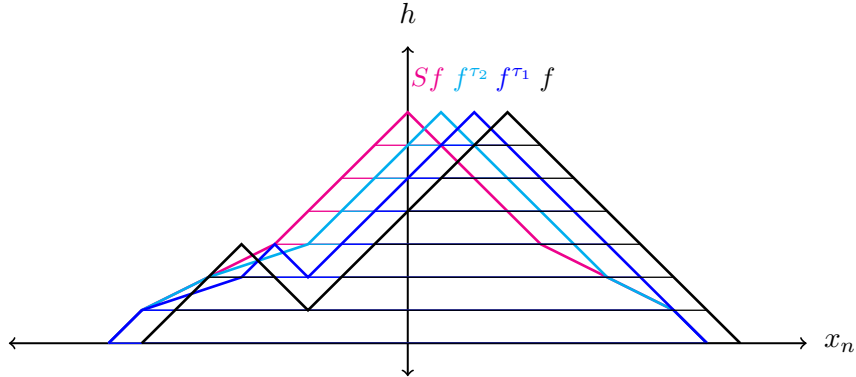
\begin{figure}[htbp]
 \begin{tikzpicture}[scale=1.75, use Hobby shortcut, closed=false]
 \tikzset{bullet/.style={diamond,fill,inner sep=1.25pt}}
\draw[line width=.75pt,<->] (0,0)--(0,2.5); 
	\draw (0,2.75) node {$h$};
\draw[line width=.75pt,<->] (-3,.25)--(3,.25); 
	\draw (3.25,.25) node {$x_n$};
\draw[line width=1pt, magenta] (-2.25,.25)--(-2,.5)--(-1.5,.75)--(-1,1)--(0,2)--(1,1)--(1.5,.75)--(2,.5)--(2.25,.25);
	\draw[magenta] (.15,2.25) node {\small $Sf$};
\draw[line width=.5pt, magenta] (-2.25,.25)--(2.25,.25); 
\draw[line width=.5pt,magenta] (-2,.5)--(2,.5);
\draw[line width=.5pt,magenta] (-1.5,.75)--(1.5,.75);
\draw[line width=.5pt,magenta] (-1,1)--(1,1);
\draw[line width=.5pt,magenta] (-.75, 1.25)--(.75,1.25);
\draw[line width=.5pt,magenta] (-.5, 1.5)--(.5,1.5);
\draw[line width=.5pt,magenta] (-.25, 1.75)--(.25,1.75);
\draw[line width=1pt, cyan] (-2.25,.25)--(-2,.5)--(-1.5,.75)--(-.75,1)--(.25,2)--(1.25,1)--(1.5,.75)--(2,.5)--(2.25,.25);
	\draw[cyan] (.48,2.25) node {\small $f^{\tau_2}$};
\draw[line width=.5pt, cyan] (-2.25,.25)--(2.25,.25); 
\draw[line width=.5pt,cyan] (-2,.5)--(2,.5);
\draw[line width=.5pt,cyan] (-1.5,.75)--(1.5,.75);
\draw[line width=.5pt,cyan] (-.75,1)--(1.25,1);
\draw[line width=.5pt,cyan] (-.5, 1.25)--(1,1.25);
\draw[line width=.5pt,cyan] (-.25, 1.5)--(.75,1.5);
\draw[line width=.5pt,cyan] (0, 1.75)--(.5,1.75);
\draw[line width=1pt, blue] (-2.25,.25)--(-2,.5)--(-1.25,.75)--(-1,1)--(-.75,.75)--(.5,2)--(1.75,.75)--(2,.5)--(2.25,.25);
	\draw[blue] (.8,2.25) node {\small $f^{\tau_1}$};
\draw[line width=.5pt, blue] (-2.25,.25)--(2.25,.25); 
\draw[line width=.5pt,blue] (-2,.5)--(2,.5);
\draw[line width=.5pt,blue] (-1.25,.75)--(-.75,.75);
\draw[line width=.5pt,blue] (-.75,.75)--(1.75,.75);
\draw[line width=.5pt,blue] (-.5,1)--(1.5,1);
\draw[line width=.5pt,blue] (-.25, 1.25)--(1.25,1.25);
\draw[line width=.5pt,blue] (0, 1.5)--(1,1.5);
\draw[line width=.5pt,blue] (.25, 1.75)--(.75,1.75);
\draw[line width=1pt] (-2,.25)--(-1.25,1)--(-.75,.5)-- (.75,2)--(2.5,.25);
	\draw (1.05,2.25) node {\small $f$};
\draw[line width=.5pt] (-2,.25)--(2.5,.25); 
\draw[line width=.5pt] (-1.75,.5)--(2.25,.5);
\draw[line width=.5pt] (-1.5,.75)--(-1,.75);
\draw[line width=.5pt] (-.5,.75)--(2,.75);
\draw[line width=.5pt] (-.25,1)--(1.75,1);
\draw[line width=.5pt] (0, 1.25)--(1.5,1.25);
\draw[line width=.5pt] (.25, 1.5)--(1.25,1.5);
\draw[line width=.5pt] (.5, 1.75)--(1,1.75);
\end{tikzpicture}
\caption{The continuous Steiner symmetrization $f^\tau$ for $0 < \tau_1 < \tau_2 < \infty$ as it interpolates between the function $f$ and its Steiner symmetrization $Sf$.}
\label{fig:symmetrization}
\end{figure}

Our first main result is that the Gagliardo seminorms are decreasing under continuous Steiner symmetrizations. 
It is well-known that the Gagliardo seminorms \eqref{eq:seminorm} are the natural energies associated to fractional $p$-Laplacians and thus are linked to free energies arising from fractional equations, such as \eqref{eq:thinfilm}.
They also arise in game theory \cite{Caffarelli}, anomalous diffusion \cite{KM}, minimal surfaces \cite{minimal}, to name only a few.
We reference the reader to \cite{Hitchhikers} and the references therein for more on fractional Sobolev spaces and fractional $p$-Laplacians.

\begin{thm}\label{thm:main2}
Let $0 < s < 1$ and $1 < p < \infty$.
For any positive $f\in L^1(\R^n)\cap C(\R^n)$ that is not radially decreasing about any center,  there are constants $\gamma = \gamma(n,s,p,f)>0$, $\tau_0 = \tau_0(f)>0$, and a hyperplane $H$ such that 
\begin{equation}\label{eq:main estimate}
 [f^{\tau}]_{W^{s,p}(\R^n)}^p \leq  [f]_{W^{s,p}(\R^n)}^p- \gamma \tau \quad \hbox{for all}~0 \leq \tau \leq \tau_0,
\end{equation}
where $f^{\tau}$ is the continuous Steiner symmetrization of $f$ about $H$.
\end{thm}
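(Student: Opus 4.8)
The plan is to reduce \eqref{eq:main estimate} to a quantitative Riesz-type inequality for the level sets of $f$ and then to differentiate it along the symmetrization, following the scheme of \cite{CHVY} (rather than routing through polarizations and Brock's approximation of $f^\tau$, which seems less suited to extracting a linear rate). We may assume $[f]_{W^{s,p}(\R^n)}<\infty$, as otherwise there is nothing to prove. The starting point is the elementary identity, valid for $a,b\ge0$ and $1<p<\infty$,
\begin{equation*}
|a-b|^p=\frac{p(p-1)}{2}\int_0^\infty\!\!\int_0^\infty |r-s|^{p-2}\,\big|\chi_{\{a>r\}}-\chi_{\{b>r\}}\big|\,\big|\chi_{\{a>s\}}-\chi_{\{b>s\}}\big|\,dr\,ds,
\end{equation*}
which one checks by evaluating the right-hand side using $\big|\chi_{\{a>r\}}-\chi_{\{b>r\}}\big|=\chi_{[\min(a,b),\max(a,b))}(r)$. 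Applying it with $a=f(x)$, $b=f(y)$, integrating against $|x-y|^{-n-sp}$, and using Tonelli's theorem, I would rewrite
\begin{equation*}
[f]_{W^{s,p}(\R^n)}^p=\frac{p(p-1)}{2}\int_0^\infty\!\!\int_0^\infty |r-s|^{p-2}\,\mathcal J(E_r,E_s)\,dr\,ds,\qquad E_t:=\{f>t\},
\end{equation*}
where $\mathcal J(A,B):=\iint\frac{|\chi_A(x)-\chi_A(y)|\,|\chi_B(x)-\chi_B(y)|}{|x-y|^{n+sp}}\,dx\,dy$, and, since the $E_t$ are nested, $\mathcal J(A,B)=2\int_A\int_{B^c}|x-y|^{-n-sp}\,dx\,dy$ whenever $A\subseteq B$. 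Since the continuous Steiner symmetrization preserves the distribution of $f$, it merely replaces the family $\{E_t\}_{t>0}$ by $\{E_t^\tau\}_{t>0}$ with the same parametrization, so the whole problem becomes to control $\mathcal J(E_r^\tau,E_s^\tau)-\mathcal J(E_r,E_s)$.

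For monotonicity I would write, for $A\subseteq B$, $\mathcal J(A,B)=2|A|\int_{\R^n}|z|^{-n-sp}\,dz-2\int_A\int_B|x-y|^{-n-sp}\,dx\,dy$ (each piece made finite by truncating the kernel near the origin and removing the truncation at the end); the first term depends only on $|A|$ and is invariant, while the second is a repulsive interaction energy with a strictly radially decreasing kernel and can only increase under the rearrangement (cf.\ \cite{CHVY}), whence $\mathcal J(E_r^\tau,E_s^\tau)\le\mathcal J(E_r,E_s)$ and $[f^\tau]_{W^{s,p}(\R^n)}^p\le[f]_{W^{s,p}(\R^n)}^p$. For a rate I would fix $H=\{x_n=0\}$ and use the explicit description of $f^\tau$ from Section~\ref{sec:prelim}: each $x_n$-slice of $E_t$ translates toward $H$ at unit speed, intervals merging upon collision. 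Differentiating $\tau\mapsto\mathcal J(E_r^\tau,E_s^\tau)$ at $\tau=0^+$ then yields a boundary integral over $\partial E_r\cup\partial E_s$ in which each boundary piece is weighted by the sign of the displacement of its slice times the increment of $|x-y|^{-n-sp}$ between a point and its reflection across $H$; arguing as in \cite{CHVY}, this integrand has a definite sign, so $\delta(r,s):=-\lim_{\tau\to0^+}\tau^{-1}\big(\mathcal J(E_r^\tau,E_s^\tau)-\mathcal J(E_r,E_s)\big)\ge0$, and $\delta(r,s)=0$ exactly when, to first order, all slices of $E_r$ and all slices of $E_s$ undergo one and the same rigid translation.

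It remains to choose $H$ and to upgrade this to \eqref{eq:main estimate}. If $\delta(r,s)=0$ for a.e.\ $(r,s)$ held for \emph{every} affine hyperplane, then fixing a direction and varying the translate forces a.e.\ $E_t$ to have single-interval slices sharing one centre independent of $t$; letting the direction vary then makes a.e.\ $E_t$ convex and, in every direction, symmetric about a hyperplane orthogonal to it, hence a ball, with all these balls concentric about a common $q$; by nestedness $f$ would be radially decreasing about $q$, contrary to hypothesis. So there is a hyperplane $H$ and a set $G$ of pairs $(r,s)$, $r\ne s$, of positive measure (which we may take bounded away from the diagonal) on which $\delta(r,s)$ is bounded below. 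For $(r,s)\in G$, choosing $\tau_0>0$ uniform over $G$ so small that none of the intervals relevant to $E_r,E_s$ has collided on $[0,\tau_0]$, the map $\tau\mapsto\mathcal J(E_r^\tau,E_s^\tau)$ is smooth with derivative $\le-\tfrac12\delta(r,s)$ there, so $\mathcal J(E_r^\tau,E_s^\tau)\le\mathcal J(E_r,E_s)-\tfrac12\delta(r,s)\,\tau$ on $[0,\tau_0]$. Substituting this bound into the layer-cake representation of $[f^\tau]_{W^{s,p}(\R^n)}^p$ obtained above, with $\gamma$ a positive multiple of $\int_G|r-s|^{p-2}\delta(r,s)\,dr\,ds$, yields \eqref{eq:main estimate}.

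The step I expect to be the main obstacle is exactly the passage from the qualitative sign of the boundary integral to a rate bounded away from zero \emph{uniformly} over a family of level-set pairs of positive measure, coupled with the choice of $H$ ensuring this family is non-empty; the subtlety is visible already for functions whose level sets are balls with collinear but distinct centres, where some hyperplanes produce only an $O(\tau^2)$ decrease, so that "$f$ not symmetric about $H$" is by itself insufficient. Subsidiary technical points are the non-smoothness of the continuous Steiner symmetrization at interval collisions and the truncation/Tonelli bookkeeping needed to justify the layer-cake identity and the splitting of $\mathcal J$ into individually divergent pieces.
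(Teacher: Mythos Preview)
Your approach is essentially the same as the paper's: a layer-cake decomposition reducing the seminorm to a double integral over level-set pairs weighted by $|r-s|^{p-2}$, a regularization of the singular kernel so that the CHVY machinery applies, and then a proof that the linear rate survives the removal of the cutoff. The paper carries this out by writing $|f(x)-f(y)|^p-|f(x)|^p-|f(y)|^p$ (rather than $|f(x)-f(y)|^p$ itself) in layer-cake form, which leads directly to the interaction $-\int\int \chi_{E_h}(x)\chi_{E_u}(y)W_\varepsilon(x-y)\,dx\,dy$ without your infinite--minus--infinite splitting; the two are equivalent after your truncation step. Where the paper is sharper than your sketch is precisely at the two points you flag as obstacles. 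First, the choice of $H$: rather than your contradiction argument (whose characterization ``$\delta(r,s)=0$ iff one rigid translation'' is not quite right --- slices that are already centred do not move, and slices whose centres lie on the same side of $H$ all translate by the same $\pm1$ regardless of $r,s$), the paper fixes $H$ by a nonlinear centre-of-mass condition $\int \tan^{-1}(x_n)f\,dx=0$, which forces both of the sets $B_\pm^a$ of ``off-centre'' level sets to have positive measure and gives the quantitative input directly. Second, the uniformity of the rate in the truncation parameter: the paper isolates this in a one-dimensional lemma showing $|K_\varepsilon'(r)|\ge|K_1'(r)|$, so the CHVY lower bound on $\frac{d^+}{d\tau}I_\varepsilon$ is automatically $\varepsilon$-free. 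For $p\ne2$ the paper also needs a small extra step (your ``bounded away from the diagonal'') ensuring the positive-measure set of good pairs can be taken with $h$ and $u$ separated, so that $|h-u|^{p-2}$ contributes a harmless constant; this is Lemma~\ref{lem:h-separation}.
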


Our result extends to a more general class of kernels as highlighted in the next remark. 

\begin{rem}
As a direct consequence of the proof, Theorem \ref{thm:main2} holds for any fractional seminorm of the form
\[
\int_{\R^n} \int_{\R^n} \frac{|f(x) - f(y)|^p}{K(|x-y|)} \, dx \, dy
\]
where $K:[0,\infty) \to [0,\infty]$ is increasing. 
\end{rem}

To recover the corresponding local results as $s \to 1^-$ and $s\to 0^+$, one must normalize the energy by  multiplying \eqref{eq:main estimate} by $s(1-s)$, see \cites{BBM,MazyaShaposhnikova}. 
We will showcase in Remark \ref{rem:gamma-limit} that the constant $\gamma = \gamma(s)$ in Theorem \ref{thm:main2} remains strictly positive and bounded as $s\to 0^+,1^-$. 
Consequently, $s(1-s) \gamma \to 0$ as $s \to 0^+,1^-$. 
Regarding $s=0$, it is known that continuous Steiner symmetrizations preserve the $L^p$ norms, see \cite{CHVY}.
As for $s=1$, we have the following. 

\begin{cor}\label{cor:Lip}
Let $1 < p < \infty$. 
For any non-negative $f\in L^1(\R^n)\cap C(\R^n)$ that is not radially decreasing about any center,  there is a constant $\tau_0 = \tau_0(f)>0$ such that
\[
[f^{\tau}]_{W^{1,p}(\R^n)}^p \leq  [f]_{W^{1,p}(\R^n)}^p \quad \hbox{for all}~0 \leq \tau \leq \tau_0.
\]
Consequently,
\begin{equation}\label{eq:f-Lip}
 [ f^{\tau}]_{\text{Lip}(\R^n)} \leq   [f]_{\text{Lip}(\R^n)} \quad \hbox{for all}~0 \leq \tau \leq \tau_0.
\end{equation}
\end{cor}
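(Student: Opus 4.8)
The plan is to derive Corollary \ref{cor:Lip} from Theorem \ref{thm:main2} by two successive limits in the parameters of the Gagliardo energy: first $s \to 1^-$, which produces the $W^{1,p}$ estimate, and then $p \to \infty$, which produces \eqref{eq:f-Lip}. Throughout, $[g]_{W^{1,p}(\R^n)}^p$ is understood as $\|\nabla g\|_{L^p(\R^n)}^p$ up to the dimensional constant $C_{n,p}$ that the Bourgain--Brezis--Mironescu formula attaches to the $s \to 1^-$ limit of the seminorm \eqref{eq:seminorm}; this constant appears on both sides of the asserted inequality and cancels. Likewise $[g]_{\text{Lip}(\R^n)} = \|\nabla g\|_{L^\infty(\R^n)}$. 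I would also reduce at the outset to $f > 0$, since the general non-negative case follows by applying the result to the harmless perturbation $\max(f,\delta\phi)$, with $\phi \in L^1(\R^n) \cap C(\R^n)$ a fixed positive function, and letting $\delta \to 0^+$ (using that this perturbation converges to $f$ in $L^1$ and stays not radially decreasing for $\delta$ small).

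\emph{Step 1: the limit $s \to 1^-$.} Fix $p \in (1,\infty)$ and $\tau \in [0,\tau_0]$, where $\tau_0 = \tau_0(f)$ is the constant in Theorem \ref{thm:main2}; crucially $\tau_0$ is independent of $s$, and the hyperplane $H$ there reflects only the geometric way in which $f$ fails to be radially decreasing, so it too may be fixed independently of $s$ and $p$. For every $s \in (0,1)$, multiplying \eqref{eq:main estimate} by the normalizing factor $s(1-s)$ (cf.\ the discussion preceding the statement) gives
\[
s(1-s)\,[f^{\tau}]_{W^{s,p}(\R^n)}^p \ \leq\ s(1-s)\,[f]_{W^{s,p}(\R^n)}^p - s(1-s)\,\gamma(s)\,\tau .
\]
If $\nabla f \notin L^p(\R^n)$ the desired $W^{1,p}$ bound is trivial, so assume $f \in W^{1,p}(\R^n)$; then $f^\tau \in L^p(\R^n)$ too, since continuous Steiner symmetrization preserves $L^p$ norms \cite{CHVY}. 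I would then apply the Bourgain--Brezis--Mironescu theorem \cites{BBM,MazyaShaposhnikova} to both $g = f$ and $g = f^\tau$: for $g \in L^p(\R^n)$,
\[
\lim_{s \to 1^-} s(1-s)\,[g]_{W^{s,p}(\R^n)}^p \,=\, C_{n,p}\,\|\nabla g\|_{L^p(\R^n)}^p \qquad \bigl(\,=+\infty \ \text{ if } g \notin W^{1,p}(\R^n)\,\bigr).
\]
By Remark \ref{rem:gamma-limit}, $\gamma(s)$ stays bounded as $s \to 1^-$, so $s(1-s)\gamma(s)\tau \to 0$ and the right-hand side of the displayed inequality converges to $C_{n,p}\|\nabla f\|_{L^p(\R^n)}^p < \infty$. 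Hence $\liminf_{s\to1^-} s(1-s)[f^\tau]_{W^{s,p}(\R^n)}^p < \infty$, which by \cite{BBM} forces $f^\tau \in W^{1,p}(\R^n)$; passing to the limit in the displayed inequality then yields $C_{n,p}\|\nabla f^\tau\|_{L^p(\R^n)}^p \leq C_{n,p}\|\nabla f\|_{L^p(\R^n)}^p$, that is the first assertion of Corollary \ref{cor:Lip}, valid for all $p \in (1,\infty)$ and all $\tau \in [0,\tau_0]$.

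\emph{Step 2: the limit $p \to \infty$, and the delicate point.} Taking $p$-th roots, $\|\nabla f^\tau\|_{L^p(\R^n)} \leq \|\nabla f\|_{L^p(\R^n)}$ for every $p \in (1,\infty)$. If $[f]_{\text{Lip}(\R^n)} = \infty$ then \eqref{eq:f-Lip} is vacuous; otherwise one sends $p \to \infty$ and uses $\|h\|_{L^p(\R^n)} \to \|h\|_{L^\infty(\R^n)}$ to obtain $\|\nabla f^\tau\|_{L^\infty(\R^n)} \leq \|\nabla f\|_{L^\infty(\R^n)}$, i.e.\ \eqref{eq:f-Lip}. The one point that genuinely needs care — and the main obstacle — is exactly this last passage: on the infinite-measure space $\R^n$ the convergence $\|h\|_{L^p} \to \|h\|_{L^\infty}$ holds only once $h \in L^{p_0}(\R^n)$ for some finite $p_0$, which is automatic if $\nabla f$ is well localized (e.g.\ $f$ compactly supported, in which case the $f^\tau$ all have support in one fixed bounded set) but can fail for a general Lipschitz $f \in L^1(\R^n) \cap C(\R^n)$. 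I would circumvent this either by an exhaustion/approximation argument reducing to compactly supported $f$, or, more directly, by invoking the classical fact that continuous symmetrizations do not increase the Lipschitz seminorm — which can be read off from the description of $f^\tau$ as a limit of iterated two-point polarizations of $f$, each of which is $1$-Lipschitz as an operation, together with lower semicontinuity of $[\,\cdot\,]_{\text{Lip}(\R^n)}$ under pointwise convergence (see \cite{Kawohl}).
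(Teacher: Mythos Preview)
Your proposal is correct and follows essentially the same route as the paper: Remark \ref{rem:gamma-limit} multiplies \eqref{eq:main estimate} by $s(1-s)$ and sends $s \to 1^-$ via \cites{BBM,MazyaShaposhnikova} to obtain the $W^{1,p}$ inequality, and the Lipschitz bound is then the $p \to \infty$ consequence (the paper also points to \cite{Brock95}*{Theorem 11} as an alternative for this last step). You are in fact more careful than the paper about the $p \to \infty$ passage on an infinite-measure space.
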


The control on the Lipschitz norm in \eqref{eq:f-Lip} was first established by Brock for a different variant of continuous Steiner symmetrizations but can be adapted for our setting, see \cite{Brock95}*{Theorem 11}. 
We echo his observation in \cite{Brock95}*{Remark 8}
which states that Lipschitz continuity is the best regularity one can expect under continuous Steiner symmetrizations as kinks can form when symmetrizing a $C^1$ function that is not quasiconvex. The inequality in Corollary \ref{cor:Lip} is not strict as a simple counterexample can be constructed using that the norm is local, see Example \ref{ex:zero}. 


Since Steiner symmetrizations are rearrangements in $\R^n$ with respect to a single direction, the proof of Theorem \ref{thm:main2} relies on a corresponding one-dimensional result for characteristic functions (see Lemma \ref{lem:one-d}). 
In fact, the definition of continuous Steiner symmetrization of a set (and hence a function) is understood first in terms of open intervals, then finite unions of open intervals, and lastly infinite unions.  
Accordingly, we have found it insightful to make a special study of those functions whose level sets can be expressed as a finite union of open intervals, also known as \emph{good functions}. 
We establish an explicit version of Theorem \ref{thm:main2} for good functions. 
Here, we will explain the simplest setting and delay the detailed result until Section \ref{sec:good}. 

Let $p=2$ and consider a function with a simple geometry, that is, a positive function $f: \R \to \R$ whose level sets are each a single open interval. 
In particular, for each $h >0$, there are at most two solutions to $f(x) = h$ which we denote by $x_- = x_-(h)$ and $x_+ = x_+(h)$. 
Note that, if $f$ is radially decreasing, then each level set $(x_-,x_+)$ is centered at the origin. 
We use a continuous Steiner symmetrization with constant speed towards the origin:
\begin{equation}\label{eq:1D-defn}
x_{\pm}^\tau = x_{\pm} - \tau \sgn(x_+ + x_-) \quad \hbox{for all}~0 \leq \tau \leq \frac{|x_++x_-|}{2}. 
\end{equation}
The energy $[f^\tau]_{H^s(\R)}^2$ is a double integral involving $f^\tau$ over the spatial variables $x,y \in \R$. Formally, we can make a change of variables to write the energy instead as a double integral involving $x_{\pm}^\tau, y_{\pm}^\tau$ over the heights $h,u>0$ where $f^\tau(x_{\pm}) = h$ and $f^\tau(y_{\pm}) = u$ (see Lemma \ref{lem:energy for good}). 
With this,
we can write the derivative of the energy for $f^\tau$ in terms of the level sets of $f$ as
\begin{align*}
\frac{d[f^\tau]_{H^s(\R)}^2}{d \tau}\bigg|_{\tau=0}
	&=c_s
	 \int_{0}^\infty \int_{0}^\infty  (\sgn(x_++x_-)-\sgn(y_++y_-))\\ 
	 &\quad \bigg[\frac{\sgn(x_+-y_+)}{\abs{x_+-y_+}^{2s}}
	 -\frac{\sgn(x_+-y_-)}{\abs{x_+-y_-}^{2s}} 
	 -\frac{\sgn(x_--y_+)}{\abs{x_--y_+}^{2s}}
	 +\frac{\sgn(x_--y_-)}{\abs{x_--y_-}^{2s}}
	  \bigg] \, dh \, du.
\end{align*} 
In the integrand, we see the derivative in $\tau$ of  \eqref{eq:1D-defn} multiplied by an antiderivative of the kernel at the endpoints of the corresponding level sets.   
We will show in the proof of Proposition \ref{lem:deriv for good} that this product is negative when the level sets $(x_-,x_+)$ and $(y_-,y_+)$ are not centered. We should note that a different expression for the derivative can already be found in \cite{CHVY}*{Equation (2.23)} for more regular kernels. In Section~\ref{sec:good}, we also write an explicit formula for the derivative $\frac{d}{d\tau}|\nabla f^\tau|_{L^p}^p$, see Proposition~\ref{lem:good-local} and Corollary~\ref{cor:local deriv}.

\subsection{Uniqueness}

Uniqueness of critical points within the class of positive and fixed mass functions does not follow immediately from the fact that these are radially symmetric and decreasing. For the specific case of the nonlinear aggregation equation \eqref{eq:aggregation}, when $p\in(1,2)$ one can construct an ad hoc isotropic attractive interaction potential such that there are an infinite amount of steady state solutions see \cite{delgadino2022uniqueness}*{Theorem 1.2}. 
On the other hand, in the case $p\in[2,\infty)$, 
\cite{delgadino2022uniqueness} also shows uniqueness of critical points, by introducing a height function interpolation curve over radially symmetric profiles and showing that the associated energy \eqref{eq:energy1} is strictly convex under the interpolation.
See Section~\ref{sec:interp} for definitions and details.

We show that the square of the $H^s$ seminorms are strictly convex under the height function interpolation. 

\begin{thm}\label{thm:nonlocal-convexity}
Fix $0 < s < 1$. 
Let $f_0,f_1 \in C(\R^n)$ be two distinct, non-negative, symmetric decreasing functions with unit mass and let $\{f_t\}_{t\in[0,1]}$ be the height function interpolation between $f_0$ and $f_1$. 
Then 
\[
t \mapsto \|f_t\|^2_{H^s(\R^n)}   
\]
is strictly convex for all $0 < t < 1$. 
\end{thm}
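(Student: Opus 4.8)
The plan is to pass to mass coordinates, in which the height function interpolation is, by definition, affine (see Section~\ref{sec:interp}). Writing $f_t(x)=h_t(m)$ where $m\in(0,1)$ is the cumulative mass of $f_t$ inside $B_{|x|}$, the interpolation reads $h_t=(1-t)h_0+th_1$ (affine in $t$), and the radial profile is reconstructed from $\omega_n r_t(m)^n=\int_0^m dm'/h_t(m')$ with $\omega_n=|B_1|$. Integrating out the angular variables then gives the one-dimensional representation
\begin{equation}\label{eq:plan-rep}
\|f_t\|_{H^s(\R^n)}^2 = c_n\int_0^1\!\!\int_0^1 \frac{\big(h_t(m)-h_t(m')\big)^2}{h_t(m)\,h_t(m')}\;\kappa\big(r_t(m),r_t(m')\big)\,dm\,dm',
\end{equation}
where $\kappa(r,\rho)=\iint_{S^{n-1}\times S^{n-1}}|r\omega-\rho\eta|^{-(n+2s)}\,dS(\omega)\,dS(\eta)$ is a positive symmetric kernel, finite off the diagonal $\{r=\rho\}$, blowing up as $r\to\rho$, and homogeneous of degree $-(n+2s)$. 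Equivalently, undoing the substitution, $\|f_t\|_{H^s(\R^n)}^2=\iint_{(0,\infty)^2}\beta(\tau,\sigma)\,\big(f_t^{*}(\tau)-f_t^{*}(\sigma)\big)^2\,d\tau\,d\sigma$ with $\beta$ a positive symmetric kernel and $f_t^{*}$ the decreasing rearrangement of $f_t$ in the volume variable.

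After a standard density reduction (approximate $f_0,f_1$ by smooth, rapidly decaying symmetric decreasing functions so that the integrals converge and differentiation under the integral sign is legitimate), I would differentiate \eqref{eq:plan-rep} twice in $t$. Because $h_t$ is affine, the squared-difference structure separates off a manifestly nonnegative term, $2\iint_{(0,\infty)^2}\beta(\tau,\sigma)\,\big(\partial_t f_t^{*}(\tau)-\partial_t f_t^{*}(\sigma)\big)^2\,d\tau\,d\sigma\ge0$ (as $\beta\ge0$), leaving a cross term carrying the second derivative of the non-affine profile. To control it I would use: (i) the reciprocal being convex on $(0,\infty)$, so that $t\mapsto 1/h_t(m')$ is convex and hence $t\mapsto V_t(m)=\int_0^m dm'/h_t(m')=\omega_n r_t(m)^n$ is convex; and (ii) the precise behaviour of $\kappa$ near and away from the diagonal together with its homogeneity. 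Combining (i), (ii) and the identity $\partial_m V_t(m)=1/h_t(m)$ — which permits an integration by parts in the mass variable, exploiting the symmetry $m\leftrightarrow m'$ of the integrand — one reorganizes the cross term to conclude $\tfrac{d^2}{dt^2}\|f_t\|_{H^s(\R^n)}^2\ge0$ on $[0,1]$, i.e.\ convexity. Strict convexity on $(0,1)$ follows by tracking the equality cases of this argument: any degeneracy forces $\partial_t f_t^{*}\equiv0$ on a subinterval, hence the heights $h_t$ independent of $t$, hence (being affine) $h_0\equiv h_1$ and $f_0=f_1$, contrary to hypothesis.

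The main obstacle, as the structure above suggests, is the sign of the cross term. Even though $V_t(m)$ is convex in $t$, the map $t\mapsto\kappa\big(r_t(m),r_t(m')\big)$ need not be convex (a convex inner function composed with a kernel that is neither monotone nor convex in both arguments), so no pointwise-in-$(m,m')$ estimate can work and one must genuinely use the integration over the pair of mass variables. I anticipate that the clean way to finish is to produce a ``sum of squares'' identity for $\tfrac{d^2}{dt^2}\|f_t\|_{H^s(\R^n)}^2$ — an $H^s$-type Dirichlet energy of a tangent field assembled from $\partial_t h_t$ and $\partial_t V_t$, in the spirit of second-variation and displacement-convexity computations — or, failing that, a sufficiently sharp estimate exploiting the diagonal blow-up and the homogeneity of $\kappa$; this is the technical heart of the argument.
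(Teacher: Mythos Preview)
Your proposal has a genuine gap: you concede that the ``technical heart'' --- the sign of the cross term after differentiating twice --- is unresolved, and you only \emph{anticipate} that a sum-of-squares identity or a sharp kernel estimate will close it. That is not a proof. There is also a slip in the setup: the height function interpolation of Section~\ref{sec:interp} makes $H_t(m)=(1-t)H_0(m)+tH_1(m)$ affine in $t$, where $H_t(m)$ is the \emph{truncation height} defined by $\int_{\R^n}\min\{f_t,H_t(m)\}\,dx=m$. This is not your variable (cumulative mass of $f_t$ inside $B_{|x|}$); the two are related by a $t$-dependent reparametrization $\tilde m = 1-m+H_t(m)/H_t'(m)$, so your $h_t$ is not affine in $t$ and the clean separation you sketch does not hold as written.

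The paper's route is much shorter and sidesteps the difficulty entirely. One regularizes the kernel, setting $W_\varepsilon(x)=(|x|^{n+2s}+\varepsilon)^{-1}$, and for $p=2$ uses the exact identity
\[
\mathcal F_\varepsilon^2(f_t)=\int_{\R^{2n}}|f_t(x)-f_t(y)|^2W_\varepsilon(x-y)\,dx\,dy
=C_\varepsilon\|f_t\|_{L^2(\R^n)}^2-2\int_{\R^{2n}} f_t(x)f_t(y)\,W_\varepsilon(x-y)\,dx\,dy.
\]
Under the height interpolation the $L^2$ term has vanishing second $t$-derivative (the $p=2$ case of \eqref{eq:Lp-height}), while the remaining piece is an interaction energy with the smooth, bounded, radially \emph{increasing} kernel $\tilde W_\varepsilon:=-2W_\varepsilon$, whose strict convexity in $t$ is precisely Proposition~\ref{prop:DYY-kernel} (quoted from \cite{delgadino2022uniqueness}). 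Thus $\tfrac{d^2}{dt^2}\mathcal F_\varepsilon^2(f_t)>0$ for each $\varepsilon\in(0,1]$, and one passes to $\varepsilon\to0^+$ using the monotonicity in $\varepsilon$ of $W_\varepsilon'$. The hard second-variation computation you are aiming at is already packaged inside that cited interaction-energy result; the $|f(x)-f(y)|^2$ structure is only used to split the singular seminorm into a harmless $L^2$ part and a regular interaction energy.
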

The uniqueness of solutions to fractional Laplace equations is a deep and active area of research, see for instance \cites{frank2013uniqueness,frank2016uniqueness,CdMHMV,fracPM,fracPM2, Cabre-Sire, BSV,CaffarelliSilvestre}. Currently the methods to show uniqueness within the class of radially symmetric states are quite involved and at times only address the uniqueness of global minimizers and not of general critical points \cites{frank2013uniqueness,frank2016uniqueness}. The spirit of Theorem~\ref{thm:nonlocal-convexity} is to try to simplify the theory, when possible.

The uniqueness methods presented here do not cover the general Gagliardo seminorm $W^{s,p}$ for $p \not=2$. Still, we are able to show the convexity under the interpolant of $W^{1,p}$ seminorms, for $p\ge 2n/(n+1)$, see Proposition~\ref{prop:interW1p}. Moreover, we also cover the case of the potential energy when the potential is radial and increasing, which we use in the next section, see Proposition~\ref{prop:V}.

\subsection{Application to fractional thin film equations}

As an application of Theorem \ref{thm:main2} and Theorem \ref{thm:nonlocal-convexity}, we study the uniqueness of stationary solutions and long time asymptotic of fractional thin film equations given by \eqref{eq:thinfilm}. The fractional thin film equation with exponent $s=1/2$ and mobility $m=3$, 
was originally derived to model the growth of symmetric hydraulic fractures in an elastic material arising from the pressure of a viscous fluid pumped into the opening, see the original references \cites{geertsma1969rapid,zheltov1955hydraulic}.
A practical man-made application of this phenomenon is commonly known as fracking, which enhances oil or gas extraction from a well. In nature, this process occurs in volcanic dikes when magma causes fracture propagation through the earth’s crust, and also when water opens fractures in ice shelf.

Nonetheless, due to the nonlocal and higher order nature of this equation,  there is a striking lack of mathematical analysis regarding solutions to fractional thin film equations. Indeed, \eqref{eq:thinfilm} is an interpolation between the second order porous medium equation ($s=0$), see \cite{vazquez2007porous}, and the fourth order thin film equation ($s=1$), see \cites{bertozzi1994lubrication,otto1998lubrication}. We mention that the study of self-similar solutions was first started by Spence and Sharp \cite{spence1985self}, but rigorous existence of solutions was only recently shown by Imbert and Mellet in \cites{ImbertMellet11,ImbertMellet15}. 

Even more recently, Segatti and Vazquez \cite{SegattiVazquez}, studied the long time behavior of \eqref{eq:thinfilm} with linear mobility $m(u)=u$, by studying the Barenblatt re-scalings \cite{Barenblatt}. Namely, if $u$ is a solution to \eqref{eq:thinfilm}, we consider $v$ defined by the re-scaling
\[
    u(x,t) = \frac{1}{(1+t)^{\alpha}} v\bigg(\frac{x}{(1+t)^{\beta}},\log(1+t)\bigg) \quad \hbox{in}~\R^n \times (0,T)
\]
where $\alpha, \beta>0$ are given explicitly by
\[
\alpha = \frac{n}{n+2(1+s)}, \quad \beta = \frac{1}{n+2(1+s)}.
\]
The function $v = v(y,\tau)$ satisfies the re-scaled equation
\begin{equation}\label{rescaled}
    \partial_\tau v=\nabla \cdot\left(v \nabla_y\left((-\Delta)^sv+\beta \frac{|y|^2}{2}\right)\right)
\end{equation}
which contains an extra confining term. Under an extra qualitative assumption on the integrability of the gradient, Segatti and Vazquez showed \cite{SegattiVazquez}*{Theorem 5.9}  that $v$ converges as $\tau\to\infty$ to a solution of
\begin{equation}\label{eq:stationary}
\begin{cases}
(-\Delta)^s v = \sum_i \lambda_i \chi_{\mathcal{P}_i}(y) - \frac{\beta}{2} |y|^2 & \hbox{in}~\supp(v) \subset \R^n\\
v \geq 0 & \hbox{in}~\R^n
\end{cases}
\end{equation}
where $\mathcal{P}_i$ are the connected components of $\supp(v)$ and $\lambda_i$ are the corresponding Lagrange multipliers, which can change from one connected component to another.

As is the case for higher order equations, like for instance the classical thin film equation, uniqueness results that do not assume strict positivity of the functions are rare. We mention the work  Majdoub, Masmoudi and Tayachi \cite{majdoub2018uniqueness} as one of the few available examples on uniqueness of source solutions to the thin film equation. With respect to the problem at hand, Segatti and Vazquez showed the solution to \eqref{eq:stationary} is unique under the extra assumption that the solution has a single connected component. In this work, we instead use the re-arrangement techniques described above to show that the solution to \eqref{eq:stationary} is first radially symmetric and then unique, by using Theorem~\ref{thm:main2} and Theorem~\ref{thm:nonlocal-convexity} respectively.

\begin{thm}\label{thm:uniqueness}
Fix $0 < s < \frac{1}{2}$ and let $v\in C^{0,1}(\R^n)$ be a compactly supported solution to \eqref{eq:stationary}, then $v$ is radially decreasing. Moreover, up to scaling, it is uniquely given by
\begin{equation}\label{eq:explicitv}
v(x) = \frac{1}{\lambda^{s}\kappa} (1-\lambda|x|^2)_+^{1+s}    
\end{equation}
where $\lambda>0$ and $\kappa = 4^s \Gamma(s+2)\Gamma(s + n/2)/\Gamma(n/2)$.
\end{thm}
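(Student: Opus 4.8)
The plan is to split the proof into three steps: read \eqref{eq:stationary} as a first‑order optimality condition, deduce radial symmetry about the origin from Theorem~\ref{thm:main2}, and deduce uniqueness from the convexity of Theorem~\ref{thm:nonlocal-convexity} before identifying the explicit profile. \emph{First, the energy and the optimality condition.} Associate to \eqref{eq:thinfilm} with $m=1$ the energy
\[
\mathcal E[w] \;=\; c_{n,s}\,[w]_{H^s(\R^n)}^2 \;+\; \tfrac{\beta}{2}\int_{\R^n}|y|^2\,w(y)\,dy,
\]
where $c_{n,s}>0$ is the Lisini constant of \cite{Lisini}, normalized so that the first variation of $w\mapsto c_{n,s}[w]_{H^s(\R^n)}^2$ is $(-\Delta)^s w$; equation \eqref{rescaled} is the $2$‑Wasserstein gradient flow of $\mathcal E$ over non‑negative densities of fixed mass (see \cite{SegattiVazquez}). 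Since $0<2s<1$, $(-\Delta)^s v$ is well defined pointwise on the Lipschitz, compactly supported $v$, and $[v]_{H^s(\R^n)}^2<\infty$ because $v$ is Lipschitz with compact support and $s<1$. A compactly supported $v\in C^{0,1}(\R^n)$ solving \eqref{eq:stationary} satisfies, on each connected component $\mathcal P_i$ of $\{v>0\}$, the identity $(-\Delta)^s v + \tfrac{\beta}{2}|y|^2\equiv\lambda_i$, hence the first‑order (McCann‑type) optimality condition $v\,\nabla\!\big((-\Delta)^s v + \tfrac{\beta}{2}|y|^2\big)=0$ holds a.e.\ in $\R^n$. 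Consequently, for any sufficiently regular transport path $\tau\mapsto\rho_\tau$ with $\rho_0=v$ and $\partial_\tau\rho_\tau+\div(\rho_\tau V_\tau)=0$, integration by parts gives $\tfrac{d}{d\tau}\mathcal E[\rho_\tau]\big|_{\tau=0}=\int_{\R^n}V_0\cdot\big(v\,\nabla\!\big((-\Delta)^s v+\tfrac{\beta}{2}|y|^2\big)\big)\,dy=0$. Taking $\rho_\tau=v(\cdot-\tau e)$ already forces $\int_{\R^n}y\,v(y)\,dy=0$, i.e.\ the barycenter of $v$ is the origin.

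\emph{Radial symmetry.} Suppose $v$ is not radially decreasing about the origin. Since its barycenter is the origin, $v$ is then not radially decreasing about any center, and Theorem~\ref{thm:main2} (applied with the origin as the fixed center, as in the scheme of \cite{CHVY}; cf.\ Proposition~\ref{prop:pastresult}) produces a hyperplane $H$ through the origin and constants $\gamma,\tau_0>0$ with $[v^\tau]_{H^s(\R^n)}^2\le[v]_{H^s(\R^n)}^2-\gamma\tau$ for $0\le\tau\le\tau_0$, where $v^\tau$ is the continuous Steiner symmetrization about $H$. Since $v^\tau$ is a mass‑preserving rearrangement and, $H$ passing through the origin, $\int_{\R^n}|y|^2 v^\tau\,dy$ is non‑increasing (split $|y|^2$ into the symmetrized coordinate and the rest and use Fubini), we get $\mathcal E[v^\tau]\le\mathcal E[v]-c_{n,s}\gamma\tau$ for small $\tau>0$. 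But $v^\tau$ is a transport path through $v$, so by the previous paragraph $\tfrac{d}{d\tau}\mathcal E[v^\tau]\big|_{\tau=0}=0$, a contradiction. Hence $v$ is radially decreasing about the origin; in particular $\{v>0\}=B_R$ for some $R>0$, there is a single connected component, and \eqref{eq:stationary} reduces to $(-\Delta)^s v=\lambda-\tfrac{\beta}{2}|y|^2$ in $B_R$, $v=0$ off $\overline{B_R}$.

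\emph{Uniqueness and explicit form.} The profile \eqref{eq:explicitv} solves \eqref{eq:stationary}: by the classical formula for the fractional Laplacian of a truncated power (see \cite{SegattiVazquez} and the references therein), $(-\Delta)^s(1-|x|^2)_+^{1+s}=\kappa\big(1-\tfrac{n+2s}{n}|x|^2\big)$ for $|x|<1$ with $\kappa$ as in the statement, and the scaling $x\mapsto\sqrt{\lambda}\,x$ yields $(-\Delta)^s v(x)=1-\tfrac{n+2s}{n}\lambda|x|^2$ on $B_{\lambda^{-1/2}}$, matching \eqref{eq:stationary} with one component, multiplier $1$, and $\tfrac{\beta}{2}=\tfrac{n+2s}{n}\lambda$. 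For uniqueness, restrict $\mathcal E$ to non‑negative radially decreasing densities of a prescribed mass; let $f_0=v$ and $f_1$ be two such solutions and let $\{f_t\}_{t\in[0,1]}$ be the (mass‑preserving) height‑function interpolation of Section~\ref{sec:interp}. By Theorem~\ref{thm:nonlocal-convexity}, $t\mapsto[f_t]_{H^s(\R^n)}^2$ is strictly convex, and by Proposition~\ref{prop:V}, $t\mapsto\int_{\R^n}|y|^2 f_t$ is convex (since $|y|^2$ is radial and increasing), so $t\mapsto\mathcal E[f_t]$ is strictly convex. As the interpolation is a mass‑preserving transport variation at each endpoint, the first step gives $\tfrac{d}{dt}\mathcal E[f_t]\big|_{t=0}=\tfrac{d}{dt}\mathcal E[f_t]\big|_{t=1}=0$; a strictly convex function on $[0,1]$ with this property is constant, so $f_0=f_1$. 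Hence the radially decreasing solution is unique and equals \eqref{eq:explicitv}, the free parameter $\lambda$ (equivalently the mass, equivalently $\beta$) recording the ``up to scaling''.

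\emph{Main obstacle.} The delicate point is making rigorous that the continuous Steiner path and the height‑interpolation path are genuine first‑order transport variations for the stationary condition \eqref{eq:stationary} — that the integration‑by‑parts identity above and the differentiability of $\tau\mapsto\mathcal E[v^\tau]$ (resp.\ $t\mapsto\mathcal E[f_t]$) hold at the merely Lipschitz $v$, whose positivity set and multipliers $\lambda_i$ are a priori uncontrolled before symmetrization. This is exactly where the structural analysis of continuous Steiner symmetrizations developed in the paper (and in \cite{CHVY}) — in particular the explicit derivative formulas for good functions and the coordination of the symmetrization center with the barycenter — is needed. A secondary point is that the compactly supported, non‑negative $v$ (vanishing on its free boundary) must be covered by the non‑negative versions of Theorem~\ref{thm:main2} and Theorem~\ref{thm:nonlocal-convexity}, or handled by a routine approximation. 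Granting these, the computation in the third step and pinning the center to the origin are elementary.
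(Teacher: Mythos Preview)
Your overall strategy---use Theorem~\ref{thm:main2} for radial symmetry, then Theorem~\ref{thm:nonlocal-convexity} plus the height-function interpolation for uniqueness---matches the paper. The genuine gap is in the radial-symmetry step, and it is not merely a regularity issue as your ``Main obstacle'' paragraph suggests.

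You apply the \emph{untruncated} continuous Steiner symmetrization $v^\tau$ and then argue that $\frac{d}{d\tau}\mathcal E[v^\tau]\big|_{\tau=0}=0$ because $v^\tau$ is a transport path and $v\,\nabla\big((-\Delta)^s v+\tfrac{\beta}{2}|y|^2\big)=0$ a.e. This fails structurally: before symmetry is established, $\{v>0\}$ may have several connected components $\mathcal P_i$ with \emph{different} Lagrange multipliers $\lambda_i$, and the untruncated symmetrization does not preserve $\supp v$ nor the mass in each $\mathcal P_i$ (see Figure~\ref{fig:symmetrization} and the remark after \eqref{eq:stationary}). A path that transfers mass between components, or pushes mass off $\supp v$ into the region where \eqref{eq:stationary} gives no information, need not have vanishing first variation---the constants $\lambda_i$ do not cancel. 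Your integration-by-parts identity therefore does not yield zero.

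The paper resolves this by introducing the \emph{truncated} symmetrization $\tilde v^\tau$ (Section~\ref{sec:truncated}), which slows the level sets near $h=0$ so that $\supp \tilde v^\tau=\supp v$ and the mass in each $\mathcal P_i$ is preserved (Proposition~\ref{prop:Lip constant}). With this, Lemma~\ref{lem:perturbation} gives $\frac{d}{d\tau}\mathcal E_s(\tilde v^\tau)\big|_{\tau=0}=0$ by pairing the equation against $\tilde v^\tau-v$ componentwise. The strict decrease is then obtained by comparing $\tilde v^\tau$ with $v^\tau$ in $H^s$ via Lemma~\ref{lem:Hs-norms close}; this comparison is where the restriction $0<s<\tfrac12$ is actually used (integrability of $|x-y|^{-(n+2s-1)}$ near the diagonal, see \eqref{eq:2}). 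Your argument never invokes $s<\tfrac12$, which is a signal that something is missing.

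For the uniqueness step your sketch is essentially the paper's, but note that showing $\frac{d^\pm}{dt}\mathcal E_s(f_t)=0$ at the endpoints requires some care: the paper uses the a~priori Lipschitz bound of Proposition~\ref{prop:interW1p} to control $\{f_t\}$ in $C^\alpha$ and then argues as in Lemma~\ref{lem:perturbation}, rather than appealing to a generic ``transport variation'' principle.
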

\begin{rem}
    The function $v$ in \eqref{eq:explicitv} belongs to $C^{1+s}$, hence the Lipschitz assumption in Theorem~\ref{thm:uniqueness} is natural, but it is not currently known.
\end{rem}

Following the recent work of Lisini \cite{Lisini} (see also the work of Otto \cite{otto1998lubrication}), 
\eqref{rescaled} is the 2-Wasserstein gradient flow of the following energy functional
\begin{equation}\label{eq:energyyyy}
\mathcal{E}(v) =
 c_{n,s}[v]_{H^s(\R^n)}^2
	 +  \frac{\beta}{2}\int_{\R^n}|y|^2 v(y) \, dy.
\end{equation}
where $c_{n,s} \simeq s(1-s)>0$ is a normalizing constant, depending only on $n \in \N$ and $s \in (0,1)$, such that 
$c_{n,s}[v]_{H^s}^2 =\langle (-\Delta)^s v, v \rangle_{L^2(\R^n)}$. 
Hence, any steady state is a critical point of \eqref{eq:energyyyy}. 
By Theorem \ref{thm:main2}, we can show that if $v$ is not radially decreasing, then \eqref{eq:energyyyy} is decreasing to first order under continuous Steiner symmetrization.  However, notice from Figure \ref{fig:symmetrization} that $f^\tau$ does not necessarily preserve the support of $f$, which we need for the proof of Theorem \ref{thm:uniqueness} since \eqref{eq:stationary} is only satisfied in $\supp(v)$. To address this issue, we slow down the speed of the level sets near the base of the solution $v$. 
In particular,
for $h>0$, if $x_{\pm} = x_{\pm}(h)$ are the boundary of the super-level set $\{f>h\}$, then 
we replace \eqref{eq:1D-defn} with
\[
x_{\pm}^\tau = x_{\pm} - \tau  \sgn(x_++x_-)\min\left\{1, \frac{h}{h_0}\right\},
\]
for some small, fixed $h_0>0$. 
Unlike \eqref{eq:1D-defn}, the perturbation only makes sense for super-level sets. 
The regularity assumption on $v$ assures that the perturbation is well-defined, as in general level sets can fall, see Figure~\ref{fig:truncated}. This idea first appeared in the work of Carrillo, Hittmeier, Volzone, and Yao \cite{CHVY}, and is fundamental for these type of free boundary problems.

We should note that our uniqueness result does not complete the full characterization of the long time asymptotic of the fractional thin film equation. Our methods only cover the range $s\in(0,1/2)$, and require a Lipschitz regularity assumption, which is not currently known. Moreover, the convergence result of Segatti and Vazquez \cite{SegattiVazquez}*{Theorem 5.9} requires an extra qualitative condition on the integrability of the gradient of the solution, which is also not currently known.

Lastly, we note that the symmetrization methods in Theorem \ref{thm:uniqueness} hold for any equation that arises as a positive mass constrained critical point of energies that are a combination of:
\begin{itemize}
\item isotropic local first order semi-norms
$$
\int_{\R^n} G(|\nabla v|)\;dx
\quad \hbox{with $G:[0,\infty]\to \R$ convex}
$$
\item isotropic nonlocal semi-norms 
$$
\int_{\R^n}\int_{\R^n} \frac{|v(x)-v(y)|^p}{K(|x-y|)}\;dxdy
\quad \hbox{with $K:[0,\infty]\to[0,\infty]$ increasing}
$$
\item local functionals 
$$
\int_{\R^n}F(v)\;dx \quad \hbox{with $F:[0,\infty]\to\R$}
$$
\item isotropic interaction energies
$$
\frac{1}{2}\int_{\R^n}\int_{\R^n} W(|x-y|)v(x)v(y)\;dxdy
\quad \hbox{with $W:[0,\infty]\to \R$ increasing}
$$
\item radial potential functionals
$$
\int_{\R^n} U(|x|)v(x)\;dx \quad \hbox{with $U:[0,\infty]\to\R$ increasing}.
$$
\end{itemize}
The uniqueness strategy presented here is a bit more touchy and only holds for a strict subset of these equations.

\subsection{Organization of the paper}

The rest of the paper is organized as follows. 
Section \ref{sec:prelim} contains background and preliminary results on continuous Steiner symmetrizations. 
Theorem \ref{thm:main2} is proved in Section \ref{sec:Main Thm}. 
An explicit version of Theorem \ref{thm:main2} for good functions is discussed in Section \ref{sec:good}. Section \ref{sec:interp} presents the height function interpolation and proves Theorem~\ref{thm:nonlocal-convexity}.
Finally, Section \ref{sec:uniqueness} establishes properties of truncated continuous Steiner symmetrizations which are then used to prove Theorem \ref{thm:uniqueness}.

\section{Continuous Steiner symmetrization}\label{sec:prelim}

In this section, we present background and preliminaries on continuous Steiner symmetrizations with constant speed in the direction $e \in \mathbb{S}^{n-1}$. 
For simplicity in the presentation, we will assume that $e = e_n$ 
and denote $x = (x',x_n) \in \R^{n-1} \times \R$.

Given $x' \in \R^{n-1}$, we denote the section of an open subset $U \subset \R^n$ in the direction $e_n$ by
\[
U_{x'} = \{ x_n \in \R : (x',x_n) \in U\}.
\]
The Steiner symmetrization of $U$ with respect to the direction $e_n$ is defined by
\[
S(U) = \{ x = (x',x_n) \in \R^n : x_n \in U_{x'}^*\}
\]
where $U_{x'}^* = \{x_n \in \R : |x_n|< |U_{x'}|/2\}$ is the symmetric rearrangement of $U_{x'}$ in $\R$. 
Note that $|U_{x'}^*| = |U_{x'}|$.
To define $Sf$ for a nonnegative function $f \in L^1(\R^n)\cap C(\R^n)$, we denote the $h>0$ level sets of $f$ in the direction of $e_n$ by
\[
U_{x'}^h = \{ x_n \in \R : f(x',x_n) >h\}.
\]
Then, the Steiner symmetrization $Sf$ of $f$ in the direction $e_n$ 
is given by
\[
Sf(x)  = \int_0^{\infty} \chi_{S(U_{x'}^h)}(x_n) \, dh.
\]

\begin{defn}\label{defn:cont set}
The continuous Steiner symmetrization of an open set $U \subset \R$ is denoted by $M^{\tau}(U)$, $\tau \geq 0$, and defined as follows.
\begin{enumerate}
\item \emph{Intervals}. 
If $U = (y_-, y_+)$, then $M^{\tau}(U) = (y_-^\tau, y_+^{\tau})$ where
\[
\begin{cases}
y_-^\tau &= y_- - \tau  \sgn(y_++y_-)\\
y_+^\tau &= y_+ -\tau  \sgn(y_++y_-)
\end{cases}
\quad \hbox{for}~0 < \tau \leq \frac{\abs{y_++y_-}}{2}
\]
and 
\[
\begin{cases}
y_-^\tau &= - (y_+-y_-)/{2}\\
y_+^\tau &= (y_+-y_-)/{2}
\end{cases}
\quad \hbox{for}~\tau > \frac{\abs{y_++y_-}}{2}.
\]

\item \emph{Finite union of intervals}.
If $U = \bigcup_{i=1}^m I_i$, $m \in \N$, where $I_i$ are disjoint, open intervals, then
\[
M^{\tau}(U) = \bigcup_{i=1}^m M^{\tau}(I_i) \quad \hbox{for}~0 \leq \tau < \tau_1
\]
where $\tau_1$ is the first time that two intervals $M^{\tau}(I_i)$ touch. 
At $\tau_1$, we merge the two intervals and start again.
\item \emph{Countably infinite union of intervals}.
If $U = \bigcup_{i=1}^\infty I_i$ where $I_i$ are disjoint, open intervals, then
\[
M^{\tau}(U) = \bigcup_{i=1}^\infty M^{\tau}(U_i) \quad \hbox{where}~U_m = \bigcup_{i=1}^m I_i,~m \in \N.
\]
\end{enumerate}
\end{defn}

\begin{defn}\label{defn:cont-symm}
The continuous Steiner symmetrization of a nonnegative function $f \in L^1(\R^n)\cap C(\R^n)$ in the direction $e_n$ is denoted by $f^\tau $, $\tau \geq 0$, and defined as
\[
f^\tau (x) = \int_0^{\infty} \chi_{M^\tau(U_{x'}^h)}(x_n) \, dh \quad \hbox{for}~x = (x',x_n) \in \R^{n}.
\]
\end{defn}

By definition, $f^\tau$ interpolates continuously between $f = f^0$ and $Sf = f^\infty$, see Figure \ref{fig:symmetrization}. 
As a consequence of the layer-cake representation, the continuous Steiner symmetrization of $f$ preserves the $L^p$ norm, see \cite{CHVY}*{Lemma 2.14},
\begin{equation}\label{eq:preserves Lp}
\|f\|_{L^p(\R^n)} = \|f^\tau\|_{L^p(\R^n)},
\quad 1 \leq p \leq \infty.
\end{equation}
We also have the semigroup property presented in the next lemma. 

\begin{lem}[Lemma 2.1 in \cite{CHVY}]\label{lem:semigroup}
The collection of operators $(M^\tau)_{\tau \geq 0}$ satisfies the semigroup property. That is, for each $\tau_1, \tau_2\geq 0$ and any open set $U \subset \R$, 
\[
M^{\tau_1}(M^{\tau_2}(U)) = M^{\tau_1 + \tau_2}(U). 
\]
Consequently, $f^\tau$ satisfies the semigroup property:  $(f^{\tau_1})^{\tau_2} = f^{\tau_1+\tau_2}$ for $\tau_1, \tau_2 \geq 0$. 
\end{lem}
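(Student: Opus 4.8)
The plan is to prove the set identity $M^{\tau_1}(M^{\tau_2}(U))=M^{\tau_1+\tau_2}(U)$ by following the three-tier structure of Definition \ref{defn:cont set} — single intervals, finite unions, countable unions — and then deduce the statement for $f^\tau$ via the layer-cake representation of Definition \ref{defn:cont-symm}. For a single interval $U=(y_-,y_+)$ I would track the pair (center $c=(y_-+y_+)/2$, length $\ell=y_+-y_-$). The length is invariant, and by Definition \ref{defn:cont set} the center evolves by the constant-speed flow toward the origin, $c(\tau)=\sgn(c)\,(\abs{c}-\tau)_+$. Since this one-parameter family of maps on $\R$ (slide toward $0$, then rest at $0$) clearly satisfies $c(\tau_1)\circ c(\tau_2)=c(\tau_1+\tau_2)$ — one checks the two cases $\tau_2<\abs{c}$ and $\tau_2\ge\abs{c}$ separately — and the length does not change, the semigroup property holds for intervals.

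For finite unions I would induct on the number $m$ of components of $U$, the base case $m=1$ being the interval case. Let $\sigma\in(0,\infty]$ be the first merge time in the flow of $U$; if $\sigma=\infty$ the flow is componentwise for all time and the claim follows at once from the interval case applied to each $I_i$. Suppose $\sigma<\infty$. Before $\sigma$, $M^\tau(U)=\bigcup_i M^\tau(I_i)$ is a disjoint componentwise flow, so by the interval case the configurations $\{M^{\tau_1}(M^{\tau_2}(I_i))\}_i$ and $\{M^{\tau_1+\tau_2}(I_i)\}_i$ coincide as long as both are governed by the componentwise rule; in particular they have the same first merge time. If $\tau_2<\sigma$, this gives $M^{\tau_1}(M^{\tau_2}(U))=M^{\tau_1+\tau_2}(U)$ directly when $\tau_1+\tau_2<\sigma$, and when $\tau_1+\tau_2\ge\sigma$ it reduces — after the common merge at $\sigma$ — to the definitional identity $M^{\tau_1+\tau_2}(U)=M^{\tau_1+\tau_2-\sigma}(M^\sigma(U))$. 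If instead $\tau_2\ge\sigma$, then $M^{\tau_2}(U)=M^{\tau_2-\sigma}(M^\sigma(U))$ by definition, and $M^\sigma(U)$ has strictly fewer than $m$ components, so the inductive hypothesis applies and yields $M^{\tau_1}(M^{\tau_2}(U))=M^{\tau_1+\tau_2-\sigma}(M^\sigma(U))=M^{\tau_1+\tau_2}(U)$.

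For countable unions I would first record that $M^\tau$ is monotone with respect to inclusion on finite unions of intervals: for nested intervals this is a direct computation from the explicit flow above, and it propagates to finite unions since merges only enlarge a configuration (cf.\ \cite{CHVY}). Monotonicity ensures that $M^\tau(\cdot):=\bigcup_m M^\tau(U_m)$ is independent of the chosen exhaustion $\{U_m\}$ of a countable disjoint union of open intervals by finite subunions. Writing $U=\bigcup_i I_i$ and $U_m=\bigcup_{i\le m}I_i$, the sets $M^{\tau_2}(U_m)$ are finite unions of intervals, increase in $m$, and exhaust $M^{\tau_2}(U)$, whence
\[
M^{\tau_1}(M^{\tau_2}(U))=\bigcup_m M^{\tau_1}(M^{\tau_2}(U_m))=\bigcup_m M^{\tau_1+\tau_2}(U_m)=M^{\tau_1+\tau_2}(U),
\]
using the finite-union case in the middle equality. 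Finally, for $f\in L^1(\R^n)\cap C(\R^n)$ nonnegative, the construction in Definition \ref{defn:cont-symm} gives (again invoking monotonicity in the height parameter $h$) that the sections of the super-level sets of $f^{\tau_1}$ in direction $e_n$ are $M^{\tau_1}(U^h_{x'})$; hence $(f^{\tau_1})^{\tau_2}(x)=\int_0^\infty\chi_{M^{\tau_2}(M^{\tau_1}(U^h_{x'}))}(x_n)\,dh=\int_0^\infty\chi_{M^{\tau_1+\tau_2}(U^h_{x'})}(x_n)\,dh=f^{\tau_1+\tau_2}(x)$.

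The main obstacle is the finite-union step: one must verify that the ``merge and restart'' prescription is genuinely consistent with composing flows, which is precisely where the case analysis on the positions of $\tau_2$ and $\tau_1+\tau_2$ relative to the first merge time, together with the induction on the number of components, is essential. The countable step is comparatively soft once the inclusion-monotonicity of $M^\tau$ is in hand.
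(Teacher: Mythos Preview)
The paper does not prove this lemma; it simply records the statement and cites \cite{CHVY}*{Lemma 2.1}. Your proposal is a correct reconstruction of the standard argument and follows the natural three-tier structure dictated by Definition~\ref{defn:cont set}: the explicit center-length description for single intervals, induction on the number of components for finite unions with a case split at the first merge time $\sigma$, and the limit for countable unions via inclusion-monotonicity of $M^\tau$.

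One spot that could be written more carefully is the subcase $\tau_2<\sigma\le\tau_1+\tau_2$ in the finite-union step. You should make explicit that the first merge time of $M^{\tau_2}(U)$ is exactly $\sigma-\tau_2$ (this follows from the interval semigroup applied componentwise, since before $\sigma$ the flow is just the product of the individual interval flows). Then, by the restart rule in Definition~\ref{defn:cont set}(2),
\[
M^{\tau_1}(M^{\tau_2}(U))=M^{\tau_1-(\sigma-\tau_2)}\big(M^{\sigma-\tau_2}(M^{\tau_2}(U))\big)=M^{\tau_1+\tau_2-\sigma}(M^{\sigma}(U)),
\]
which equals $M^{\tau_1+\tau_2}(U)$ again by the definitional restart. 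As written, your phrase ``after the common merge at $\sigma$'' elides this identification. The remaining case $\tau_2\ge\sigma$ is where the induction on $m$ is actually invoked, and you handle that correctly. The passage to functions via the layer-cake representation is fine once you note (as you do) that monotonicity of $M^\tau$ guarantees the super-level sections of $f^{\tau}$ are precisely $M^\tau(U^h_{x'})$.
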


A priori, one could define $M^\tau$ with any sufficiently smooth speed $V = V(y,h): \R \times \R_+ \to \R$ by replacing $\sgn(y)$ with $V(y,h)$ in Definition \ref{defn:cont set}(1). 
With a different speed however, $(M^\tau)_{\tau \geq 0}$ will not necessarily satisfy the desired semigroup property. 
Instead, one should replace Definition \ref{defn:cont set}(1) with the following ODE
\begin{equation}\label{eq:ode}
\begin{cases}
\frac{d}{d\tau} [y^{\tau}_{\pm}] = -V(y_+^\tau+y_-^\tau,h) & \tau > 0 \\
y_{\pm}^\tau  = y_{\pm} & \tau = 0.
\end{cases}
\end{equation}
With this modification, $(M^\tau)_{\tau \geq 0}$ satisfies the semi-group property, as long as the level sets remain ordered, see Section~\ref{sec:truncated}. 

\begin{rem}\label{rem:Brockspeed}
The continuous symmetrization considered by Brock \cites{Brock,Brock95} is equivalent to taking $V(y,h)=y$.
\end{rem}

Lastly, we note the following consequence of the semigroup property.

\begin{lem}\label{lem:distance for fixed h}
Let $h>0$ and $\tau_1, \tau_2 \geq 0$. 
For a nonnegative function $f \in L^1(\R^n)\cap C(\R^n)$, we have
\[
\dist(\partial \{f^{\tau_1}> h\}, \partial \{f^{\tau_2}> h\}) \leq |\tau_1-\tau_2|.
\]
\end{lem}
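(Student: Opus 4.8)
The plan is to reduce everything to the one–dimensional picture in a fixed direction and then use the semigroup property. Fix $x' \in \R^{n-1}$ and consider the section $U_{x'}^h = \{x_n : f(x',x_n) > h\}$. By Definition \ref{defn:cont-symm}, the boundary $\partial\{f^\tau > h\}$ in the slice $x'$ is exactly $\partial M^\tau(U_{x'}^h)$, so it suffices to show, for every open set $U \subset \R$ and every $\tau_1,\tau_2 \geq 0$,
\[
\dist\big(\partial M^{\tau_1}(U), \partial M^{\tau_2}(U)\big) \leq |\tau_1 - \tau_2|,
\]
and then take the supremum over $x'$ (noting that the $n$-dimensional distance between the two free boundaries is the supremum over slices of the one-dimensional distances, or at worst is bounded by it). Without loss of generality assume $\tau_1 \leq \tau_2$ and set $\tau = \tau_2 - \tau_1 \geq 0$; by Lemma \ref{lem:semigroup} we have $M^{\tau_2}(U) = M^{\tau}(M^{\tau_1}(U))$, so writing $V = M^{\tau_1}(U)$ it is enough to prove
\[
\dist\big(\partial V, \partial M^{\tau}(V)\big) \leq \tau
\]
for an arbitrary open set $V \subset \R$.

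First I would establish this for a single open interval $V = (y_-,y_+)$: from Definition \ref{defn:cont set}(1), each endpoint moves by at most $\tau$ in absolute value (for $\tau \leq |y_++y_-|/2$ it moves exactly $\tau$ in the $-\sgn(y_++y_-)$ direction, and after the centering time the endpoints stop, having moved a total distance $\leq |y_++y_-|/2 \leq \tau$). Hence every point of $\partial M^\tau(V) = \{y_-^\tau, y_+^\tau\}$ is within $\tau$ of a point of $\partial V$, and conversely; the Hausdorff distance between the two-point sets is $\leq \tau$. Next, for a finite union $V = \bigcup_{i=1}^m I_i$ of disjoint open intervals, the flow moves each constituent interval's endpoints by $\leq \tau$ until two intervals collide, at which moment they merge and the merged interval then continues to evolve by the same rule. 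The key observation is that merging only \emph{removes} boundary points (two endpoints that have come together disappear from $\partial M^\tau(V)$), so every point of $\partial M^\tau(V)$ is still the image of a point of $\partial V$ that has travelled a total path length $\leq \tau$; and every point of $\partial V$ either survives to $\partial M^\tau(V)$ within distance $\tau$, or it has been absorbed into the interior of a merged interval, in which case its distance to $\partial M^\tau(V)$ is still $\leq \tau$ because at the collision time it coincided with another endpoint and both were within $\tau$ of their starting positions. Therefore $\dist(\partial V, \partial M^\tau(V)) \leq \tau$ for finite unions. Finally, for a countable union $V = \bigcup_{i=1}^\infty I_i$, Definition \ref{defn:cont set}(3) gives $M^\tau(V) = \bigcup_{i=1}^\infty M^\tau(V_m)$ with $V_m = \bigcup_{i=1}^m I_i$; passing to the limit in the uniform-in-$m$ bound $\dist(\partial V_m, \partial M^\tau(V_m)) \leq \tau$ (using monotonicity $V_m \uparrow V$ and that distances to a limiting boundary are controlled by limits of the distances) yields the claim in general.

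I expect the main obstacle to be the bookkeeping at merge times in the finite-union case — specifically, making precise the claim that merging can only help (i.e., that no boundary point of $M^\tau(V)$ ever "teleports" away from $\partial V$ and that absorbed endpoints remain within distance $\tau$). The cleanest way to handle this is to track, for each point $z \in \partial M^\tau(V)$, a Lipschitz-in-$\tau$ "ancestor curve" $\tau \mapsto z(\tau)$ with $z(0) \in \partial V$, $|z'(\tau)| \leq 1$, and $z(\tau) = z$; such a curve exists because between merge times the endpoints move with speed $0$ or $1$, and at a merge time the surviving boundary points are unaffected while the vanishing ones can be continued as constant (or along their partner's curve). This gives $\dist(z, \partial V) \leq \tau$ for the "one-sided" inclusion; the reverse inclusion ($\partial V$ is within $\tau$ of $\partial M^\tau(V)$) follows the same way by running the same ancestor curves forward. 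Alternatively, one can invoke the interval case together with the semigroup property on each short time interval between consecutive merges and sum the (at most $m-1$) contributions, but one must check the total adds up to at most $\tau$ rather than $(m-1)\tau$, which again reduces to the path-length observation. Everything else — the reduction via slices, the reduction via the semigroup property to $\tau_1 = 0$, and the countable-union limit — is routine.
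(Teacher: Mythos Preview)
Your approach is correct and essentially the same as the paper's: both reduce to the case $\tau_2=0$ via the semigroup property (Lemma~\ref{lem:semigroup}) and then appeal to the definition of $M^\tau$. The paper's proof simply declares the $\tau_2=0$ case ``clear from the definition'' in one line, whereas you spell out the interval/finite-union/countable-union breakdown and the merge-time bookkeeping that the paper leaves implicit.
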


\begin{proof}
If $\tau_2=0$, then it is clear from the definition that
\[
\dist(\partial \{f^{\tau_1}> h\}, \partial \{f> h\}) \leq \tau_1.
\]
The result follows from Lemma \ref{lem:semigroup}.
\end{proof}

\section{On Theorem \ref{thm:main2}}\label{sec:Main Thm}

This section contains the proof of Theorem \ref{thm:main2}. 
We begin by presenting a simplified version of the result in \cite{CHVY}. 

\begin{prop}[Proposition 2.15 in \cite{CHVY}]\label{prop:pastresult}
    Consider $W\in C^1(\R^n)$ an increasing radially symmetric kernel with associated interaction energy
    $$
        \mathcal{I}[f]=\int_{\R^n}\int_{\R^n} f(x)f(y)W(x-y)\, dx\,dy.
    $$
    Assume that $f\in L^1(\R^n)$ is positive and not radially decreasing. Then, there exists constants $\gamma = \gamma(W,f)>0$, $\tau_0 = \tau_0(f)>0$, and a hyperplane $H$, such that
    $$
        \mathcal{I}[f^\tau]\le \mathcal{I}[f]-\gamma \tau \quad \hbox{for all}~0 \leq \tau \leq \tau_0,
    $$
    where $f^\tau$ is the continuous Steiner symmetrization about $H$.
\end{prop}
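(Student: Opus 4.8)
The plan is to follow the blueprint described in the introduction for Theorem~\ref{thm:main2}: reduce the $n$-dimensional statement to a one-dimensional computation for characteristic functions of finite unions of open intervals, compute the derivative of the energy at $\tau=0$, and then upgrade the infinitesimal inequality to the linear bound on $[0,\tau_0]$ by means of the semigroup property. For the reduction, fix the symmetrization direction $e_n$ and write $x=(x',x_n)$. Since $W(z)=w(|z|)$ with $w$ increasing, for each fixed $z'\in\R^{n-1}$ the section $t\mapsto w(\sqrt{|z'|^2+t^2})$ is even and increasing in $|t|$; and because $\{f^\tau>h\}$ has sections $M^\tau(U^h_{x'})$, the slice $x_n\mapsto f^\tau(x',x_n)$ is exactly the one-dimensional continuous Steiner symmetrization of $x_n\mapsto f(x',x_n)$. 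Hence, by Fubini,
\[
\mathcal{I}[f^\tau]=\int_{\R^{n-1}}\int_{\R^{n-1}}\left(\int_\R\int_\R f^\tau(x',s)\,f^\tau(y',t)\,W\big(x'-y',\,s-t\big)\,ds\,dt\right)dx'\,dy',
\]
so it suffices to control the inner bilinear form under the simultaneous one-dimensional continuous Steiner symmetrization of the two slices, for an even kernel that is increasing in $|t|$. Applying the layer-cake representation in the height variables together with Definition~\ref{defn:cont set} (monotone approximation handling the countably-infinite-union case), this reduces further to controlling $\int_A\int_B K(s-t)\,ds\,dt$ for $A,B$ finite unions of open intervals moving under the one-dimensional flow and $K$ even and increasing in $|t|$.

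For a single pair of intervals $A_i=(a_-,a_+)$ and $B_j=(b_-,b_+)$ with endpoint velocities $\dot a_\pm=-\sgn(a_++a_-)$ and $\dot b_\pm=-\sgn(b_++b_-)$, a second antiderivative $G$ of $K$ (so $G''=K$) gives
\[
\int_{A_i}\int_{B_j}K(s-t)\,ds\,dt=G(a_+-b_-)+G(a_--b_+)-G(a_+-b_+)-G(a_--b_-),
\]
so that differentiating at $\tau=0$ produces $\big(\sgn(a_++a_-)-\sgn(b_++b_-)\big)$ times the second difference $G'(a_+-b_+)-G'(a_+-b_-)-G'(a_--b_+)+G'(a_--b_-)$, which in turn equals $\int_{b_-}^{b_+}\big(K(a_--\beta)-K(a_+-\beta)\big)\,d\beta$. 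Using that $K$ is increasing in $|t|$ (and, in the overlapping case, the oddness of $G'$ together with the sign of $a_++a_-$), one checks that this product is $\le 0$ in every configuration, and that it vanishes whenever $\sgn(a_++a_-)=\sgn(b_++b_-)$. Summing over $i,j$ and integrating over the heights $h,u$ and the transverse variables $x',y'$ yields $\frac{d}{d\tau}\mathcal{I}[f^\tau]\big|_{\tau=0}\le 0$.

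For strictness, we use that $f$, being positive and not radially decreasing, admits a direction $e$ and a point $a$ so that, after centering at $a$, the super-level sets of $f$ in the direction $e$ are not all concentric; concretely, on a set of positive measure in the transverse variables and the heights, the corresponding slice intervals satisfy $\sgn(a_++a_-)\neq\sgn(b_++b_-)$ while the second-difference factor above stays bounded away from $0$. Taking $H$ to be the hyperplane through $a$ orthogonal to $e$, this gives $\frac{d}{d\tau}\mathcal{I}[f^\tau]\big|_{\tau=0}\le-2\gamma_0<0$ for some $\gamma_0=\gamma_0(W,f)>0$. Finally, by the semigroup property (Lemma~\ref{lem:semigroup}) the derivative at a time $\tau$ is the $\tau=0$ derivative for the function $f^\tau$; since the flow moves endpoints at unit speed with only finitely many interval merges on a bounded time interval, the \emph{misaligned} configuration persists for all $0\le\tau\le\tau_0$ with $\tau_0=\tau_0(f)>0$, and as $\tau\mapsto\mathcal{I}[f^\tau]$ is Lipschitz and piecewise $C^1$ we may integrate $\frac{d}{d\tau}\mathcal{I}[f^\tau]\le-\gamma$ on $[0,\tau_0]$ to obtain the claim with $\gamma=\gamma(W,f)>0$.

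The Fubini reduction and the interval-by-interval differentiation are routine. The delicate step is the strictness argument: turning the purely qualitative hypothesis that $f$ is not radially decreasing into a quantitative, direction- and translate-specific lower bound $\gamma$ on the rate of decrease, and checking that this bound does not degenerate as $\tau\to0^+$. This requires a structural description of which super-level sets fail to be concentric, together with a compactness argument to keep $\gamma$ uniform, and is where essentially all the difficulty lies.
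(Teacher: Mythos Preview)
The paper does not give its own proof of this proposition: it is stated as a citation of Proposition~2.15 in \cite{CHVY}, and the surrounding text explains only why it cannot be applied directly to the singular kernels of interest. So there is no proof in the paper to compare against; what you have written is essentially a sketch of the original CHVY argument, and indeed it matches the structure that the present paper later adapts in Proposition~\ref{prop:e-main2} and Lemma~\ref{lem:one-d}: Fubini reduction to one dimension, layer-cake in the heights, interval-by-interval differentiation via a second antiderivative of the kernel, and then the upgrade from an infinitesimal inequality to a linear bound via the semigroup property.

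Where your proposal is genuinely incomplete is exactly where you flag it. First, the choice of hyperplane is not arbitrary: CHVY fixes $H$ to bisect the mass (and the present paper replaces this by the nonlinear center-of-mass condition \eqref{eq:centerofmass}). This choice is what forces \emph{both} of the sets $B_+^a$ and $B_-^a$ in \eqref{eq:B-sets} to have positive measure, which is precisely what produces pairs of intervals with $\sgn(a_++a_-)\neq\sgn(b_++b_-)$ on a set of positive measure. Without specifying the centering, your claim that such misaligned pairs exist with a quantitative lower bound on the second difference is not justified. Second, your remark that ``the flow moves endpoints at unit speed with only finitely many interval merges on a bounded time interval'' is not correct for a general $f\in L^1$: level sets may have countably many components (Definition~\ref{defn:cont set}(3)), and the passage from finite unions to the general case is handled by monotone approximation, not by a finiteness-of-merges argument. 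These are the two places where your sketch would need the actual work of \cite{CHVY}*{Lemmas~2.16--2.17 and Proposition~2.15} to become a proof.
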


The original result in \cite{CHVY} allows for more singular kernels, but our prototype kernels $W(x)\approx -|x|^{-n-sp}$ are too singular to directly apply their result. 
In fact, for $\mathcal{I}(f)$ to be well-defined in our setting, one must replace $f(x)f(y)$ by $|f(x)-f(y)|^p$. 
To see this, consider the case $p=2$ and $W(x) = c_{n,s} |x|^{-2s}$ where $c_{n,s}>0$ is the normalizing constant for the fractional Laplacian.
Using the Fourier transform, we can formally write
\[
\mathcal{I}[f] = \int_{\R^n} |\widehat{f}(\xi)|^2 \widehat{W}(\xi) \, d \xi 
\]
but $\widehat{W}(\xi)$ is not defined. On the other hand, using the definition of $(-\Delta)^s$,  
\[
\frac12\int_{\R^n} \int_{\R^n} |f(x) - f(y)|^2 W(x-y) \, dy \, dx
    = \int_{\R^n}f(x)(-\Delta)^s f(x) \, dx 
    = \int_{\R^n}|\widehat{f}(\xi)|^2 |\xi|^{2s}\, d \xi 
\]
which is a well-defined seminorm.

We use an $\varepsilon$-regularization of $W$ for which for Proposition \ref{prop:pastresult} holds. 
For each $0 < \varepsilon \leq 1$, we consider the energy given by
\begin{equation}\label{eq:Fep}
\mathcal{F}_{\varepsilon}^p(f) = \int_{\R^n} \int_{\R^n} |f(x) - f(y)|^pW_{\varepsilon}(x-y) \, dx \, dy, \quad\quad W_{\varepsilon}(x):= \frac{1}{|x|^{n+sp}+\varepsilon}.
\end{equation}
Notice that kernel associated to $\mathcal{F}_{\varepsilon}^p$ is integrable for each fixed $\varepsilon>0$ and that
\[
\lim_{\varepsilon \to 0}\mathcal{F}_{\varepsilon}^p(f) = \sup_{0 < \varepsilon \leq 1} \mathcal{F}_{\varepsilon}^p(f)  = [f]_{W^{s,p}(\R^n)}^p.
\]
Using that $W_{\varepsilon}$ is radially symmetric, the energy $\mathcal{F}^p_\varepsilon(f)$ can be written as
\begin{align*}
\mathcal{F}_{\varepsilon}^p(f) 
    &= \int_{\R^{2n}} \big(|f(x) - f(y)|^p - |f(x)|^p - |f(y)|^p\big) W_{\varepsilon}(x-y) \, dx \, dy\\
    &\quad + \int_{\R^{2n}} \big( |f(x)|^p +|f(y)|^p\big) W_{\varepsilon}(x-y) \, dx \, dy\\
    &= \int_{\R^{2n}} \big(|f(x) - f(y)|^p - |f(x)|^p - |f(y)|^p\big) W_{\varepsilon}(x-y) \, dx \, dy + C_{\varepsilon} \|f\|_{L^p(\R^n)}^p
\end{align*}
where the constant $C_{\varepsilon} = C_{\varepsilon}(s,p)$ satisfies
\begin{equation}\label{eq:Ce}
C_{\varepsilon}  =  2 \int_{\R^n} W_{\varepsilon}(y) \, dy \to \infty \quad \hbox{as}~\varepsilon \to 0^+. 
\end{equation}
For convenience, we define 
\begin{equation}\label{eq:Iep}
\begin{aligned}
\mathcal{I}_{\varepsilon}^p(f) 
    &:= \mathcal{F}_{\varepsilon}^p(f) - C_{\varepsilon}\|f\|_{L^p(\R^n)}^p\\
    &= \int_{\R^{2n}} \big(|f(x) - f(y)|^p - |f(x)|^p - |f(y)|^p\big) W_{\varepsilon}(x-y) \, dx \, dy.
\end{aligned}
\end{equation}

Consider the continuous Steiner symmetrization $f^{\tau}$ of $f$. 
As a consequence of \eqref{eq:preserves Lp}, 
\begin{align*}
\frac{d}{d\tau}\big[\mathcal{F}_{\varepsilon}^p(f^\tau) \big]
    &= \frac{d}{d\tau} \big[ \mathcal{I}_{\varepsilon}^p(f^\tau)  + C_{\varepsilon} \|f\|_{L^p(\R^n)}^p\big]
    =   \frac{d}{d\tau} \big[\mathcal{I}_{\varepsilon}^p(f^\tau) \big].
\end{align*}
In the special case of $p=2$, the integrand in $\mathcal{I}_\varepsilon^2(f)$ simplifies nicely, and we get
\begin{align*}
\frac{d}{d\tau}\big[\mathcal{F}_{\varepsilon}^2(f^\tau) \big]
    &= \frac{d}{d\tau} \bigg[-2 \int_{\R^{2n}} f(x)f(y) W_{\varepsilon}(x-y) \, dx \, dy \bigg]
    =- 2\frac{d}{d\tau} \langle f^\tau, W_{\varepsilon}* f^\tau \rangle_{L^2(\R^n)}.
\end{align*}
Since $\tilde{W}_\eps:=-2W_{\varepsilon} \in C^1(\R^n)$ is symmetric and increasing along its rays, we apply Proposition \ref{prop:pastresult} to find constants $\gamma_{\varepsilon}, \tau_0>0$ such that
\[
 \langle f^\tau, \tilde{W}_\eps* f^\tau \rangle_{L^2(\R^n)} \leq \langle f, \tilde{W}_\eps* f \rangle_{L^2(\R^n)} - \gamma_{\varepsilon} \tau \quad \hbox{for all}~0 \leq \tau \leq \tau_0. 
\]
We will show that $\gamma_{\varepsilon}$ can be bounded uniformly from below in $0 < \varepsilon \leq 1$ and also that we can handle all $1 < p < \infty$. 
More precisely, we prove the following result.

\begin{prop}\label{prop:e-main2}
Let $0 < s < 1$, $1< p < \infty$, and $f \in L^1(\R^n)\cap C(\R^n)$ be nonnegative. Assume that, up to translation or rotation, the nonlinear center of mass is at the origin
\begin{equation}\label{eq:centerofmass}
    \int_{\R^n}\tan^{-1}(x_n)f(x)\;dx=0
\end{equation}
and that $f$ is not symmetric decreasing across the plane $\{x_n=0\}$. Then, there are constants $\gamma = \gamma(n,s,p,f), \tau_0 = \tau_0(f)>0$, independent of $\varepsilon$, such that
\[
\mathcal{I}_{\varepsilon}^p(f^\tau)
 	\leq \mathcal{I}_{\varepsilon}^p(f)- \gamma \tau \quad \hbox{for all}~ 0\leq \tau \leq \tau_0.
\]
\end{prop}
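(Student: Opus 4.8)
The plan is to reduce everything to a one-dimensional statement about characteristic functions of level sets, and then to exploit the sign structure of the integrand as in the sketch given in the introduction. First I would use the layer-cake representation of $f$ in the $e_n$ direction: writing $U_{x'}^h = \{x_n : f(x',x_n)>h\}$, Fubini allows one to express $\mathcal{I}_\varepsilon^p(f)$ as an integral over $x',y' \in \R^{n-1}$ and heights $h,u>0$ of a quantity built from the one-dimensional sets $U_{x'}^h$ and $U_{y'}^u$. Since $f^\tau$ acts on each such pair of one-dimensional level sets via the constant-speed continuous Steiner flow $M^\tau$, and since this flow preserves measures (hence the $L^p$ and $C_\varepsilon$ terms are untouched), it suffices to differentiate the one-dimensional kernel-integral at $\tau=0$ and show the resulting integrand is $\le 0$ pointwise, and strictly negative on a set of positive measure.

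Second, I would carry out the one-dimensional computation. For a pair of super-level sets, write them (when they are single intervals) as $(x_-,x_+)$ and $(y_-,y_+)$; the flow $\eqref{eq:ode}$ with $V(y,h)=\sgn(y)$ moves the endpoints by $\dot x_\pm = -\sgn(x_++x_-)$ and $\dot y_\pm = -\sgn(y_++y_-)$. Differentiating $\iint |\chi(x)-\chi(y)|^p W_\varepsilon(x-y)\,dx\,dy$ (which, because $\chi$ is $\{0,1\}$-valued, is $\iint_{(x\in A)\triangle(y\in B)} W_\varepsilon(x-y)$) produces boundary terms at the four endpoint-differences $x_\pm - y_\pm$, each weighted by the velocity of the corresponding endpoint and by $W_\varepsilon$ evaluated there; summing the four contributions and using antiderivatives of $W_\varepsilon$ gives exactly the bracketed expression displayed in the introduction, with $W_\varepsilon$ in place of the pure power and its primitive in place of $\sgn(t)/|t|^{2s}$. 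The crucial algebraic fact — which I would isolate as a lemma, the analogue of Lemma~\ref{lem:one-d} — is that
\[
(\sgn(x_++x_-)-\sgn(y_++y_-))\,\Phi_\varepsilon(x_\pm,y_\pm) \le 0,
\]
where $\Phi_\varepsilon$ is the second-difference of the primitive $\Psi_\varepsilon$ of $W_\varepsilon$ (so $\Phi_\varepsilon(x_\pm,y_\pm) = \Psi_\varepsilon(x_+-y_+)-\Psi_\varepsilon(x_+-y_-)-\Psi_\varepsilon(x_--y_+)+\Psi_\varepsilon(x_--y_-)$). This sign is forced by monotonicity: $\Psi_\varepsilon$ is increasing because $W_\varepsilon \ge 0$, and the mixed second difference of an increasing, suitably convex/concave primitive over two ordered intervals has a definite sign, so the product of "which interval is more off-center" with "the monotone kernel's cross term" is non-positive; the general case of finitely-many-interval level sets follows by additivity over pairs of constituent intervals, and the countable case by the monotone-convergence definition of $M^\tau$.

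Third, and this is where the $\varepsilon$-uniformity is won, I would extract a quantitative lower bound. Because $f$ is positive, continuous, $L^1$, and \emph{not} symmetric decreasing across $\{x_n=0\}$, there is a fixed height range and a fixed positive-measure set of $(x',y')$ on which the relevant level sets are genuinely off-center — quantitatively, $|x_++x_-|\ge \delta$ and $|y_++y_-|\ge \delta$ with opposite signs of the midpoints, with $\delta = \delta(f)>0$ — and on which the four endpoint separations stay in a fixed compact set bounded away from $0$ and $\infty$, again by continuity and integrability of $f$ and the normalization $\eqref{eq:centerofmass}$ that pins the nonlinear center of mass. On such a set, $\Phi_\varepsilon$ is bounded below by a positive constant \emph{uniformly in $\varepsilon \le 1$}, since $W_\varepsilon \ge \tfrac{1}{|x|^{n+sp}+1}$ there and all quantities are evaluated at separations in a fixed compact set away from the origin; integrating the pointwise bound over that set yields $\frac{d}{d\tau}\mathcal{I}_\varepsilon^p(f^\tau)\big|_{\tau=0} \le -2\gamma$ with $\gamma = \gamma(n,s,p,f)>0$ independent of $\varepsilon$. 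Finally, a uniform-in-$\varepsilon$ modulus-of-continuity argument for $\tau \mapsto \frac{d}{d\tau}\mathcal{I}_\varepsilon^p(f^\tau)$ on a short interval $[0,\tau_0]$ — using Lemma~\ref{lem:distance for fixed h} to control how far level sets move and the semigroup property to relate the derivative at time $\tau$ to the derivative at time $0$ for $f^\tau$ — upgrades the derivative bound to the integrated statement $\mathcal{I}_\varepsilon^p(f^\tau)\le \mathcal{I}_\varepsilon^p(f)-\gamma\tau$ for $0\le\tau\le\tau_0$.

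\textbf{Main obstacle.} I expect the delicate point to be the $\varepsilon$-uniform lower bound on $\gamma$: one must show that the "off-center mass" of $f$ that survives differentiation lives at separations bounded away from both $0$ and $\infty$, so that the degeneration $W_\varepsilon \to |x|^{-n-sp}$ (which blows up at the origin) and the tail decay (which makes $W_\varepsilon$ small at infinity) do not interfere. The continuity and $L^1$ hypotheses on $f$, together with the fact that $f$ is genuinely asymmetric (not merely non-symmetric on a null set), are exactly what is needed to produce a fixed, $\varepsilon$-independent "good region" of heights and base points; making this selection precise — and checking that the pointwise sign lemma really does degrade gracefully rather than vanish as intervals merge or as $h$ approaches the extremes of the range — is the technical heart of the argument. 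The passage from single intervals to finite unions (tracking merging times $\tau_1$ from Definition~\ref{defn:cont set}) is bookkeeping but must be done carefully to see that merging only helps (the energy has a downward corner there, or at worst is continuous).
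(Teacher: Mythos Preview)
Your approach is essentially the paper's: reduce via the layer-cake identity to a one-dimensional statement about pairs of level sets, establish a pointwise sign on the $\tau$-derivative (the paper's Lemma~\ref{lem:one-d}), and extract an $\varepsilon$-uniform quantitative lower bound on a ``good region'' of asymmetry. There is, however, one genuine gap. The layer-cake representation \eqref{eq:p-interaction} of $\mathcal{I}_\varepsilon^p$ carries the weight $|h-u|^{p-2}$ in the height variables, which you never mention. For $p>2$ this weight vanishes as $h\to u$, so merely restricting to ``a fixed height range'' does not give a positive lower bound: you must place the off-center-to-the-right level sets and the off-center-to-the-left ones in \emph{two separated} height ranges so that $|h-u|$ stays bounded below on the good region. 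This is exactly the content of the paper's Lemma~\ref{lem:h-separation}, a density-point argument applied separately to the sets $B_+^a$ and $B_-^a$; the nonlinear center-of-mass condition \eqref{eq:centerofmass} is what guarantees that both $B_+^a$ and $B_-^a$ have positive measure in the first place. Without this height separation your derivative bound collapses for $p>2$.

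A smaller imprecision: the $\varepsilon$-uniformity of the lower bound does not come from $W_\varepsilon \ge W_1$ as you write, but from $|K_\varepsilon'(r)| \ge |K_1'(r)|$ on the relevant compact interval (see \eqref{eq:K-eps-prime}). Your second difference $\Phi_\varepsilon$ is a double integral of $W_\varepsilon'$, not of $W_\varepsilon$, and it is the derivative of the kernel that enters the quantitative bound in Lemma~\ref{lem:one-d}. The monotonicity-in-$\varepsilon$ mechanism is the same in spirit, so this is easily repaired, but the reason you give is not the right one.
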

\begin{rem}
    In the original reference \cite{CHVY}, the condition of the nonlinear center of mass \eqref{eq:centerofmass} is replaced by the hyperplane $\{x_n=0\}$ dividing the mass in half. We impose condition \eqref{eq:centerofmass} to simplify some measure theoretical aspects of the proof. We chose the function $\tan^{-1}(x_n)$, because it is odd, strictly monotone, and bounded, which makes the integral well-defined under the assumption $f\in L^1(\R^n)$.
\end{rem}

Since $f^\tau$ is defined as an integral in terms of the one-dimensional level sets $U_{x'}^h \subset \R$ of $f$, 
roughly speaking, one can reduce the proof of Proposition \ref{prop:e-main2} to a one-dimensional setting.  
For a fixed $\ell > 0$, consider the one-dimensional kernel 
\[
K_{\varepsilon}(r) 
	=K_{\varepsilon,\ell}(r) 
	=\frac{1}{(\ell^2 +r^2)^{(n+sp)/2} + \varepsilon}
\quad \hbox{for}~0 < \varepsilon \leq 1,
\]
so that $W_{\varepsilon}(x',x_n) = K_{\varepsilon,|x'|}(x_n)$. 
With the following layer-cake-type representation
\begin{align*}
|f(x) - f(y)|^p
    - |f(x)|^p - |f(y)|^p
&= -p(p-1)\int_0^{\infty} \int_0^{\infty} |h-u|^{p-2} \chi_{U_{x'}^h}(x_n) \chi_{U_{y'}^h}(y_n) \, dh \, du,
\end{align*}
we write \eqref{eq:Iep} as 
\begin{equation}\label{eq:p-interaction}
\begin{aligned}
\mathcal{I}_{\varepsilon}^p(f)
    &= \int_{\R^{2n}} \big(|f(x',x_n) - f(y',y_n)|^p - |f(x',x_n)|^p - |f(y',y_n)|^p\big) K_{\varepsilon,|x'-y'|}(x_n-y_n)  \, dx \, dy\\
    &= -p(p-1)\int_{\R^{2n}}  \int_{\R^2_+}|h-u|^{p-2} \chi_{U_{x'}^h}(x_n) \chi_{U_{y'}^h}(y_n)
        K_{\varepsilon,|x'-y'|}(x_n-y_n)  \, dh \, du \, dx \, dy\\
    &= -p(p-1)\int_{\R^{2(n-1)}}  \int_{\R^2_+} |h-u|^{p-2}\\
    &\quad \int_{\R^{2}}\chi_{U_{x'}^h}(x_n) \chi_{U_{y'}^h}(y_n)
        K_{\varepsilon,|x'-y'|}(x_n-y_n)  \, dx_n \, dy_n \, dh \, du \, dx' \, dy'.
\end{aligned}
\end{equation}
Consequently,
\begin{align*}
\frac{d}{d\tau}\big[\mathcal{I}_{\varepsilon}^p(f^\tau)\big]
    &= -p(p-1)\int_{\R^{2(n-1)}}  \int_{\R^2_+} |h-u|^{p-2} \\
    &\quad \frac{d}{d\tau}\int_{\R^{2}}\big[\chi_{M^\tau(U_{x'}^h)}(x_n) \chi_{M^\tau(U_{y'}^h)}(y_n)
       K_{\varepsilon,|x'-y'|}(x_n-y_n) \big] \, dx_n \, dy_n \, dh \, du \, dx' \, dy'.
\end{align*}
Hence, to establish Proposition \ref{prop:e-main2}, we first study the corresponding  one-dimension problem. 

For open sets $U_1, U_2 \subset \R$, define 
\[
I_{\varepsilon}(\tau) = I_{\varepsilon}[U_1,U_2](\tau):= 
   \int_{\R^{2}}\chi_{M^\tau(U_1)}(x) \chi_{M^\tau(U_2)}(y)
        K_{\varepsilon}(x-y)  \, dx \, dy
\]
where $K_{\varepsilon} = K_{\varepsilon, \ell}$ for a fixed $\ell>0$. 
The main lemma of this section establishes that $I_{\varepsilon}(\tau)$ is strictly increasing in $\tau>0$ 
when $U_1$, $U_2$ are sufficiently separated. 

For a function $g = g(\tau)$, we denote the upper and lower Dini derivatives of $g$ respectively by 
\begin{equation}\label{eq:dini}
\frac{d^+}{d\tau} g(\tau) = \limsup_{\delta \to 0^+} \frac{g(\tau + \delta) - g(\tau)}{\delta}
\quad \hbox{and} 
\quad
\frac{d^-}{d\tau} g(\tau) = 
\limsup_{\delta \to 0^-} \frac{g(\tau + \delta) - g(\tau)}{\delta}.
\end{equation}

\begin{lem}\label{lem:one-d}
Let $U_1, U_2 \subset \R$ be open sets with finite measure. Then 
\begin{equation}\label{eq:1st part}
\frac{d^+}{d\tau} I_{\varepsilon}(\tau) \geq 0 \quad \hbox{for all}~\tau \geq 0.
\end{equation}
If in addition, there exist $0 < a < 1$ and $R> \max \{ \abs{U_1}, \abs{U_2}\}$ such that 
$\abs{U_1 \cap (\abs{U_1}/2,R)} >a$ and 
$\abs{U_2 \cap (-R,-\abs{U_2}/2)} >a$,
then
\begin{equation}\label{eq:2nd part}
\frac{d^+}{d\tau} I_{\varepsilon}(\tau) \geq \frac{1}{128} c a^3 >0 \quad \hbox{for all}~0 \leq \tau \leq \frac{a}{4}
\end{equation}
where 
\[
c = \min \{\abs{K_{1}'(r)} : r \in [a/4, 4R]\}.
\]
\end{lem}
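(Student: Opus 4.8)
The plan is to reduce everything to the case where $U_1$ and $U_2$ are finite unions of disjoint open intervals, prove a sharp statement there, and then pass to the limit using monotone convergence. Recall from Definition~\ref{defn:cont set} that $M^\tau$ on a countably infinite union is defined as a limit of $M^\tau$ on finite truncations, and that, by the layer-cake structure of $K_\varepsilon>0$ and Fatou/monotone convergence, $I_\varepsilon[U_1,U_2](\tau)$ is the limit of $I_\varepsilon$ evaluated on the truncations; moreover each truncation underestimates the limit. So it suffices to establish (\ref{eq:1st part}) and (\ref{eq:2nd part}) when $U_1=\bigcup_{i=1}^{m_1} I_i$ and $U_2=\bigcup_{j=1}^{m_2} J_j$ are finite unions, with constants independent of $m_1,m_2$; for the lower bound (\ref{eq:2nd part}) the separation hypotheses on $U_1,U_2$ are inherited (up to shrinking $a$ by an arbitrarily small amount) by sufficiently fine truncations, since $|U_1\cap(|U_1|/2,R)|$ and $|U_2\cap(-R,-|U_2|/2)|$ are approximated from below.

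For the finite-union case, write $I_\varepsilon(\tau)=\sum_{i,j} \int_{M^\tau(I_i)}\int_{M^\tau(J_j)} K_\varepsilon(x-y)\,dx\,dy$ and differentiate each summand. Away from merging times, each interval $M^\tau(I_i)=(a_i^\tau,b_i^\tau)$ evolves by the explicit ODE $\frac{d}{d\tau}a_i^\tau=\frac{d}{d\tau}b_i^\tau=-\sgn(a_i^\tau+b_i^\tau)$, so the derivative of the $(i,j)$ term is, by the fundamental theorem of calculus in the endpoints, a combination of the antiderivative $\Phi_\varepsilon$ of $K_\varepsilon$ (with $\Phi_\varepsilon'=K_\varepsilon$) evaluated at the four differences of endpoints, weighted by $\sgn(a_i^\tau+b_i^\tau)$ and $\sgn(a_j^\tau+b_j^\tau)$. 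One checks, using that $K_\varepsilon$ is even and strictly decreasing on $(0,\infty)$ (so $\Phi_\varepsilon$ is concave on $(0,\infty)$ and odd), that each such four-point expression has a definite sign: moving two intervals toward the origin can only increase their mutual $K_\varepsilon$-interaction. This is essentially the two-interval computation behind the formula displayed in the introduction for $\frac{d}{d\tau}[f^\tau]_{H^s}^2$; summing over $i,j$ gives $\frac{d^+}{d\tau}I_\varepsilon(\tau)\ge 0$, and at merging times the map $\tau\mapsto I_\varepsilon(\tau)$ is continuous (the sets vary continuously in measure and $K_\varepsilon$ is bounded), with merges only helping, which handles the Dini derivative over all $\tau\ge 0$.

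For the quantitative bound (\ref{eq:2nd part}), I would isolate the contribution of the "good mass": by hypothesis a set of measure $>a$ inside $U_1$ sits in $(|U_1|/2,R)$ and a set of measure $>a$ inside $U_2$ sits in $(-R,-|U_2|/2)$. For $0\le\tau\le a/4$, these portions have not yet crossed the origin and are being pushed toward it with speed exactly $1$ (the left one moving right, the right one moving left), so their separation is strictly decreasing at unit rate; all other pairwise interactions contribute nonnegatively by the first part. Quantifying the gain: the relevant endpoint differences lie in an interval of the form $[a/4,4R]$ (using $R>|U_1|,|U_2|$ and $\tau\le a/4$ to bound distances above and below), on which $|K_1'|\ge c>0$, hence $K_\varepsilon'$ is bounded below in absolute value there uniformly in $\varepsilon$ after rescaling. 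Integrating the sign-definite four-point expression against a slab of good mass of measure $>a$ in each variable, and accounting for the three factors of $a$ (two from the measures of good mass, one from the length $a/4$ of the time interval over which the strict monotonicity of $K$ is exploited) together with an explicit combinatorial constant, yields the stated $\frac{1}{128}ca^3$.

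The main obstacle I anticipate is the bookkeeping at and across the merging times in the finite-union case: once two intervals merge, the driving $\sgn$ of the merged interval need not be the "sum" of the old ones, and one must verify that merging never destroys the monotonicity of $I_\varepsilon$ nor the accrual of the quantitative gain on $[0,a/4]$. The clean way around this is to note that $I_\varepsilon(\tau)$ is continuous in $\tau$, that merging is a measure-preserving operation on the sets that weakly increases all $K_\varepsilon$-interactions (it replaces two intervals by their "merged" configuration, which is closer to symmetric), and that on each open interval of time between consecutive merges the smooth computation above applies; the good mass stays on the correct side of the origin throughout $[0,a/4]$ regardless of merges, so the quantitative estimate survives. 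A secondary technical point is justifying differentiation under the $dx'\,dy'\,dh\,du$ integrals when this one-dimensional lemma is fed back into Proposition~\ref{prop:e-main2}, but that is outside the scope of this lemma.
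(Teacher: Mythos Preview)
Your approach is essentially the same strategy as the paper's, in that both reduce to finite unions of intervals and exploit the radial monotonicity of $K_\varepsilon$. The paper's proof, however, is far shorter: it simply invokes \cite{CHVY}*{Lemmas 2.16 and 2.17}, which already establish (for any $C^1$ radial kernel decreasing on $(0,\infty)$) the interval-pair bound $\frac{d^+}{d\tau}I_\varepsilon(0)\ge d_\varepsilon\min\{r_1,r_2\}|c_2-c_1|$ and the passage to finite unions with the constant $\frac{1}{128}c_\varepsilon a^3$. The only new content the paper adds is the one-line observation
\[
|K_\varepsilon'(r)| = \frac{(n+sp)(\ell^2+r^2)^{(n+sp)/2-1}\,r}{((\ell^2+r^2)^{(n+sp)/2}+\varepsilon)^2} \ge |K_1'(r)| \quad \hbox{for all}~0<\varepsilon\le 1,
\]
which immediately gives $d_\varepsilon\ge d_1$ and $c_\varepsilon\ge c$, hence the uniformity in $\varepsilon$. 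Your phrase ``uniformly in $\varepsilon$ after rescaling'' is imprecise here; no rescaling is involved, only this monotonicity of $|K_\varepsilon'|$ in $\varepsilon$, and that is really the whole point of the lemma.

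One detail in your outline is off: you attribute one of the three factors of $a$ in $a^3$ to ``the length $a/4$ of the time interval over which the strict monotonicity of $K$ is exploited.'' But the bound \eqref{eq:2nd part} is on the derivative at each fixed $\tau\in[0,a/4]$, not on a time integral. In the \cite{CHVY} argument all three factors of $a$ are geometric at fixed $\tau$: in the two-interval case the bound is $d\min\{r_1,r_2\}|c_2-c_1|$, and once one restricts to the intervals carrying the good mass, the radii and center separation each contribute factors comparable to $a$, with a further factor from the summation over good-mass intervals. The restriction $\tau\le a/4$ serves only to keep the good mass on the correct side of the origin and the endpoint differences inside $[a/4,4R]$; it does not itself contribute a power of $a$.
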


To prove Lemma \ref{lem:one-d}, we apply Propositions 2.16 and 2.17 in \cite{CHVY} to $I_{\varepsilon}$ and show that the upper Dini derivative of $I_{\varepsilon}$ can be uniformly bounded below. 
For the sake of the reader, we first provide a brief, formal argument in the simplest setting. 
Indeed, if $U_i = [c_i-r_i,c_i+r_i]$, $i=1,2$, then we can write
\begin{align}
I_{\varepsilon}(\tau)
	&= \int_{-r_1+c_1 - \tau \sgn(c_1)}^{r_1+c_1 - \tau \sgn(c_1)} \int_{-r_2+c_2 - \tau \sgn(c_2)}^{r_2+c_2 - \tau \sgn(c_2)} K_{\varepsilon}(x-y) \, dy \, dx.
\end{align}
By the semigroup property, it is enough to take the derivative at $\tau=0$ and estimate
\begin{align}
\frac{d^+}{d\tau}I_{\varepsilon}(0)
	&=(\sgn(c_2) - \sgn(c_1)) \int_{-r_1}^{r_1} \int_{-r_2+c_2-c_1}^{r_2+c_2-c_1} K_{\varepsilon}'(x-y)  \, dy \, dx.
\end{align}
If $c_2 > c_1$, then $Q= [-r_1,r_1] \times [-r_2+c_2-c_1,r_2+c_2-c_1]$ is a rectangle in the $xy$ plane centered across $\{x=0\}$.  
Since $K_{\varepsilon}$ is increasing in $\{y>0\}$ and decreasing in $\{y<0\}$, one can show that $\frac{d^+}{d\tau}I_{\varepsilon}(0)>0$.
The more refined lower bound is roughly controlled by the size of the excess strip $[-r_1,r_1]\times[r_2+ (c_2-c_1)/2, r_2+c_2-c_1]$ and by $K_{\varepsilon}'$.

\begin{proof}[Proof of Lemma \ref{lem:one-d}]
First consider when $U_i = [c_i- r_i, c_i+r_i]$, $i=1,2$, are intervals. 
Then, following the proof of \cite{CHVY}*{Lemma 2.16}, we can show that
\[
\frac{d^+}{d\tau} I_{\varepsilon}(0) \geq d_{\varepsilon} \min\{r_1,r_2\} \abs{c_2-c_1}
\]
where
\[
d_{\varepsilon} = \min\left\{\abs{K_{\varepsilon}'(r)}: r \in [\abs{c_2-c_1}/{2}, r_1+r_2 + \abs{c_2-c_1}]\right\}.
\]
Since
\[
K_{\varepsilon}'(r)
    = - \frac{n+sp}{((\ell^2 + r^2)^{(n+sp)/2} +\varepsilon)^2}(\ell^2+r^2)^{(n+sp)/2-1} r
\]
for all $0 < \varepsilon \leq 1$, we have
\begin{equation}\label{eq:K-eps-prime}
\abs{K_{\varepsilon}'(r)}
	\geq \abs{K_1'(r)}.
\end{equation}
Hence, $d_{\varepsilon} \geq d_1$ and
\[
\frac{d^+}{d\tau} I_{\varepsilon}(0) \geq d_{1} \min\{r_1,r_2\} \abs{c_2-c_1} \quad \hbox{for all}~0 < \varepsilon \leq 1. 
\]

With this, we can follow the proof of \cite{CHVY}*{Lemma 2.17} for $U_1, U_2$ finite open sets to show 
\[
\frac{d^+}{d\tau} I_{\varepsilon}(\tau) \geq \frac{1}{128} c_{\varepsilon} a^3 >0 \quad \hbox{for all}~0 \leq \tau \leq \frac{a}{4}
\]
where 
\[
c_{\varepsilon} = \min \{\abs{K_{\varepsilon}'(r)} : r \in [a/4, 4R]\}
	\geq \min \{\abs{K_{1}'(r)} : r \in [a/4, 4R]\} = c. 
\]
\end{proof}

Before proceeding with the proof of Proposition \ref{prop:e-main2}, we will need the following technical lemma for the case $p \not= 2$. 

\begin{lem}\label{lem:h-separation}
Under the assumptions of Proposition~\ref{prop:e-main2},
for $a>0$ small and $R>0$ large, define the sets $B_+^a,B_-^a$ by
\begin{equation}\label{eq:B-sets}
\begin{aligned}
B_+^a &=  \{(x',h) \in \R^{n-1} \times (0,\infty) : |U_{x'}^h \cap (|U_{x'}^h|/2,R)|>a~\hbox{and}~|x'|, \, h \leq R \}\\
B_-^a &=\{(x',h) \in \R^{n-1} \times (0,\infty) : |U_{x'}^h \cap (-R, -|U_{x'}^h|/2)|>a~\hbox{and}~|x'|,\,h\leq R \}.
\end{aligned}
\end{equation}
If $B_+^a$ has positive measure, then there are heights $0<h_1<h_2<\infty$ such that
both
\[
B_+^a \cap \{ h<h_1\} \quad \hbox{and}\quad B_+^a \cap \{h>h_2\}
\]
have positive measure. Similarly for $B_-^a$. 
\end{lem}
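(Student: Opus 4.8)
The plan is to deduce the statement from one soft fact: $B_+^a$ is a measurable subset of $\R^{n-1}\times(0,\infty)$ of positive ($n$-dimensional) Lebesgue measure, and such a set cannot be concentrated on a single slice $\{h=h_*\}$. None of the structural hypotheses of Proposition~\ref{prop:e-main2} beyond $f\in L^1(\R^n)\cap C(\R^n)$ will be used.

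First I would record that $B_+^a$ is Lebesgue measurable, so that the hypothesis is meaningful. Since $f$ is continuous, $\{(x',x_n,h):f(x',x_n)>h\}$ is open, so by Tonelli the map $(x',h)\mapsto\abs{U_{x'}^h}$ is measurable; consequently so is
\[
(x',h)\longmapsto\Bigl|\,U_{x'}^h\cap\bigl(\tfrac{\abs{U_{x'}^h}}{2},R\bigr)\Bigr|=\int_{\R}\chi_{\{f(x',x_n)>h\}}\,\chi_{\{x_n>\abs{U_{x'}^h}/2\}}\,\chi_{\{x_n<R\}}\,dx_n,
\]
and $B_+^a$ is the intersection of a superlevel set of this function with $\{\abs{x'}\le R,\ h\le R\}$, hence measurable; moreover it has finite measure since it is bounded.

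Next I would push the restriction of Lebesgue measure to $B_+^a$ forward to the $h$-axis: set $\mu(A):=\abs{B_+^a\cap(\R^{n-1}\times A)}$ for Borel $A\subset(0,\infty)$, so that $t:=\mu((0,\infty))=\abs{B_+^a}>0$ by Fubini. The key point is that $\mu$ has no atoms: for each $h_*$ the slice $B_+^a\cap(\R^{n-1}\times\{h_*\})$ lies in a hyperplane and is therefore $n$-dimensional Lebesgue null, so $\mu(\{h_*\})=0$. Hence the distribution function $g(h):=\mu((0,h))$ is continuous and nondecreasing, with $g(0^+)=0$ and $g(h)\to t$ as $h\to\infty$. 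By the intermediate value theorem I would pick $h_1$ with $g(h_1)=t/3$ and then $h_2$ with $g(h_2)=2t/3$; monotonicity forces $0<h_1<h_2<\infty$, and then
\[
\abs{B_+^a\cap\{h<h_1\}}=\mu((0,h_1))=\tfrac{t}{3}>0,\qquad\abs{B_+^a\cap\{h>h_2\}}=\mu((h_2,\infty))=t-g(h_2)=\tfrac{t}{3}>0,
\]
using $\mu(\{h_2\})=0$ in the last equality. The statement for $B_-^a$ is proved verbatim after replacing $(\abs{U_{x'}^h}/2,R)$ by $(-R,-\abs{U_{x'}^h}/2)$ throughout.

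There is no substantial obstacle here; the only points worth a line of justification are the joint measurability of $(x',h)\mapsto\abs{U_{x'}^h\cap(\abs{U_{x'}^h}/2,R)}$, which is where continuity of $f$ enters, and the nonatomicity of $\mu$, which is simply the fact that hyperplanes are Lebesgue null. The hypotheses that the nonlinear center of mass vanishes and that $f$ is not symmetric decreasing are not needed for this lemma; they enter the proof of Proposition~\ref{prop:e-main2} only to ensure that $B_+^a$ or $B_-^a$ is nonnegligible for suitable $a$ and $R$.
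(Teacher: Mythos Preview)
Your proof is correct and takes a somewhat different route from the paper. The paper argues by choosing a Lebesgue density point $(x',h)$ of $B_+^a$, then observing that for small enough $\delta$ the set $B_+^a$ has positive measure both inside the thin slab $\{|u-h|<\delta/2\}$ (by density) and outside the slab $\{|u-h|<\delta\}$ (since $B_+^a$ has finite positive measure), and concludes by taking $h_1,h_2$ to be appropriate endpoints of these slabs. Your argument instead pushes the measure forward to the $h$-axis, notes it is nonatomic because hyperplanes are Lebesgue null, and applies the intermediate value theorem to the continuous distribution function to split the mass into thirds.

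Both arguments exploit the same underlying fact---a positive-measure set in $\R^{n-1}\times(0,\infty)$ cannot be concentrated on a single height---but yours is more elementary: it avoids the Lebesgue density theorem entirely and yields explicit heights $h_1,h_2$ with equal mass on either side. The paper's approach is a touch more local in flavor (it pins down a neighborhood of a specific density point), which is not needed for the application in Proposition~\ref{prop:e-main2}. Your closing remark that the center-of-mass and asymmetry hypotheses are not used in this lemma is also correct; those hypotheses are invoked only later to guarantee that $B_+^a$ and $B_-^a$ both have positive measure.
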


\begin{proof}
Consider a density point $(x',h)\in B_+^a$ and the rectangles
$$
\operatorname{Rec}_\delta =\{(y',u): |y'|<R,\; |u-h|<\delta\}.
$$
Note that $|\operatorname{Rec}_\delta|=\omega_n \delta R^{n-1}$ where $\omega_n$ volume of the unit ball in $\R^{n-1}$.  
By density, 
$$
|B_+^a\cap \operatorname{Rec}_\delta|>0 \quad \hbox{for every}~\delta>0.
$$
Let $\delta\ll 1$ be small enough to guarantee 
$$
|B_+^a\cap (\operatorname{Rec}_\delta)^c|>0.
$$
For such a small $\delta$, it follows that
$$
\min(|B_+^a\cap \operatorname{Rec}_{\delta/2}|,|B_+^a\cap (\operatorname{Rec}_\delta)^c|)>0.
$$
The lemma holds by choosing $h_1=h'-\delta$, $h_2=h'-\delta/2$ or  $h_1=h'+\delta/2$, $h_2=h'+\delta$.
\end{proof}

\begin{proof}[Proof of Proposition \ref{prop:e-main2}]
The proof follows along the same lines as the proof of \cite{CHVY}*{Proposition 2.15}. 
We will sketch the idea in order to showcase where we need Lemma \ref{lem:h-separation} and where we apply the estimates in Lemma \ref{lem:one-d} which we have already established to be independent of $0 < \varepsilon \leq 1$.

First, since $f$ is not symmetric across $H = \{x_n =0\}$,  
there exist $a>0$ small and $R>0$ large enough to guarantee that at least one of the sets $B_+^a, B_-^a$ defined in \eqref{eq:B-sets} has positive measure. Due to the nonlinear center of mass condition \eqref{eq:centerofmass}, we know that both of them need to have positive measure.

As a consequence of Lemma \ref{lem:h-separation}, there exist $0 < h_1 < h_2 < \infty$ and $0 < u_1 < u_2 < \infty$ such that the sets
\[
B_+^a \cap \{ h<h_1\}, \quad B_+^a \cap \{ h>h_2\}, \quad B_-^a \cap \{ u < u_1\}, \quad
B_-^a \cap \{ u > u_2\}
\]
all have positive measure. Without loss of generality, assume that $u_1 < h_2$. Otherwise $h_1<u_2$ and the proof is analogous. 

Next, we use  \eqref{eq:p-interaction} and the definitions of $f^\tau$ and $I_{\varepsilon}^p[U_{x'}^h, U_{y'}^u] (\tau)$ to write
\[
\mathcal{I}_{\varepsilon}^p(f^\tau)
    = -p(p-1) \int_{\R^{2(n-1)}}\int_{\R_+^2} |h-u|^{p-2}I_{\varepsilon}^p[U_{x'}^h, U_{y'}^u] (\tau) \, dh \, du \, dx' \, dy'.
\]
Using \eqref{eq:1st part}, we can estimate
\begin{align*}
- \frac{d^+}{d\tau}[\mathcal{I}_{\varepsilon}^p(f^\tau)]
    &\geq p(p-1) \int_{B_-^a \cap \{ u < u_1\}} \int_{B_+^a \cap \{h>h_2\}}  |h-u|^{p-2}\frac{d}{d\tau}[I_{\varepsilon}^p[U_{x'}^h, U_{y'}^u] (\tau)] \, dh \, dx'\, du  \, dy'\\
     &\geq m_{p,f} \int_{B_-^a \cap \{ u < u_1\}} \int_{B_+^a \cap \{h>h_2\}} \frac{d}{d\tau}[I_{\varepsilon}^p[U_{x'}^h, U_{y'}^u] (\tau)] \, dh \, dx'\, du  \, dy'
\end{align*}
where
\[
m_{p,f} := p(p-1)
\begin{cases}
|h_2-u_1|^{p-2} & \hbox{if}~p>2 \\
1 & \hbox{if}~p=2\\
(2R)^{p-2} & \hbox{if}~1<p<2. 
\end{cases}
\]
Applying now \eqref{eq:2nd part} and following the proof of \cite{CHVY}*{Proposition 2.15}, we obtain
\begin{align*}
- \frac{d^+}{d\tau}\mathcal{I}_{\varepsilon}^p(f^\tau)\bigg|_{\tau=0}
    &\geq \frac{m_{p,f}}{6000}  |B_+^a \cap  \{h>h_2\}||B_-^a \cap \{ u < u_1\}| \min_{r \in [a/4,4R]} |W'_1(r)| a^4 
    >0
\end{align*}
for all $0 \leq \tau \leq a/4$ where, with an abuse of notation, $W_1(r)$ is such that $W_1(|x|):= W_1(x)$. 
\end{proof}

We conclude this section with the proofs of Theorem \ref{thm:main2} and Corollary \ref{cor:Lip}.

\begin{proof}[Proof of Theorem \ref{thm:main2}]
Without loss of generality, assume that $H = \{x_n = 0\}$ coincides with the nonlinear center of mass condition and that $f$ is not symmetric decreasing across $H$.
For $0 < \varepsilon \leq 1$, let $\mathcal{F}_{\varepsilon}^p$ and $\mathcal{I}_{\varepsilon}^p$ be as in \eqref{eq:Fep} and  \eqref{eq:Iep}, respectively.
Using \eqref{eq:preserves Lp}, we have 
\begin{align*}
\mathcal{F}_\varepsilon^p(f^\tau) 
	&= C_{\varepsilon} \|f\|_{L^p(\R^n)}^p + \mathcal{I}_{\varepsilon}^p(f^\tau) \\
	&=\mathcal{F}_\varepsilon^p(f) + \mathcal{I}_{\varepsilon}^p(f^\tau)-\mathcal{I}_{\varepsilon}^p(f).
\end{align*}
By Proposition \ref{prop:e-main2}, there are constants $\gamma = \gamma(n,s,p,f), \tau_0 = \tau_0(f)>0$, independent of $\varepsilon$, such that
\[
\mathcal{F}_\varepsilon^p(f^\tau) 
	\leq \mathcal{F}_\varepsilon^p(f)  - \gamma \tau \quad \hbox{for all}~0 \leq \tau \leq \tau_0.
\]
The statement follows by taking $\varepsilon \to 0^+$. 
\end{proof}

\begin{rem}\label{rem:gamma-limit}
Notice from the proof of Proposition \ref{prop:e-main2} that
\[
\lim_{ s \to1^-}\min_{r \in [a/4,4R]} |W'_1(r)|
    =\min_{r \in [a/4,4R]} \frac{(n+p)r^{n+p-1}}{(r^{n+p}+1)^2} >0
\]
and also 
\[
\lim_{ s \to 0^+}\min_{r \in [a/4,4R]} |W'_1(r)|
    =\min_{r \in [a/4,4R]} \frac{nr^{n-1}}{(r^{n}+1)^2} >0.
\]
Therefore, we have
\[
\lim_{ s \to1^-} \gamma(n,s,p,f)>0 \quad \hbox{and} \quad \lim_{ s \to 0^+} \gamma(n,s,p,f)>0.
\]
and we have $s(1-s) \gamma \to 0$ as $s \to 0^+, 1^-$. 
After multiplying both sides of \eqref{eq:main estimate} by $s(1-s)$ and taking the limit as $s \to 1^-$, we obtain Corollary \ref{cor:Lip}. 
Note that if we instead take $s \to 0^+$, we do not contradict \eqref{eq:preserves Lp}. 
\end{rem}

\section{Explicit representations for good functions}\label{sec:good}

Here, we define and establish preliminary results for good functions, then we prove an explicit version of Theorem \ref{thm:main2} for good functions. 

\subsection{Good functions and local energies}

We begin by presenting the definition of \textit{good functions} and highlight their uses, which can be found in Brock's work \cites{Brock,Brock95}. 
As Definition \ref{defn:cont set} is broken down into cases for which the open set $U \subset \R$ is either an interval, a finite union of intervals, or an infinite union of intervals. 
Good functions are those functions whose sections $U_{x'}^h$ are a finite union of intervals which allows for explicit computation of both the fractional energy for $f^\tau$ and its derivative in $\tau$. 

\begin{defn}\label{defn:good}
A nonnegative, piecewise smooth function $f = f(x',x_n)$, $x' \in \R^{n-1}$, $x_n \in \R$, with compact support is called a \emph{good function} if 
\begin{enumerate}
\item for every $x' \in \R^{n-1}$ and every $h>0$ except a finite set, the equation $f(x',x_n) = h$ has exactly $2m$ solutions, denoted by $x_n = x_n^{\ell}(x,h)$, satisfying $x_n^{\ell}< x_n^{\ell+1},$ $\ell=1,\dots,2m$ where $m = m(x',h) < \infty$, and
\item $\displaystyle{\inf\left\{\abs{\frac{\partial f}{\partial x_n}(x',x_n)}: x' \in \R^{n-1},~x_n \in \R,~\hbox{and}~\frac{\partial f}{\partial x_n}(x',x_n)~\hbox{exists}\right\}>0}$.
\end{enumerate}
\end{defn}

The functions illustrated in Figure \ref{fig:symmetrization} and below in Figure \ref{fig:local-ex} are good functions. In general, one might think of good functions as a collection of peaks, creating a mountain range. 

\begin{notation}\label{notation:xs}
We denote the solutions $x_n$ to $f(x',x_n) = h$ using subscript notation $x_{2k-1}< x_{2k}$ for $k=1,\dots, m= m_h$. 
(This is not to be confused with the subscripts in $x' = (x_1,\dots, x_{n-1})$.)
In the case of $m=1$ or when considering an arbitrary interval $(x_{2k-1}, x_{2k})$, we will commonly adopt the notation $x_+:= x_{2k}$ and $x_-:= x_{2k-1}$. 
We will also denote solutions $y_n$ to $f(y',y_n) = u$ by $y_{2\ell-1}< y_{2\ell}$ for $\ell = 1,\dots, m = m_u$. 
\end{notation}

\begin{rem}\label{rem:good for symmetric}
If $f$ is a good function that is symmetric and decreasing across $\{x_n=0\}$, then it must be that $m=1$ and $x_-=-x_+$ for all $0 < h < \|f\|_{L^{\infty}(\R^n)}$. 
\end{rem}

Just as finite union of intervals can be used to approximate open sets in $\R$, good functions can be used to approximate Sobolev functions.  

\begin{lem}[See \cite{Brock}]
\label{lem:density}
\mbox{}
\begin{enumerate}
\item Good functions are dense in $W^{1,p}_+(\R^n)$ for every $1 \leq p < \infty$.
\item If $f$ is a good function, then $f^{\tau}$ is a good function for $0 \leq \tau \leq \infty$.
\end{enumerate}
\end{lem}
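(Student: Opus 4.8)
The plan is to prove the two claims of Lemma~\ref{lem:density} separately, treating density first and then the stability of the good-function class under $f \mapsto f^\tau$.

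\textbf{Part (1): Density in $W^{1,p}_+(\R^n)$.} The strategy is a two-step approximation. First, given $f \in W^{1,p}_+(\R^n)$, mollify and truncate to reduce to the case where $f$ is smooth, nonnegative, and compactly supported; standard convolution arguments show such functions are dense in $W^{1,p}_+$. So assume $f \in C_c^\infty(\R^n)$, $f \geq 0$. The remaining issue is that a generic smooth $f$ may have level sets $U_{x'}^h$ that are infinite unions of intervals, or critical points where $\partial f/\partial x_n = 0$. To fix the first issue, I would perturb $f$ so that, as a function of $x_n$ for each fixed $x'$, it is piecewise monotone with uniformly finitely many monotonicity intervals --- e.g. by precomposing with or adding a suitable piecewise-linear/polynomial profile, or by a partition-of-unity construction that glues together finitely many single-peak bumps whose heights and supports are chosen to $W^{1,p}$-approximate $f$. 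To fix the second issue (condition (2) of Definition~\ref{defn:good}), note that condition (2) asks for a \emph{positive lower bound} on $|\partial f/\partial x_n|$ wherever it exists, so one works with piecewise smooth $f$ and simply adds, on each monotonicity piece, a small sawtooth/kink so that the one-sided $x_n$-derivatives are bounded away from $0$; since the function is only required to be piecewise smooth, corners are allowed, and the $W^{1,p}$ cost of such a correction can be made arbitrarily small. Combining these perturbations produces a good function within $\eps$ of $f$ in $W^{1,p}$.

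\textbf{Part (2): $f$ good $\Rightarrow$ $f^\tau$ good.} Here I would argue directly from Definitions~\ref{defn:cont set} and~\ref{defn:cont-symm}. Fix $x'$; the section $U_{x'}^h$ is a finite union of open intervals for all but finitely many $h$, and $M^\tau$ acts on such a finite union by translating each interval at unit speed toward the origin and merging intervals when they collide (Definition~\ref{defn:cont set}(2)). Crucially, merging only \emph{decreases} the number of intervals, so $M^\tau(U_{x'}^h)$ is again a finite union of intervals, with at most as many components as $U_{x'}^h$; this gives condition (1) of Definition~\ref{defn:good}, with the bound $m(x',h,\tau) \leq m(x',h)$. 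For condition (2): by Lemma~\ref{lem:distance for fixed h} and the semigroup property (Lemma~\ref{lem:semigroup}), boundaries of super-level sets move at speed at most $1$ in $\tau$, and the endpoints of each interval of $U_{x'}^h$ translate rigidly (between collision times), so the height-to-width correspondence $h \mapsto |M^\tau(U_{x'}^h)|$ has the same one-sided slopes as that of $f$ away from merge times; hence the infimum of $|\partial f^\tau/\partial x_n|$ over points of differentiability is bounded below by that of $f$. Compact support of $f^\tau$ is immediate since each level set stays within a fixed bounded region (endpoints only move toward the origin). Piecewise smoothness of $f^\tau$ follows because, between consecutive merge heights/times, $f^\tau$ is obtained from $f$ by a smooth reparametrization of each monotone branch.

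\textbf{Main obstacle.} I expect the genuine technical work to be in Part (1), specifically the construction ensuring \emph{finitely many} monotonicity intervals per section $x'$ uniformly --- a careless mollification does not achieve this, and one must be somewhat deliberate (e.g. approximating the graph region of $f$ by a finite union of ``good'' subgraphs, or using a piecewise-polynomial approximation in the $x_n$ variable with control on the number of sign changes of the derivative). Part (2) is essentially bookkeeping once one observes that the merging rule can only reduce the number of intervals, but the one subtlety worth checking carefully is that condition (2) is not destroyed at the discrete set of merge times, which is handled by noting those times form a finite (or at worst countable, locally finite) set for each $x'$ and that ``piecewise smooth'' tolerates them. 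Since the lemma is attributed to Brock, I would in practice cite \cite{Brock} for the full details and only indicate these points.
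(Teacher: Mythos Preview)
The paper does not give its own proof of this lemma: it is stated with the attribution ``See \cite{Brock}'' and no argument is supplied. Your proposal therefore goes beyond what the paper does, and your final remark --- that in practice you would cite \cite{Brock} --- is exactly what the paper does.

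That said, your sketch is reasonable and correctly identifies the main issues. One small correction in Part~(2): you phrase the preservation of condition~(2) of Definition~\ref{defn:good} in terms of ``the height-to-width correspondence $h \mapsto |M^\tau(U_{x'}^h)|$,'' but the relevant quantity is $\partial x_\pm^\tau/\partial h$ for each endpoint separately, since $\partial f^\tau/\partial x_n = (\partial x_\pm^\tau/\partial h)^{-1}$. Between merge times and away from the (locally finite) set of heights where either the center of an interval crosses zero or $|c(h)|=\tau$, each endpoint is a rigid translate of the corresponding endpoint of $f$, so the $h$-derivatives of the endpoints --- and hence the $x_n$-derivatives of $f^\tau$ --- agree with those of $f$ where defined. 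This is what you intend, but the width $|M^\tau(U_{x'}^h)|$ is always preserved exactly and carries less information.
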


Good functions are a powerful tool for continuous Steiner symmetrizations as they allow us to take the $\tau$-derivative directly and expose a quantification of asymmetry. 
Even in the local setting (see Corollary \ref{cor:Lip}), we can explicitly estimate 
the derivative in $\tau$ of $\|f^\tau\|_{W^{1,p}(\R^n)}^p$ when $f$ is a good function.
This is in contrast to Brock's original approach in \cite{Brock} which relies on convexity to estimate the difference in norms of $f$ and $f^\tau$. 

We present the following discussion
for the interested reader to showcase the convenience of using good functions in computations. 

Since the sign function is not differentiable at the origin, fix $\varepsilon>0$ and 
let $\delta_\varepsilon$ be an $\varepsilon$-regularization of the usual Dirac delta
$$
\delta_\varepsilon(x)=\begin{cases}
    \frac{1}{2\varepsilon}&\mbox{if $|x|<\varepsilon$}\\
    0&\mbox{if $|x|>\varepsilon$},
\end{cases}
$$
and we consider $V_\varepsilon$ in \eqref{eq:ode} such that $V_\eps' = 2 \delta_\eps$. 
Let $M^{\tau,\eps}(U)$ denote the continuous Steiner symmetrization of an open set $U \subset \R$ with speed $V_\eps$ and 
$f^{\tau,\eps}$ denote the corresponding continuous Steiner symmetrization of $f$.
As the regularization parameter $\eps \to 0^+$, we recover the original rearrangement:
\begin{lem}
Let $f:\R^n \to \R$ be a good function.  
For any continuous function $g:\R^n \to \R$, it holds that
    $$
        \lim_{\varepsilon\to 0^+}\int_{\R^n} f^{\tau,\varepsilon}g\;dx=\int_{\R^n} f^{\tau}g\;dx.
    $$
\end{lem}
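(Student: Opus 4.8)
The plan is to show pointwise convergence $f^{\tau,\varepsilon}(x) \to f^\tau(x)$ for a.e.\ $x \in \R^n$, then upgrade to the integral statement via dominated convergence. Since $f$ is a good function, for each $x' \in \R^{n-1}$ and all but finitely many $h > 0$, the section $U_{x'}^h$ is a finite union of disjoint open intervals, so it suffices to understand how the endpoints of each such interval move under the two speeds. For a single interval $(y_-,y_+)$ with center $c = (y_-+y_+)/2 \neq 0$, the original rearrangement \eqref{eq:1D-defn}--\eqref{eq:ode} translates it rigidly toward the origin with unit speed until its center hits $0$; the regularized speed $V_\varepsilon$ (with $V_\varepsilon' = 2\delta_\varepsilon$, so $V_\varepsilon(y) \to \sgn(y)$ pointwise as $\varepsilon \to 0^+$) solves the ODE $\frac{d}{d\tau}[y_\pm^{\tau,\varepsilon}] = -V_\varepsilon(y_+^{\tau,\varepsilon}+y_-^{\tau,\varepsilon})$. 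By continuous dependence of ODE solutions on the vector field (or simply by noting that $V_\varepsilon$ agrees with $\sgn$ outside $(-\varepsilon,\varepsilon)$ and that the trajectory only spends a time $O(\varepsilon)$ inside that band before stabilizing at the centered interval), the endpoints $y_\pm^{\tau,\varepsilon}$ converge to $y_\pm^\tau$ as $\varepsilon \to 0^+$, uniformly in $\tau$ on compact sets. The same comparison handles the merging times in Definition \ref{defn:cont set}(2): the (finitely many) times at which two intervals of $M^{\tau,\varepsilon}(U_{x'}^h)$ touch converge to the corresponding merging times for $M^\tau$, and after each merge one repeats the argument on the merged interval.

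Next I would assemble this into convergence of the symmetrized function. By Definition \ref{defn:cont-symm}, $f^{\tau,\varepsilon}(x',x_n) = \int_0^\infty \chi_{M^{\tau,\varepsilon}(U_{x'}^h)}(x_n)\,dh$ and similarly for $f^\tau$. Fix $x = (x',x_n)$ not lying on the (measure-zero) set where some level-set boundary $\partial M^\tau(U_{x'}^h)$ passes through $x_n$ at a merging height, or where $x'$ is one of the finitely many exceptional values. For such $x$ and each non-exceptional height $h$, the convergence of endpoints gives $\chi_{M^{\tau,\varepsilon}(U_{x'}^h)}(x_n) \to \chi_{M^{\tau}(U_{x'}^h)}(x_n)$ for all $h$ outside a null set (namely the two heights $h$ with $x_n \in \partial M^\tau(U_{x'}^h)$). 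Since $f$ has compact support, the $h$-integral is over a bounded interval $[0,\|f\|_\infty]$ and the integrands are bounded by $1$, so dominated convergence in $h$ yields $f^{\tau,\varepsilon}(x) \to f^\tau(x)$. Finally, $\|f^{\tau,\varepsilon}\|_{L^\infty} = \|f\|_{L^\infty}$ by the layer-cake representation and, because endpoints only move a bounded amount, all the functions $f^{\tau,\varepsilon}$ (for $\varepsilon \le 1$, say) are supported in a common compact set $\overline{B_{\rho}}$ with $\rho = \rho(f,\tau)$; hence $|f^{\tau,\varepsilon} g| \le \|f\|_\infty \|g\|_{L^\infty(\overline{B_\rho})} \chi_{\overline{B_\rho}} \in L^1(\R^n)$, and dominated convergence gives $\int f^{\tau,\varepsilon} g \to \int f^\tau g$.

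The main obstacle is the bookkeeping of merging times in the finite-union case: one must argue that, as $\varepsilon \to 0^+$, the regularized flow produces the same combinatorial sequence of merges as the limiting flow (for all but finitely many heights $h$), and that the merge times converge. This is where the good-function hypothesis is essential — the uniform lower bound $\inf |\partial f/\partial x_n| > 0$ in Definition \ref{defn:good}(2) and the finiteness of $m(x',h)$ prevent pathological accumulation of intervals or level sets collapsing, so only finitely many merges occur per section and the endpoint ODEs are uniformly well-posed. A clean way to present this is to prove the endpoint convergence for one interval first (a one-variable ODE comparison), then induct on the number $m$ of intervals in a section, using the semigroup property (Lemma \ref{lem:semigroup}, which also holds for the speed $V_\varepsilon$ by the remark following \eqref{eq:ode}) to restart the argument at each merge time.
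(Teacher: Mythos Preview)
Your approach via pointwise endpoint convergence plus two applications of dominated convergence is sound, but the paper proceeds differently. Rather than tracking individual endpoints and merge times, the paper establishes a uniform quantitative bound on the symmetric difference: for any open $U \subset \R$ that is a union of $r$ disjoint intervals, one has $|M^{\tau,\varepsilon}(U) \triangle M^{\tau}(U)| \leq (r+3)\varepsilon$ for all $\tau \geq 0$. This is obtained by building an auxiliary family of sets---enlarging and re-centering any interval whose center lies within $\varepsilon$ of the origin, at a cost of at most $\varepsilon$ in measure per step---that simultaneously contains both $M^\tau(U)$ and $M^{\tau,\varepsilon}(U)$. With that bound in hand, the integral statement follows in one line from the layer-cake representation, and the argument closes by approximating a general good function by ones with a uniform bound on the number of intervals per section.

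The main payoff of the paper's route is that it entirely sidesteps the merge-time bookkeeping you flag as the chief obstacle: the symmetric-difference bound is uniform in $\tau$ regardless of how many merges occur or in what order, so no induction on merges is needed. Your inductive strategy does work, but one must formulate the induction hypothesis carefully enough to survive configurations where the $\sgn$-flow and the $V_\varepsilon$-flow produce merges in different orders (e.g.\ near a simultaneous merge in the limiting flow); these are null events in $h$ and ultimately harmless, but they complicate the write-up. In short, yours is the natural ODE-stability argument; the paper's is a measure-theoretic shortcut tailored to the piecewise-constant speed. One small correction: the exceptional set in Definition~\ref{defn:good} is a finite set of \emph{heights} for each fixed $x'$, not a finite set of $x'$ values, so your exclusion should be phrased as ``for each $x'$, all but finitely many $h$'' rather than excluding finitely many $x'$.
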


\begin{proof}
\noindent \textit{Step 1.} We consider $U=\bigcup_{i=1}^r(x_{2i-1},x_{2i})$ an open set which is the union of $r\in\N$ intervals. Then, we can show that the bound
\begin{equation}\label{eq:eps-claim}
|M^{\tau,\eps}(U) 
\triangle M^{\tau}(U)| \leq (r+3) \eps \quad \hbox{for all}~\tau\ge 0.
\end{equation}

\hspace{0.3cm}

\noindent\textit{Proof of Step 1.} We start by perturbing the set $U$ by considering $U^\eps=U\cup(-\eps,\eps)=\bigcup_{j=1}^s(x^\eps_{2i-1},x^\eps_{2i})$, with $s\le r+1$. As we added the set $(-\eps,\eps)$, there can exist utmost one interval satisfying $I_{i_0}=(x^\eps_{2i_0-1},x^\eps_{2i_0})$ satisfying $|x^\eps_{2i_0-1}+x^\eps_{2i_0}|\le \eps$. 
We expand this interval to center it, namely we consider the new interval $\tilde{I}_{i_0}=(\tilde{x}^\eps_{2i_0-1},\tilde{x}^\eps_{2i_0})=I_{i_0}\cup P$ such that $|P|\le \eps$ and $|\tilde{x}^\eps_{2i_0-1}-\tilde{x}^\eps_{2i_0}|=0$. If there are any new non-trivial intersections with $\tilde{I}_{i_0}$, we re-label the intervals and repeat the process. This procedure finishes in $k_0$-steps with $1\le k_0\le r+1$, where $r$ is the original number of intervals. Hence, we constructed an open set $S(U^\eps)$, that satisfies $U^\eps \subset S(U^\eps)$ with utmost $r-k_0+2$ disjoint intervals and $|U^\eps \triangle S({U}^\eps)|\le k_0\eps$. The advantage of the set $S(U^\eps)$ is that the rearrangements coincide
$$
M^{\tau,\eps}(S(U^\eps))=M^{\tau}(S(U^\eps))\qquad \hbox{for all}~
\tau\le \tau_1,
$$
where $\tau_1$ is larger than the the first time when two intervals meet. At $\tau_1$, when there exist an interval $I_{i_1}=(x^\eps_{2i_1-1},x^\eps_{2i_1})$ satisfying $|x^\eps_{2i_1-1}+x^\eps_{2i_1}|\le \eps$, we repeat the procedure of enlargement and centering from before. This process ends in $k_1$ steps with $1\le k_1\le n-k_0+1$ steps, and producing a set new set $S(M^{\tau_1}(S(U^\eps))$ that satisfies $M^{\tau_1}(S(U^\eps)) \subset S(M^{\tau_1}(S(U^\eps)))$ with utmost $r-k_0-k_1+2$ disjoint intervals and $|M^{\tau_1}(S(U^\eps)) \triangle S(M^{\tau_1}(S(U^\eps)))|\le k_1\eps$. Again, the set $S(M^{\tau_1}(S(U^\eps)))$ satisfies that the re-arrangements coincide
$$
M^{\tau,\eps}(S(M^{\tau_1}(S(U^\eps)))=M^{\tau}(S(M^{\tau_1}(S(U^\eps)))\qquad \hbox{for all}~
\tau\le \tau_2,
$$
where $\tau_2$ is larger than the time when two intervals meet. Continuing this process inductively, we can produce a \textit{discontinuous} family of open set $\{U_\tau^\eps\}_{\tau>0}$. Using that the rearragements preserve containment, we can obtain 
$$
M^\tau(U)\cup M^{\tau,\eps}(U)\subset U_\tau^\eps\qquad
\hbox{for all}~\tau>0.
$$ 
This process adds utmost $r+3$
intervals of size $\varepsilon$. 
Hence,
$$
|M^\tau(U)\triangle M^{\tau,\eps}(U)|\le | M^\tau(U)\triangle U_\tau^\eps|\le (r+3)\eps.
$$

\hspace{0.3cm}

\noindent\textit{Step 2.} We show that for any good function $f_r$ 
satisfying that $U^h_{x'}$ has utmost $r\in\N$ intervals for every $x'\in\R^{n-1}$ and every $h\ge 0$, we have for any $g\in C(\R^n)$ 
that
    $$
        \lim_{\varepsilon\to 0^+}\int_{\R^n} f_r^{\tau,\varepsilon}g\;dx=\int_{\R^n} f_r^{\tau}g\;dx.
    $$

\hspace{0.3cm}

\noindent\textit{Proof of Step 2.}
Using the representation,
\begin{align*}
\left|\int_{\R^n} g(x) f^{\tau,\varepsilon}(x)\;dx-\int_{\R^n} g(x) f^{\tau}(x)\;dx\right|
   &= \left|\int_{\supp f} \left(f^{\tau,\varepsilon}(x)-f^{\tau}(x)\right) g(x)\;dx\right| \\
   &= \left|\int_0^{\|f\|_{\infty}} \int_{\supp f} (\chi_{M^{\tau,\eps}(U_{x'}^h)}-\chi_{M^{\tau}(U_{x'}^h)} )g(x) \, dx dh\right|\\ 
   &\le \|g\|_{L^1(\supp f)}\|f\|_{\infty} \sup_{x',h} |M^{\tau,\eps}(U_{x'}^h)\triangle M^{\tau}(U_{x'}^h)|\\
   &\le \|g\|_{L^1(\supp f)}\|f\|_{\infty} (r+3)\eps\to 0,
\end{align*}
where we have used \textit{Step 1.} for the last bound.

\hspace{0.3cm}

\noindent
\textit{Step 3.}~To conclude the proof, we can approximate any good function $f$ by a sequence $\{f_r\}_{r\in\N}$ 
of good functions such that $f_r$ satisfies the properties of \textit{Step 2}.

\end{proof}

We now present an explicit estimate on the derivative of $\|f^\tau\|_{W^{1,p}(\R^n)}^p$.  
For simplicity, we will only state the case of $n=2$.

\begin{prop}\label{lem:good-local}
Assume that $f = f(x,y)$ is a non-negative good function, then
\begin{align*}
\frac{d}{d\tau}[f^\tau]_{W^{1,p}(\R^2)}^p&\bigg|_{\tau=0}
\le - \liminf_{\varepsilon \to 0^+}\int_{\R}\int_0^{\infty} \sum_{k=1}^{m_h}\delta_{\varepsilon}(y_{2k}+y_{2k-1}) \bigg|\frac{\partial y_{2k}}{\partial h}\bigg|^{-p} \bigg|\frac{\partial y_{2k-1}}{\partial h}\bigg|^{-p}\bigg[\\
    &\quad p\bigg( \bigg(\frac{\partial y_{2k}}{\partial x}\bigg)^2  
        + 1\bigg)^{p/2-1}
        \bigg(\frac{\partial y_{2k}}{\partial x}\bigg) \bigg(\frac{\partial y_{2k}}{\partial x} + \frac{\partial y_{2k-1}}{\partial x}\bigg)
        \bigg|\frac{\partial y_{2k}}{\partial h}\bigg|\bigg|\frac{\partial y_{2k-1}}{\partial h}\bigg|^p\\
    &\quad 
        +p\bigg( \bigg(\frac{\partial y_{2k-1}}{\partial x}\bigg)^2  
        + 1\bigg)^{p/2-1}
        \bigg(\frac{\partial y_{2k-1}}{\partial x}\bigg)  \bigg(\frac{\partial y_{2k}}{\partial x} + \frac{\partial y_{2k-1}}{\partial x}\bigg)
        \bigg|\frac{\partial y_{2k-1}}{\partial h}\bigg|\bigg|\frac{\partial y_{2k}}{\partial h}\bigg|^p
        \\
        &\quad - (p-1) \bigg( \bigg(\frac{\partial y_{2k}}{\partial x}\bigg)^2  
        + 1\bigg)^{p/2} \bigg|\frac{\partial y_{2k-1}}{\partial h}\bigg|^p \bigg(\bigg|\frac{\partial y_{2k}}{\partial h}\bigg| -\bigg| \frac{\partial y_{2k-1}}{\partial h}\bigg|\bigg)\\
         &\quad+ (p-1) \bigg( \bigg(\frac{\partial y_{2k-1}}{\partial x}\bigg)^2  
        + 1\bigg)^{p/2}\bigg|\frac{\partial y_{2k}}{\partial h}\bigg|^p
        \bigg(\bigg|\frac{\partial y_{2k}}{\partial h}\bigg| -\bigg| \frac{\partial y_{2k-1}}{\partial h}\bigg|\bigg)
        \bigg]\, dh \, dx
\end{align*}
where $f(x,y_i(h)) = h$ for $i=1,\dots,2m_h$ and $m_h = m(x,h)$.
\end{prop}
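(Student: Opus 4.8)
The plan is to compute $\frac{d}{d\tau}[f^{\tau,\varepsilon}]_{W^{1,p}(\R^2)}^p$ directly at $\tau=0$ using the smoothed speed $V_\varepsilon$ with $V_\varepsilon' = 2\delta_\varepsilon$, and then pass to the liminf as $\varepsilon \to 0^+$. The starting point is the coarea-type representation of the Dirichlet-type energy for a good function. Writing the level sets of $f$ in the $x_n=y$ direction via the solutions $y_i = y_i(x,h)$ to $f(x,y_i)=h$, one expresses $|\nabla f|$ in terms of the implicit function theorem: along the graph $\{f=h\}$ one has $\partial_y f \cdot \partial_h y_i = 1$ and $\partial_x f + \partial_y f \cdot \partial_x y_i = 0$, so that $|\nabla f|^2 = (\partial_y f)^2(1 + (\partial_x y_i)^2) = |\partial_h y_i|^{-2}(1+(\partial_x y_i)^2)$ on each branch. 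Changing variables from $(x,y)$ to $(x,h)$ on each monotonicity branch (the Jacobian contributing $|\partial_h y_i|$), the energy becomes
\[
[f]_{W^{1,p}(\R^2)}^p = \int_{\R} \int_0^\infty \sum_{i=1}^{2m_h} \big(1+(\partial_x y_i)^2\big)^{p/2} \, |\partial_h y_i|^{1-p} \, dh \, dx.
\]

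Next I would apply this representation to $f^{\tau,\varepsilon}$. By Definition~\ref{defn:cont set}(1) modified by the ODE \eqref{eq:ode} with speed $V_\varepsilon$, for each fixed $(x,h)$ the endpoints $y_{2k-1}^\tau, y_{2k}^\tau$ of the $k$-th interval of $U_{x'}^h$ evolve by $\frac{d}{d\tau} y_{2k-1}^\tau = \frac{d}{d\tau} y_{2k}^\tau = -V_\varepsilon(y_{2k}^\tau + y_{2k-1}^\tau, h)$ (so long as the intervals stay ordered, which holds for small $\tau$ by the good-function hypothesis (2) and compact support). Hence $\partial_h y_i$ and $\partial_x y_i$ change in $\tau$, and differentiating the energy representation at $\tau=0$ produces, via the chain rule, a sum of four terms per branch: two coming from $\partial_\tau \partial_x y_i = -\partial_x\big(V_\varepsilon(y_{2k}+y_{2k-1},h)\big) = -V_\varepsilon'(y_{2k}+y_{2k-1},h)(\partial_x y_{2k} + \partial_x y_{2k-1}) = -2\delta_\varepsilon(y_{2k}+y_{2k-1})(\partial_x y_{2k}+\partial_x y_{2k-1})$, differentiating the $(1+(\partial_x y_i)^2)^{p/2}$ factor, and two from $\partial_\tau \partial_h y_i = -2\delta_\varepsilon(y_{2k}+y_{2k-1})(\partial_h y_{2k}+\partial_h y_{2k-1})$ — wait, more carefully $\partial_\tau \partial_h y_i = \partial_h(\partial_\tau y_i) = -V_\varepsilon'\cdot \partial_h(y_{2k}+y_{2k-1}) = -2\delta_\varepsilon(y_{2k}+y_{2k-1})(\partial_h y_{2k}+\partial_h y_{2k-1})$ — differentiating the $|\partial_h y_i|^{1-p}$ factor. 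Collecting the $k$-th interval's contribution and factoring out the common $\delta_\varepsilon(y_{2k}+y_{2k-1})|\partial_h y_{2k}|^{-p}|\partial_h y_{2k-1}|^{-p}$ (after suitable algebraic rearrangement and using $|\partial_h y_i|^{1-p}\cdot(\text{its logarithmic derivative}) = (1-p)|\partial_h y_i|^{-p}\,\mathrm{sgn}(\partial_h y_i)\cdot\partial_\tau\partial_h y_i$ and noting $\mathrm{sgn}(\partial_h y_{2k}) = -\mathrm{sgn}(\partial_h y_{2k-1})$ since consecutive branches have opposite monotonicity) gives precisely the bracketed expression in the statement, with the overall minus sign absorbing the $-1$ from the speed and the $(1-p)$ factors.

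Finally, the liminf: the identity $\frac{d}{d\tau}[f^{\tau,\varepsilon}]^p\big|_{\tau=0}$ is an \emph{equality} for each fixed $\varepsilon$, and I would pass to $\varepsilon\to 0^+$ on the right-hand side, using that $f^{\tau,\varepsilon}\to f^\tau$ (the preceding lemma, tested against continuous $g$, upgraded to the energy by the good-function structure and the fact that the $W^{1,p}$ seminorm is continuous along the regularization — here one uses Lemma~\ref{lem:density}(2) and dominated convergence on the $(x,h)$ representation) so that $\liminf_{\varepsilon\to0^+}[f^{\tau,\varepsilon}]^p_{W^{1,p}} \geq [f^\tau]^p_{W^{1,p}}$ while $[f^{0,\varepsilon}]^p = [f]^p$, hence $\frac{d}{d\tau}[f^\tau]^p|_{\tau=0} \le \liminf_\varepsilon \frac{d}{d\tau}[f^{\tau,\varepsilon}]^p|_{\tau=0}$, which is the asserted inequality. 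The main obstacle I anticipate is twofold: first, the bookkeeping of the chain-rule differentiation — correctly tracking that differentiating $|\partial_h y_i|^{1-p}$ brings down $(1-p)|\partial_h y_i|^{-p}\mathrm{sgn}(\partial_h y_i)$ and that the signs of consecutive branches alternate, so the two ``$\partial_h$-terms'' combine into the $(|\partial_h y_{2k}| - |\partial_h y_{2k-1}|)$ differences shown; and second, justifying the interchange of $\frac{d}{d\tau}$ with the $dh\,dx$ integral and the passage to the liminf, which requires a uniform-in-$\varepsilon$ domination — this is where hypothesis (2) of Definition~\ref{defn:good} (a uniform lower bound on $|\partial_{x_n} f|$, hence a uniform upper bound on $|\partial_h y_i|$) and the compact support are essential, together with the fact that $\delta_\varepsilon$ is supported where $|y_{2k}+y_{2k-1}|<\varepsilon$, i.e. near the already-centered intervals, so the integrand stays bounded on the relevant region.
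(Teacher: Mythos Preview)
Your proposal is correct and follows essentially the same approach as the paper's own proof: express the energy via the co-area formula in the $(x,h)$ variables as $\sum_i \big(1+(\partial_x y_i)^2\big)^{p/2}\,|\partial_h y_i|^{1-p}$, differentiate in $\tau$ using the regularized speed $V_\varepsilon$ and the chain-rule identities $\partial_\tau\partial_x y_\pm = -V_\varepsilon'\,(\partial_x y_+ + \partial_x y_-)$ and $\partial_\tau\partial_h y_\pm = -V_\varepsilon'\,(\partial_h y_+ + \partial_h y_-)$, then pass to the $\liminf$ by lower semicontinuity of the $W^{1,p}$ seminorm. The only discrepancy is a harmless normalization: in the proof the paper actually takes $V_\varepsilon' = \delta_\varepsilon$ (despite the earlier setup writing $V_\varepsilon' = 2\delta_\varepsilon$), so your extra factor of $2$ should be dropped to match the displayed formula.
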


When $p=2$, we can further factor the integrand to readily check the sign of the derivative. 

\begin{cor}\label{cor:local deriv}
Assume that $f = f(x,y)$ is a non-negative good function, then
\begin{align*}
\frac{d}{d\tau}&[f^\tau]_{H^1(\R^2)}^2\bigg|_{\tau=0}\\
    &\le -\liminf_{\varepsilon \to 0^+}\int_{\R}\int_0^{\infty} \sum_{k=1}^{m_h} \delta_{\varepsilon}(y_{2k}+y_{2k-1}) \bigg|\frac{\partial y_{2k}}{\partial h}\bigg|^{-2} \bigg|\frac{\partial y_{2k-1}}{\partial h}\bigg|^{-2}\bigg(\bigg|\frac{\partial y_{2k}}{\partial h}\bigg| + \bigg|\frac{\partial y_{2k-1}}{\partial h}\bigg|\bigg)\\
&\quad \bigg[ \bigg(\bigg|\frac{\partial y_{2k}}{\partial h}\bigg| - \bigg|\frac{\partial y_{2k-1}}{\partial h}\bigg|\bigg)^2 
    + A_k(h) \bigg\langle \frac{\partial y_{2k}}{\partial x}, \frac{\partial y_{2k-1}}{\partial x} \bigg\rangle \cdot \bigg\langle \frac{\partial y_{2k}}{\partial x}, \frac{\partial y_{2k-1}}{\partial x} \bigg\rangle \bigg]
    \leq 0
\end{align*}
where $A_k(h)$ is the positive semidefinite matrix
\[
A_k(h) = \begin{pmatrix}
\big|\frac{\partial y_{2k-1}}{\partial h}\big|^2 & \big|\frac{\partial y_{2k}}{\partial h}\big|\big|\frac{\partial y_{2k-1}}{\partial h}\big|\\[.25em]
\big|\frac{\partial y_{2k}}{\partial h}\big|\big|\frac{\partial y_{2k-1}}{\partial h}\big| & \big|\frac{\partial y_{2k}}{\partial h}\big|^2
\end{pmatrix}.
\]
\end{cor}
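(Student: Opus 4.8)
The plan is to derive Corollary~\ref{cor:local deriv} directly from Proposition~\ref{lem:good-local} by specializing to $p=2$ and then algebraically reorganizing the integrand into a manifestly non-positive form. First I would set $p=2$ in the statement of Proposition~\ref{lem:good-local}. The exponents $(\cdot + 1)^{p/2-1}$ collapse to $1$ and $(\cdot + 1)^{p/2}$ becomes $1 + (\partial_x y_{2k})^2$ (resp. with $y_{2k-1}$), while the factor $p(p-1) = 2$ and $(p-1)=1$. After pulling out the common positive weight $\delta_\varepsilon(y_{2k}+y_{2k-1})|\partial_h y_{2k}|^{-2}|\partial_h y_{2k-1}|^{-2}$, the bracketed quantity becomes a polynomial in the four partial derivatives $\partial_x y_{2k}, \partial_x y_{2k-1}, \partial_h y_{2k}, \partial_h y_{2k-1}$.

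Next I would carry out the factorization. Abbreviate $a = \partial_x y_{2k}$, $b = \partial_x y_{2k-1}$, $\alpha = |\partial_h y_{2k}|$, $\beta = |\partial_h y_{2k-1}|$. With these, the bracket from Proposition~\ref{lem:good-local} at $p=2$ reads
\[
2a(a+b)\alpha\beta^2 + 2b(a+b)\beta\alpha^2 - (1+a^2)\beta^2(\alpha-\beta) + (1+b^2)\alpha^2(\alpha-\beta).
\]
I would first collect the terms not involving $a,b$: $-\beta^2(\alpha-\beta) + \alpha^2(\alpha-\beta) = (\alpha-\beta)(\alpha^2-\beta^2) = (\alpha+\beta)(\alpha-\beta)^2$. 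This produces the first summand in the corollary. The remaining terms, which all carry a factor from $\{a,b\}$, are $2a(a+b)\alpha\beta^2 + 2b(a+b)\beta\alpha^2 - a^2\beta^2(\alpha-\beta) + b^2\alpha^2(\alpha-\beta)$; I would reorganize these to extract the common factor $(\alpha+\beta)$ and recognize the quadratic form $\beta^2 a^2 + 2\alpha\beta ab + \alpha^2 b^2 = (\beta a + \alpha b)^2$, which is exactly $\langle (a,b), A_k(h)(a,b)\rangle$ since $A_k(h) = \begin{pmatrix}\beta^2 & \alpha\beta\\ \alpha\beta & \alpha^2\end{pmatrix}$ has rank one with $\langle (a,b), A_k(a,b)\rangle = (\beta a + \alpha b)^2 \geq 0$. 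The bookkeeping of the cross terms $-a^2\beta^2(\alpha-\beta) + b^2\alpha^2(\alpha-\beta)$ against $2a(a+b)\alpha\beta^2 + 2b(a+b)\beta\alpha^2$ is the step where I must be careful: I would verify that after adding and subtracting appropriate multiples of $\alpha\beta(\alpha+\beta)(a^2,b^2,ab)$ the entire $a,b$-dependent part equals $(\alpha+\beta)(\beta a + \alpha b)^2$ up to the already-extracted terms, so that the whole bracket becomes $(\alpha+\beta)\big[(\alpha-\beta)^2 + (\beta a + \alpha b)^2\big]$, matching the corollary's display with the inner product written twice as $\langle (a,b), A_k\rangle \cdot \langle (a,b), A_k\rangle$ notation (which is just $(\beta a + \alpha b)^2$ expressed via the matrix).

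Finally, since $\alpha, \beta > 0$ (good functions have $|\partial f/\partial x_n|$ bounded below, so $|\partial_h y_i|$ is finite and the reciprocals make sense, and the $\partial_h y_i$ are nonzero), the factor $(\alpha+\beta)$ is strictly positive, $(\alpha-\beta)^2 \geq 0$, and $(\beta a + \alpha b)^2 \geq 0$; hence the integrand in the corollary is non-negative, and with the overall minus sign the derivative is $\leq 0$. The main obstacle I anticipate is purely the algebraic reorganization: matching the expanded polynomial from Proposition~\ref{lem:good-local} term-by-term against the factored expression $(\alpha+\beta)[(\alpha-\beta)^2 + (\beta a + \alpha b)^2]$, since the intermediate cross terms must cancel exactly and it is easy to drop a sign or a factor of $2$. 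Once that identity is checked, positivity is immediate and no analytic input beyond Proposition~\ref{lem:good-local} and the definition of good function is needed.
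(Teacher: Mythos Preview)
Your approach is correct and is exactly what the paper intends: the corollary is obtained from Proposition~\ref{lem:good-local} by setting $p=2$ and algebraically factoring the bracket, and your identification of the key identity
\[
2a(a+b)\alpha\beta^{2}+2b(a+b)\beta\alpha^{2}-(1+a^{2})\beta^{2}(\alpha-\beta)+(1+b^{2})\alpha^{2}(\alpha-\beta)=(\alpha+\beta)\big[(\alpha-\beta)^{2}+(\beta a+\alpha b)^{2}\big]
\]
together with the observation that $A_{k}(h)$ is the rank-one matrix giving $\langle v,A_{k}v\rangle=(\beta a+\alpha b)^{2}$ matches the paper's factorization precisely.
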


\begin{rem}
In the case of $p\not=2$, 
we have checked numerically that the integrand in the expression for the derivative is indeed negative, but due to the nonlinearity, we have not been able to analytically observe the sign as cleanly as in the case of $p=2$. 
\end{rem}

Notice in Corollary \ref{cor:local deriv} that the derivative is strictly negative if and only if
\begin{equation}\label{eq:asymmetric}
\frac{\partial y_{2k}}{\partial h}\not= - \frac{\partial y_{2k-1}}{\partial h}
\end{equation}
on a set of positive measure. 
Indeed, as mentioned after Corollary \ref{cor:Lip}, the strict inequality does not hold in general. 
For instance, the derivative in $\tau$ can be zero when $\supp f$ is not connected, but $f$ is radially decreasing in each connected component.
We illustrate this with a simple example.

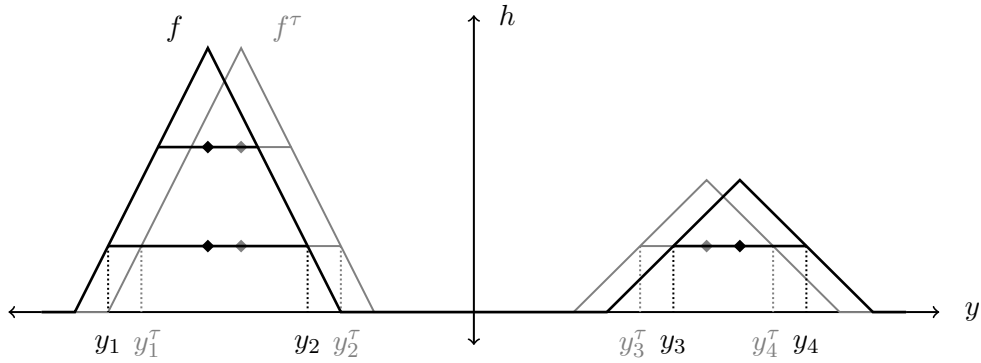
\begin{figure}[htbp]
 \begin{tikzpicture}[scale=1.75, use Hobby shortcut, closed=false]
 \tikzset{bullet/.style={diamond,fill,inner sep=1.25pt}}
\draw[line width=.75pt,<->] (0,-.25)--(0,2.25); 
	\draw (.25,2.25) node {$h$};
\draw[line width=.75pt,<->] (-3.5,0)--(3.5,0); 
	\draw (3.75,0) node {$y$};
\draw[line width=.75pt, gray] (-3.25,0)--(-2.75,0)--(-1.75,2)--(-.75,0) 
		--(.75,0)--(1.75,1)--(2.75,0)--(3.25,0); 
	\draw[gray] (-1.4,2.15) node { $f^\tau$};
\draw[line width=.75pt, gray] (-2.5,.5)--(-1,.5);
	\draw[gray] (-1.75,.5) node[bullet] {}; 
\draw[line width=.75pt,gray] (1.25,.5)--(2.25,.5); 
	\draw[gray] (1.75,.5) node[bullet] {};
\draw[line width=.75pt,gray] (-2.13,1.25)--(-1.37,1.25); 
	\draw[gray] (-1.75,1.25) node[bullet] {}; 
\draw[line width=.75pt, densely dotted, gray] (-2.5,.5)--(-2.5,0);
	\draw[gray] (-2.45,-.24) node { $y_1^\tau$};
\draw[line width=.75pt, densely dotted,gray] (-1,.5)--(-1,0);
	\draw[gray] (-.95,-.24) node { $y_2^\tau$};
\draw[line width=.75pt, densely dotted,gray] (1.25,.5)--(1.25,0);
	\draw[gray] (1.2,-.24) node { $y_3^\tau$};
\draw[line width=.75pt, densely dotted,gray] (2.25,.5)--(2.25,0);
	\draw[gray] (2.2,-.24) node { $y_4^\tau$};
\draw[line width=1pt] (-3.25,0)--(-3,0)--(-2,2)--(-1,0) 
		--(1,0)--(2,1)--(3,0)--(3.25,0); 
	\draw (-2.25,2.15) node { $f$};
\draw[line width=1pt] (-2.75,.5)--(-1.25,.5); 
	\draw (-2,.5) node[bullet] {}; 
\draw[line width=1pt] (1.5,.5)--(2.5,.5); 
	\draw (2,.5) node[bullet] {};
\draw[line width=1pt] (-2.38,1.25)--(-1.62,1.25); 
	\draw (-2,1.25) node[bullet] {}; 
\draw[line width=.75pt, densely dotted] (-2.75,.5)--(-2.75,0);
	\draw (-2.75,-.25) node { $y_1$};
\draw[line width=.75pt, densely dotted] (-1.25,.5)--(-1.25,0);
	\draw (-1.25,-.25) node { $y_2$};
\draw[line width=.75pt, densely dotted] (1.5,.5)--(1.5,0);
	\draw (1.5,-.25) node { $y_3$};
\draw[line width=.75pt, densely dotted] (2.5,.5)--(2.5,0);
	\draw (2.5,-.25) node { $y_4$};
\end{tikzpicture}
\caption{Graph of $f$ and $f^\tau$ in Example \ref{ex:zero} with $\tau = .25$, $x=0$}
\label{fig:local-ex}
\end{figure}

\begin{ex}\label{ex:zero}
Consider the good function $f:\R^2 \to \R$ given by
\[
f(x,y) = \begin{cases}
    -|(x,y-2)|+1 & \hbox{if}~|(x,y-2)| \leq 1\\
    -2|(x,y+2)|+2 & \hbox{if}~|(x,y+2)| \leq 1\\
    0 & \hbox{otherwise}
\end{cases}
\]
and illustrated in Figure \ref{fig:local-ex} at $x=0$. One can readily check that \emph{equality} holds in \eqref{eq:asymmetric}, so that the integrand in Proposition \ref{lem:good-local} is exactly zero. 
\end{ex}

\begin{proof}[Proof of Proposition~\ref{lem:good-local}]
Begin by writing the energy as
\[
[f^\tau]_{W^{1,p}(\R^2)}^p
    = \int_{\R^2} |\nabla f^\tau|^p \, dy \, dx
    = \int_{\R^2} \bigg( \bigg(\frac{\partial f^\tau}{\partial x}\bigg)^2 + \bigg(\frac{\partial f^\tau}{\partial y}\bigg)^2\bigg)^{p/2}\, dy \, dx.
\]
For each fixed $x \in \R$, we make the change of variables $f(x,y) = h$ using the co-area formula in the variable $y$ to write
\begin{align*}
[f^\tau]_{W^{1,p}(\R^n)}^p
    &= \int_{\R^2} \bigg( \bigg(\frac{\partial f^\tau}{\partial x}\bigg)^2 + \bigg(\frac{\partial f^\tau}{\partial y}\bigg)^2\bigg)^{p/2} \bigg|\frac{\partial f}{\partial y}\bigg|^{-1}\bigg|\frac{\partial f}{\partial y}\bigg| \, dy \, dx\\
   &= \int_{\R}\int_0^{\infty}  \bigg( \bigg(\frac{\partial f^\tau}{\partial x}\bigg)^2 + \bigg(\frac{\partial f^\tau}{\partial y}\bigg)^2\bigg)^{p/2} \bigg|\frac{\partial f}{\partial y}\bigg|^{-1} \bigg|_{\{f(x,y) = h\}} \, dh \, dx.
\end{align*}
Since $f$ is a good function, for each $h>0$ except a finite set, there are at most $2m_h$ solutions to $f(x,y) = h$. We denotes these by $y_i = y_i(x,h)$, $i = 1,\dots, 2m_h$. 
In the new variables, we have
\begin{equation}\label{eq:y-derivatives}
\frac{\partial f^\tau}{\partial y}(x, y_+) = \bigg( \frac{\partial y_+^\tau}{\partial h}\bigg)^{-1}<0, \quad
\frac{\partial f^\tau}{\partial y}(x, y_-) = \bigg( \frac{\partial y_-^\tau}{\partial h}\bigg)^{-1}>0
\end{equation}
and
\begin{align*}
\frac{\partial f^\tau}{\partial x}(x, y_+) = \frac{\partial y_+^\tau}{\partial x} \bigg( \frac{\partial y_+^\tau}{\partial h}\bigg)^{-1}, \quad
\frac{\partial f^\tau}{\partial x}(x, y_-) = \frac{\partial y_-^\tau}{\partial x}\bigg( \frac{\partial y_-^\tau}{\partial h}\bigg)^{-1}
\end{align*}
for an arbitrary $y_-< y_+$ (recall Notation \ref{notation:xs}).
Then, we write
\begin{align*}
[f^\tau]_{W^{1,p}(\R^n)}^p
    &= \int_{\R}\int_0^{\infty} \sum_{k=1}^{m_h}\bigg[
    \bigg( \bigg(\frac{\partial y_{2k}^\tau}{\partial x}\bigg)^2 \bigg( \frac{\partial y_{2k}^\tau}{\partial h}\bigg)^{-2} 
        + \bigg( \frac{\partial y_{2k}^\tau}{\partial h}\bigg)^{-2}\bigg)^{p/2} \bigg|\frac{\partial y_{2k}^\tau}{\partial h}\bigg|\\
    &\quad + \bigg( \bigg(\frac{\partial y_{2k-1}^\tau}{\partial x}\bigg)^2 \bigg( \frac{\partial y_{2k-1}^\tau}{\partial h}\bigg)^{-2} 
        + \bigg( \frac{\partial y_{2k-1}^\tau}{\partial h}\bigg)^{-2}\bigg)^{p/2} \bigg|\frac{\partial y_{2k-1}^\tau}{\partial h}\bigg|\bigg]\, dh \, dx.
\end{align*}
For simplicity in the proof, let us assume that $m_h = 1$ or $m_h=0$ for all $h$ except a finite set, and write
\begin{align*}
[f^\tau]_{W^{1,p}(\R^n)}^p
    &= \int_{\R}\int_0^{\infty}\bigg[
    \bigg( \bigg(\frac{\partial y_{+}^\tau}{\partial x}\bigg)^2 \bigg( \frac{\partial y_{+}^\tau}{\partial h}\bigg)^{-2} 
        + \bigg( \frac{\partial y_{+}^\tau}{\partial h}\bigg)^{-2}\bigg)^{p/2} \bigg|\frac{\partial y_{+}^\tau}{\partial h}\bigg|\\
    &\quad + \bigg( \bigg(\frac{\partial y_{-}^\tau}{\partial x}\bigg)^2 \bigg( \frac{\partial y_{-}^\tau}{\partial h}\bigg)^{-2} 
        + \bigg( \frac{\partial y_{-}^\tau}{\partial h}\bigg)^{-2}\bigg)^{p/2} \bigg|\frac{\partial y_{-}^\tau}{\partial h}\bigg|\bigg]\, dh \, dx\\
    &= \int_{\R}\int_0^{\infty}\bigg[
    \bigg( \bigg(\frac{\partial y_{+}^\tau}{\partial x}\bigg)^2  
        + 1\bigg)^{p/2} \bigg|\frac{\partial y_{+}^\tau}{\partial h}\bigg|^{1-p}
        + \bigg( \bigg(\frac{\partial y_{-}^\tau}{\partial x}\bigg)^2  
        + 1\bigg)^{p/2} \bigg|\frac{\partial y_{-}^\tau}{\partial h}\bigg|^{1-p}\bigg]\, dh \, dx.
\end{align*}
We have now arrived at a useful expression for taking the derivative in $\tau$. 
Indeed, let $V_{\varepsilon}$ be an approximation of the speed $V(y) = \sgn(y)$ such that $\delta_{\varepsilon} =V_{\varepsilon}'$.
Then, from \eqref{eq:ode} with speed $V_{\varepsilon}$, 
we obtain
\begin{align*}
\frac{\partial^2 y_{\pm}^\tau}{\partial \tau \partial h} 
    = - \delta_{\varepsilon}(y_++y_-) \bigg(\frac{\partial y_+}{\partial h} + \frac{\partial y_-}{\partial h}\bigg)
\end{align*}
and similarly
\begin{align*}
\frac{\partial^2 y_{\pm}^\tau}{\partial \tau \partial x} 
    = - \delta_{\varepsilon}(y_++y_-) \bigg(\frac{\partial y_+}{\partial x} + \frac{\partial y_-}{\partial x}\bigg).
\end{align*}
With this, we can use the lower-semincontinuity of the $W^{1,p}$ seminorm, to have
\begin{align*}
\frac{d}{d\tau}&[f^\tau]_{W^{1,p}(\R^n)}^p\bigg|_{\tau=0}\le \liminf_{\eps\to 0^+}\frac{d}{d\tau}[f^{\tau,\eps}]_{W^{1,p}(\R^n)}^p\bigg|_{\tau=0}\\
    &\le \liminf_{\varepsilon \to 0^+}\int_{\R}\int_0^{\infty}\bigg[
    -p\bigg( \bigg(\frac{\partial y_{+}}{\partial x}\bigg)^2  
        + 1\bigg)^{p/2-1}
        \bigg(\frac{\partial y_{+}}{\partial x}\bigg) \delta_{\varepsilon}(y_++y_-) \bigg(\frac{\partial y_+}{\partial x} + \frac{\partial y_-}{\partial x}\bigg)
        \bigg|\frac{\partial y_{+}}{\partial h}\bigg|^{1-p}\\
    &\quad 
        -p\bigg( \bigg(\frac{\partial y_{-}}{\partial x}\bigg)^2  
        + 1\bigg)^{p/2-1}
        \bigg(\frac{\partial y_{-}}{\partial x}\bigg) \delta_{\varepsilon}(y_++y_-) \bigg(\frac{\partial y_+}{\partial x} + \frac{\partial y_-}{\partial x}\bigg)
        \bigg|\frac{\partial y_{-}}{\partial h}\bigg|^{1-p}
        \\
        &\quad+ (p-1) \bigg( \bigg(\frac{\partial y_{+}}{\partial x}\bigg)^2  
        + 1\bigg)^{p/2} \bigg|\frac{\partial y_{+}}{\partial h}\bigg|^{-p-1}\bigg(\frac{\partial y_{+}}{\partial h}\bigg)\delta_{\varepsilon}(y_++y_-) \bigg(\frac{\partial y_+}{\partial h} + \frac{\partial y_-}{\partial h}\bigg)\\
         &\quad+ (p-1) \bigg( \bigg(\frac{\partial y_{-}}{\partial x}\bigg)^2  
        + 1\bigg)^{p/2} \bigg|\frac{\partial y_{-}}{\partial h}\bigg|^{-p-1}\bigg(\frac{\partial y_{-}}{\partial h}\bigg)\delta_{\varepsilon}(y_++y_-) \bigg(\frac{\partial y_+}{\partial h} + \frac{\partial y_-}{\partial h}\bigg)
        \bigg]\, dh \, dx.
\end{align*}
Recalling \eqref{eq:y-derivatives} and factoring gives the desired expression.
\end{proof}

\subsection{Explicit derivative computation for good functions}
The objective of this section is to compute the derivative of the nonlocal energy explicitly. With the same notation as in Section \ref{sec:Main Thm}, we consider again $\mathcal{F}_\varepsilon^p$ and $\mathcal{I}_\varepsilon^p$
defined in \eqref{eq:Fep} and
\eqref{eq:Iep}, respectively.
When $f$ is a good function, we can write each $U_{x'}^h$ in \eqref{eq:p-interaction} as a finite union of open intervals.
This provides a useful expression of $\mathcal{I}_\varepsilon^p$ which allows for more explicit computation. 

It will be useful to notate the first and second anti-derivatives of $K(r) = K_{\varepsilon, |x'|}(r)$ in $r$ by
\begin{equation}\label{eq:antideriv}
\bar{K}(r) := \int_0^r K(\xi) \, d\xi,
    \quad \bar{\bar{K}}(r) = \int_0^r \int_0^\rho K(\xi) \, d \xi \, d\rho. 
\end{equation}
We now present the main result of this section.  

\begin{prop} \label{lem:deriv for good}
Let $\varepsilon \geq 0$. 
Assume that $f$ is a non-negative good function. Then, 
\begin{equation}\label{eq:deriv level sets}
\begin{aligned}
\frac{d}{d\tau}\mathcal{F}_{\varepsilon}^p(f^\tau) \bigg|_{\tau=0}
	 &=-p (p-1)\int_{\R^{n-1}} \int_{\R^{n-1}} \int_{0}^{\infty} \int_0^{\infty}  \sum_{\ell=1}^{m_u} \sum_{k=1}^{m_h}
	|h-u|^{p-2}  \\
&\quad\bigg[
    \bar{K}_{\varepsilon,\abs{x'-y'}}( x_{2k}-y_{2\ell}) 
	 -\bar{K}_{\varepsilon,\abs{x'-y'}}( x_{2k}-y_{2\ell-1}) \\
&\quad-\bar{K}_{\varepsilon,\abs{x'-y'}}( x_{2k-1}-y_{2\ell}) 
	 +\bar{K}_{\varepsilon,\abs{x'-y'}}( x_{2k-1}-y_{2\ell-1})
	\bigg]\\
 &\quad\left(\sgn(x_{2k}+ x_{2k-1}) - \sgn(y_{2\ell}+y_{2\ell-1})\right)
	\,  dh \, du \, dx' \, dy' \le 0
\end{aligned}
\end{equation}
where $f(x',x_i(h)) = h$, $f(y',y_{i}(u)) = u$ for $i=1,\dots,2m$ and $m_h = m(x',h)$, $m_u = m(y',u)$.
\end{prop}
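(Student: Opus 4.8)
The plan is to compute $\frac{d}{d\tau}\mathcal F^p_\varepsilon(f^\tau)\big|_{\tau=0}$ directly from the layer-cake representation \eqref{eq:p-interaction}, carrying out the one-dimensional $x_n$- and $y_n$-integrals explicitly (this is the only place the good-function hypothesis is used), and then to deduce the sign from Lemma \ref{lem:one-d}.

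\emph{Step 1: reduction to the one-dimensional integral.} Since $f^\tau$ preserves the $L^p$ norm, $\frac{d}{d\tau}\mathcal F^p_\varepsilon(f^\tau)=\frac{d}{d\tau}\mathcal I^p_\varepsilon(f^\tau)$, and \eqref{eq:p-interaction} writes $\mathcal I^p_\varepsilon(f^\tau)$ as
\[
-p(p-1)\int_{\R^{2(n-1)}}\int_{\R^2_+}|h-u|^{p-2}\,I_\varepsilon\big[U_{x'}^h,U_{y'}^u\big](\tau)\,dh\,du\,dx'\,dy',
\]
with $I_\varepsilon[U_1,U_2](\tau)$ as in Lemma \ref{lem:one-d} for the kernel $K_\varepsilon=K_{\varepsilon,|x'-y'|}$. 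It therefore suffices to compute $\frac{d}{d\tau}I_\varepsilon[U_{x'}^h,U_{y'}^u](\tau)\big|_{\tau=0}$ and to differentiate under the integral sign.

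\emph{Step 2: explicit derivative of $I_\varepsilon$.} Because $f$ is a good function, for all but finitely many $h$ the section is $U_{x'}^h=\bigcup_{k=1}^{m_h}(x_{2k-1},x_{2k})$, and for $\tau$ in a right-neighborhood of $0$ the translated intervals stay disjoint, so $M^\tau(U_{x'}^h)=\bigcup_k(x_{2k-1}^\tau,x_{2k}^\tau)$; likewise for $U_{y'}^u$. Integrating first in $x_n$ and then in $y_n$ with the antiderivatives $\bar K,\bar{\bar K}$ of \eqref{eq:antideriv} expresses $I_\varepsilon[U_{x'}^h,U_{y'}^u](\tau)$ as the finite double sum over $k,\ell$ of the four corner values $\bar{\bar K}(x_{2k}^\tau-y_{2\ell-1}^\tau)-\bar{\bar K}(x_{2k}^\tau-y_{2\ell}^\tau)-\bar{\bar K}(x_{2k-1}^\tau-y_{2\ell-1}^\tau)+\bar{\bar K}(x_{2k-1}^\tau-y_{2\ell}^\tau)$. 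On the $k$-th interval every endpoint moves with the constant velocity $\frac{d}{d\tau}x_i^\tau\big|_{\tau=0}=-\sgn(x_{2k-1}+x_{2k})$, and similarly $-\sgn(y_{2\ell-1}+y_{2\ell})$ on the $\ell$-th; differentiating at $\tau=0$ with $\bar{\bar K}'=\bar K$ and collecting the four corners produces, for each pair $k,\ell$, the bracket $\bar K(x_{2k}-y_{2\ell})-\bar K(x_{2k}-y_{2\ell-1})-\bar K(x_{2k-1}-y_{2\ell})+\bar K(x_{2k-1}-y_{2\ell-1})$ times $\sgn(x_{2k}+x_{2k-1})-\sgn(y_{2\ell}+y_{2\ell-1})$. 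Substituting into Step 1 yields exactly \eqref{eq:deriv level sets}. The interchange of $\frac{d}{d\tau}$ and the $(x',y',h,u)$-integral is legitimate since $f$ is compactly supported (so $x',y'$ range over a bounded set and $0<h,u<\|f\|_{L^\infty(\R^n)}$) and, for $\varepsilon>0$, $\bar K_\varepsilon$ and $\bar K_\varepsilon'$ are bounded on the relevant range while $|h-u|^{p-2}$ is integrable; the case $\varepsilon=0$ follows by letting $\varepsilon\to0^+$, or directly using $[f]^p_{W^{s,p}(\R^n)}<\infty$ for good $f$ together with the cancellation in the four-term bracket to tame the kernel on $\{x'=y'\}$.

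\emph{Step 3: sign, and main obstacle.} For each fixed $x',y',h,u$ the inner $(x_n,y_n)$-integral is precisely $I_\varepsilon[U_{x'}^h,U_{y'}^u](\tau)$ for the finite-measure open sets $U_{x'}^h,U_{y'}^u$, so \eqref{eq:1st part} of Lemma \ref{lem:one-d} gives $\frac{d^+}{d\tau}I_\varepsilon(\tau)\ge0$; for a good function this upper Dini derivative at $\tau=0$ coincides with the classical derivative found in Step 2, since the sum representation is smooth for small $\tau>0$. As $p(p-1)>0$ and $|h-u|^{p-2}\ge0$, the overall sign in \eqref{eq:deriv level sets} is $\le0$. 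The main obstacle is Step 2: the bookkeeping over several intervals (the sum formula being valid only up to the first merging, which is immaterial at $\tau=0$) and the differentiation under the integral when $\varepsilon=0$, where the kernel degenerates on the measure-zero set $\{x'=y'\}$. The sign claim itself is immediate from Lemma \ref{lem:one-d}.
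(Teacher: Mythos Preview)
Your derivation of the formula (Steps 1 and 2) coincides with the paper's: both pass through Lemma \ref{lem:energy for good} (the $\bar{\bar K}$ corner representation) and differentiate at $\tau=0$ using the constant interval speeds $-\sgn(x_{2k}+x_{2k-1})$, $-\sgn(y_{2\ell}+y_{2\ell-1})$.

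The sign argument in your Step 3 differs from the paper's. You invoke \eqref{eq:1st part} of Lemma \ref{lem:one-d} to conclude that $\frac{d}{d\tau}I_\varepsilon[U_{x'}^h,U_{y'}^u](0)\ge 0$ for each fixed $x',y',h,u$, which immediately gives the global sign. The paper instead fixes a single pair of intervals $(x_-,x_+)$, $(y_-,y_+)$ and proves directly, via three geometric cases (embedded, separated, overlapping), that the four-term $\bar K_\varepsilon$ bracket multiplied by the sign difference is nonnegative; this yields the stronger statement that every $(k,\ell)$ summand in \eqref{eq:deriv level sets} is nonpositive. Your shortcut is legitimate and more economical, but the paper's termwise analysis buys something you need at the end: each case produces an explicit lower bound of the form $(\cdots)\min|K_\varepsilon'|$, and since $|K_\varepsilon'|$ is monotone in $\varepsilon$ (equation \eqref{eq:K-eps-prime}), the paper can pass to $\varepsilon=0$ by monotone convergence of the integrand. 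Your justification of the $\varepsilon\to 0^+$ limit (``cancellation in the four-term bracket to tame the kernel on $\{x'=y'\}$'') is a sketch rather than an argument; if you want to keep the Lemma \ref{lem:one-d} route for the sign, you should still supply a dominated or monotone convergence argument for the limit, and the cleanest one available is precisely the termwise monotonicity the paper extracts from its case analysis.
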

Even more, we can explicitly write down the derivative in real variables.

\begin{notation}\label{E-notation}
For an interval $I =(a,b) \subset \R$, we write 
$I = I^{+}$ if $a+b>0$, $I = I^0$ if $a+b = 0$, and $I = I^-$ if $a+b<0$. 
For a fixed $x \in \R^n$, we write the sections as a finite union of intervals
\[
U_{x'}^{h}
    = \bigcup_{i=1}^m I_i
    = \bigg(\bigcup_{i=1}^{m_+} I_i^+\bigg)
    \cup
    \bigg(\bigcup_{i=1}^{m_0} I_i^0\bigg)
    \cup
    \bigg(\bigcup_{i=1}^{m_-} I_i^-\bigg), \quad \hbox{for}~h = f(x)
\]
and where $m= m_+ + m_0 + m_-$. We denote $E_+, E_0,E_- \subset \partial U_{x'}^{f(x)}$ as the set of points that belong to a piece of a boundary of a level set that is moving to the left, centered, and moving to the right, respectively, see Figure \ref{fig:E-sets}. More precisely,
\begin{align*}
E_+ = \{ x : x_n \in \partial I^+ \}, \quad
E_0 = \{ x : x_n \in \partial I^0 \}, \quad
E_- = \{ x : x_n \in \partial I^- \}.
\end{align*}
\end{notation}

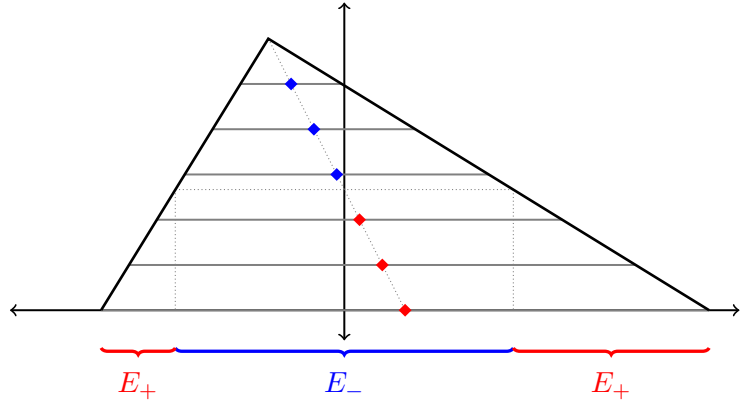
\begin{figure}[htbp]
\begin{tikzpicture}[scale=0.4, use Hobby shortcut, closed=false]
 \tikzset{bullet/.style={diamond,fill,inner sep=1.25pt}}
\draw[<->,line width=.75pt] (-1,-1)--(-1,10.2); 
\draw[<->,line width=.75pt] (-12,0)--(12,0); 
\draw[densely dotted,gray]  (-3.5,9)--(1,0); 
\draw[densely dotted,gray]  (-6.56,4)--(4.56,4); 
\draw[gray, densely dotted] (-6.56,4)-- (-6.56,0); 
\draw[gray, densely dotted] (4.56,4)-- (4.56,0); 
\draw[line width=.75pt, gray] (-9,0)--(11,0); 
	\draw[red] (1,0) node[bullet] {};
\draw[line width=.75pt, gray] (-8.08,1.5)--(8.58,1.5); 
	\draw[red] (.25,1.5) node[bullet] {};
\draw[line width=.75pt, gray] (-7.17,3)--(6.17,3); 
	\draw[red] (-.5,3) node[bullet] {};
\draw[line width=.75pt, gray] (-6.25,4.5)--(3.75,4.5); 
	\draw[blue] (-1.25,4.5) node[bullet] {};
\draw[line width=.75pt, gray] (-5.33,6)--(1.33,6); 
	\draw[blue] (-2,6) node[bullet] {};	
\draw[line width=.75pt, gray] (-4.42,7.5)--(-1.08,7.5); 
	\draw[blue] (-2.75,7.5) node[bullet] {};	
\draw[line width=1pt] (-9,0)--(-3.5,9)--(11,0); 
\draw [ line width=1.25pt,decoration={brace,mirror,raise=0.5cm}, decorate, blue] (-6.56,0)--(4.56,0); 
	\node[blue] at (-1,-2.5) {$E_-$};
\draw [ line width=1.25pt,decoration={brace,mirror,raise=0.5cm}, decorate, red] (-9,0)--(-6.56,0); 
	\node[red] at (-7.79,-2.5) {$E_+$};
\draw [ line width=1.25pt,decoration={brace,mirror,raise=0.5cm}, decorate, red] (4.56,0)--(11,0); 
	\node[red] at (7.78,-2.5) {$E_+$};
\end{tikzpicture}
\caption{Decomposition of $\supp f$ based on the center of mass of the level sets.}
\label{fig:E-sets}
\end{figure}

The following corollary of Proposition \ref{lem:deriv for good} follows from undoing the change of variables. 
\begin{cor} \label{lem:changeback-nonlocal}
Assume that $f$ is a good function, then 
\begin{equation}\label{eq:antisymmetry energy}
\begin{aligned}
\frac{d}{d\tau}\mathcal{F}_{\varepsilon}^p(f^\tau) \bigg|_{\tau=0}=2p (p-1) &\bigg(
   \int_{E_+}\int_{E_- \cup E_0} \bar{K}_{\varepsilon,|x'-y'|}(x_n-y_n)f_{x_n}(x) f_{y_n}(y) \, dx \, dy \\
   &\quad- \int_{E_-}\int_{E_+ \cup E_0} \bar{K}_{\varepsilon,|x'-y'|}(x_n-y_n)f_{x_n}(x) f_{y_n}(y) \, dx \, dy
\bigg).
\end{aligned}
\end{equation}
\end{cor}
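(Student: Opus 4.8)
The plan is to start from the level-set formula \eqref{eq:deriv level sets} in Proposition \ref{lem:deriv for good} and change variables back to the spatial variables $x_n, y_n$ via the co-area formula, fibre by fibre in $x'$ and $y'$. Fix $x', y' \in \R^{n-1}$ and consider the inner double integral over heights $h, u \in (0,\infty)$ together with the finite sums over $k = 1, \dots, m_h$ and $\ell = 1, \dots, m_u$. For the $k$-th interval $I_k = (x_{2k-1}, x_{2k})$ of $U_{x'}^h$, the bracketed combination of $\bar K_{\varepsilon}$-values factors as a difference of increments; more precisely, since $f$ is a good function, $h \mapsto x_{2k}(x',h)$ and $h \mapsto x_{2k-1}(x',h)$ are smooth with nonvanishing derivative, so the change of variables $h = f(x',x_n)$ on the branch where $x_n = x_{2k}(x',h)$ (respectively $x_n = x_{2k-1}(x',h)$) converts $\int_0^\infty (\,\cdot\,) \, dh$ into $\int_{\partial I_k} (\,\cdot\,) |f_{x_n}(x)|\,dx_n$, using $|\partial x_{2k}/\partial h| = |f_{x_n}|^{-1}$ at those points. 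Doing this for both the $h$-integral (endpoints $x_{2k-1}, x_{2k}$) and the $u$-integral (endpoints $y_{2\ell-1}, y_{2\ell}$), the four terms $\pm \bar K_{\varepsilon}(x_{2k} - y_{2\ell}), \dots$ reassemble into a single integrand $\bar K_{\varepsilon,|x'-y'|}(x_n - y_n)$ evaluated at the four endpoint pairs, each carrying the Jacobian factor $|f_{x_n}(x)| |f_{y_n}(y)|$. Crucially, the $|h-u|^{p-2}$ weight must also be transported; here one uses that on $\partial I_k$ we have $h = f(x)$ and on $\partial J_\ell$ we have $u = f(y)$, so $|h-u|^{p-2} = |f(x)-f(y)|^{p-2}$ — but this is exactly the weight that the layer-cake identity in \eqref{eq:p-interaction} was built to remove, and reversing that identity (i.e. the $-p(p-1)\int\int |h-u|^{p-2}\chi\chi$ representation run backwards) is what collapses everything back to $\mathcal F_\varepsilon^p$ rather than $\mathcal I_\varepsilon^p$. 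I would phrase this last step as: the right-hand side of \eqref{eq:deriv level sets} is precisely $\frac{d}{d\tau}$ of the layer-cake representation \eqref{eq:p-interaction} of $\mathcal I_\varepsilon^p(f^\tau) = \mathcal F_\varepsilon^p(f^\tau) - C_\varepsilon\|f\|_{L^p}^p$, and since the $L^p$ norm is preserved under $f^\tau$ by \eqref{eq:preserves Lp}, this derivative equals $\frac{d}{d\tau}\mathcal F_\varepsilon^p(f^\tau)$.

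Next I would collect the sign bookkeeping. Each endpoint $x_n \in \partial I_k$ contributes $\sgn(x_{2k}+x_{2k-1})$, which by Notation \ref{E-notation} is $+1$ if $x_n \in E_+$, $0$ if $x_n \in E_0$, and $-1$ if $x_n \in E_-$; likewise for $y_n$. Thus the factor $\sgn(x_{2k}+x_{2k-1}) - \sgn(y_{2\ell}+y_{2\ell-1})$ is nonzero only when one of $x_n, y_n$ lies in $E_+$ and the other in $E_- \cup E_0$ (with opposite or mixed membership), and it equals $\pm 1$ or $\pm 2$ accordingly. Splitting the domain of integration $\partial U_{x'}^{f(x)} \times \partial U_{y'}^{f(y)}$ according to the $E_\pm, E_0$ decomposition and exploiting the symmetry $x \leftrightarrow y$ of the kernel (so that the "$x_n \in E_+$, $y_n \in E_-\cup E_0$" region and its transpose contribute equally), the value-$2$ case $(E_+,E_-)$ and the value-$1$ cases $(E_+,E_0)$ and $(E_0$-vs-$E_-)$, etc., combine into exactly the two integrals displayed in \eqref{eq:antisymmetry energy} with the overall constant $2p(p-1)$. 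The sign convention on $\bar K$ and on $f_{x_n}$ (note $f_{x_n}(x) < 0$ at a right endpoint and $> 0$ at a left endpoint, which is already absorbed in the algebra of the bracket) needs to be tracked carefully so that the final expression reads with $+$ on the $E_+ \times (E_-\cup E_0)$ term and $-$ on the $E_-\times(E_+\cup E_0)$ term as written.

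The main obstacle I anticipate is purely bookkeeping rather than conceptual: keeping the orientation signs straight through the co-area change of variables. The increments $\bar K_\varepsilon(x_{2k}-y_{2\ell}) - \bar K_\varepsilon(x_{2k}-y_{2\ell-1}) - \bar K_\varepsilon(x_{2k-1}-y_{2\ell}) + \bar K_\varepsilon(x_{2k-1}-y_{2\ell-1})$ is a "discrete mixed second difference" of $\bar K_\varepsilon$, and when one writes it as $\int_{x_{2k-1}}^{x_{2k}} \int_{y_{2\ell-1}}^{y_{2\ell}} K_\varepsilon''$-type object and then pushes the $dh\,du$ integrals onto the boundary, one must verify that the four boundary contributions each land with the correct sign matching $f_{x_n} f_{y_n}$ — in particular that the alternating $\pm$ in the bracket is precisely cancelled/reproduced by the alternating sign of $f_{x_n}$ at consecutive endpoints, leaving the clean product $\bar K_\varepsilon(x_n-y_n) f_{x_n}(x) f_{y_n}(y)$ with a uniform sign on each $E$-block. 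Once that is pinned down on one fibre, integrating in $x', y' \in \R^{n-1}$ is immediate, and the $\varepsilon \geq 0$ case (including $\varepsilon = 0$) follows since $\bar K_{0,|x'-y'|}$ is still well-defined as $K_0$ is locally integrable away from the singularity and the endpoints $x_n \neq y_n$ contribute the only boundary terms.
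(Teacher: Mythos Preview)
Your approach is exactly what the paper does: the paper's entire proof is the single sentence ``follows from undoing the change of variables,'' and your co-area argument with the sign bookkeeping via Notation~\ref{E-notation} (matching the alternating $\pm$ in the bracket against the sign of $f_{x_n}$ at right/left endpoints, then decomposing according to $E_+,E_0,E_-$) is precisely that.

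There is, however, one step that does not work as written. You correctly observe that under $h=f(x',x_n)$, $u=f(y',y_n)$ the weight $|h-u|^{p-2}$ becomes $|f(x)-f(y)|^{p-2}$. But your claim that ``reversing'' the layer-cake identity \eqref{eq:p-interaction} then removes this factor is not right: that identity converts \emph{bulk} quantities ($\chi_{U_{x'}^h}\chi_{U_{y'}^u}$ integrated over the solid sets) into $|f(x)-f(y)|^p-|f(x)|^p-|f(y)|^p$, whereas the derivative formula \eqref{eq:deriv level sets} is a sum over \emph{boundary} points $x_\alpha,y_\beta$. There is no mechanism here to collapse the $|f(x)-f(y)|^{p-2}$ weight --- after the change of variables it simply persists in the integrand. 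In other words, the direct change of variables yields
\[
-p(p-1)\int\!\!\int \bar K_{\varepsilon,|x'-y'|}(x_n-y_n)\,|f(x)-f(y)|^{p-2}\,f_{x_n}(x)\,f_{y_n}(y)\,\big(\sigma(x)-\sigma(y)\big)\,dx\,dy,
\]
where $\sigma = +1,0,-1$ on $E_+,E_0,E_-$, and this factor is \emph{not} cancelled by anything further. For $p=2$ the weight is identically $1$ and the stated formula follows; for general $p$ the displayed formula in the corollary should be read with the factor $|f(x)-f(y)|^{p-2}$ implicitly present (or as a slight omission in the statement). Drop the appeal to ``reversing the layer-cake'' and keep the weight explicit --- the rest of your argument then goes through cleanly.
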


One can view the expression on the right-hand side of  \eqref{eq:antisymmetry energy} as a quantification of asymmetry. 
Indeed, if $f$ is radially symmetric across $\{x_n=0\}$, then $E_+ = E_- = \emptyset$ and the derivative is zero. 
Note that if $|E_0| = 0$, then \eqref{eq:antisymmetry energy} can be written concisely as
 \begin{equation}\label{eq:No-E0}
\frac{d}{d\tau}\mathcal{F}_{\varepsilon}^p(f^\tau) \bigg|_{\tau=0}=4p (p-1) 
   \int_{E_+}\int_{E_-} \bar{K}_{\varepsilon,|x'-y'|}(x_n-y_n)f_{x_n}(x) f_{y_n}(y) \, dx.
 \end{equation}
 Unfortunately the integrand of this expression does not have a clear sign, and the sign only appears when we are considering the level set representation. We illustrate this in the following example.

 \begin{ex}\label{ex:zero-part 2}
Consider the good function $f:\R^2 \to \R$ given in Example \ref{ex:zero} and let $B_1(0,t) = \{(x',x_n) \in \R^{n-1} \times \R : |(x', x_n-t)| < 1\}$. 
From \eqref{eq:No-E0}, we have
\begin{align*}
&\frac{d}{d\tau}\mathcal{F}_{\varepsilon}^p(f^\tau) \bigg|_{\tau=0}\\
   &= 4p (p-1) \int_{B_1(0,2)} \int_{B_1(0,-2)} \bar{K}_{\varepsilon,|x'-y'|}(x_n-y_n)
   \frac{x_n+2}{|(x',x_n+2)|}
   \frac{y_n-2}{|(y',y_n-2)|} \, dx \, dy \\
   &= -4p (p-1)\int_{B_1(0,2)} \int_{B_1(0,-2)} 
   \int_0^{y_n-x_n} K_{\varepsilon,|x'-y'|}(r)
   \frac{x_n+2}{|(x',x_n+2)|}
   \frac{y_n-2}{|(y',y_n-2)|}  dr \, dx \, dy
\end{align*}
since $x_n<0<y_n$ for all $|(x', x_n+2)|< 1$, $|(y', y_n-2)|< 1$. 
Notice that the sign of the integrand in this expression is not positive for each fixed $(x', x_n)$, $(y', y_n)$, and $r$ in the domain of integration. 
The sign is instead observed by studying the endpoints of the level sets as done in the proof of Proposition~\ref{lem:deriv for good}. 
\end{ex}

\subsubsection{Proof of Proposition~\ref{lem:deriv for good}}
We begin with an expression for the fractional energy in terms of its level sets.  
\begin{lem}\label{lem:energy for good}
Assume that $f$ is a non-negative, good function. Then, 
\begin{align*}
\mathcal{F}_{\varepsilon}^p(f)- C_\varepsilon \|f\|^p_{L^p(\R^n)}
&= p(p-1) \int_{\R^{n-1}} \int_{\R^{n-1}} \int_{0}^{\infty} \int_0^{\infty}\sum_{\ell=1}^{m_u} \sum_{k=1}^{m_h}|h-u|^{p-2}\\ 
	&\quad  \bigg[
    \bar{\bar{K}}_{\varepsilon, |x'-y'|}(x_{2k}-y_{2\ell})
    - \bar{\bar{K}}_{\varepsilon, |x'-y'|}(x_{2k}-y_{2\ell-1})\\
    &\quad  -\bar{\bar{K}}_{\varepsilon, |x'-y'|}(x_{2k-1}-y_{2\ell})
    +\bar{\bar{K}}_{\varepsilon, |x'-y'|}(x_{2k-1}-y_{2\ell-1})
    \bigg]\, dh \, du \, dx' \, dy'
\end{align*}
where $f(x',x_i(h)) = h$, $f(y',y_{i}(u)) = u$ for $i=1,\dots,2m$ and $m_h = m(x',h)$, $m_u = m(y',u)$.
\end{lem}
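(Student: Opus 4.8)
The plan is to start from the double layer-cake representation \eqref{eq:p-interaction}, which — recalling \eqref{eq:Iep}--\eqref{eq:Ce} — already expresses $\mathcal{F}_{\varepsilon}^p(f) - C_\varepsilon\|f\|_{L^p(\R^n)}^p = \mathcal{I}_{\varepsilon}^p(f)$ as $-p(p-1)$ times an integral over $(x',y',h,u)\in\R^{2(n-1)}\times\R^2_+$ of the weight $|h-u|^{p-2}$ against the inner one-dimensional double integral $\int_{\R^2}\chi_{U_{x'}^h}(x_n)\chi_{U_{y'}^u}(y_n)K_{\varepsilon,|x'-y'|}(x_n-y_n)\,dx_n\,dy_n$. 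Since $f$ is a good function, for every $x'$ and all but finitely many $h$ the section $U_{x'}^h$ is a finite disjoint union of open intervals $\bigcup_{k=1}^{m_h}(x_{2k-1},x_{2k})$ (Definition \ref{defn:good}(1) and Notation \ref{notation:xs}), and similarly $U_{y'}^u = \bigcup_{\ell=1}^{m_u}(y_{2\ell-1},y_{2\ell})$. Hence the inner integral splits into the finite sum $\sum_{k,\ell}\int_{x_{2k-1}}^{x_{2k}}\int_{y_{2\ell-1}}^{y_{2\ell}}K_{\varepsilon,|x'-y'|}(x_n-y_n)\,dy_n\,dx_n$, so the entire statement reduces to evaluating one such rectangular integral in closed form.

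For a single rectangle I would carry out two successive one-variable changes of variables. First, substituting $t = x_n - y_n$ in the inner $y_n$-integral turns $\int_{y_{2\ell-1}}^{y_{2\ell}}K_{\varepsilon,|x'-y'|}(x_n - y_n)\,dy_n$ into $\bar K_{\varepsilon,|x'-y'|}(x_n - y_{2\ell-1}) - \bar K_{\varepsilon,|x'-y'|}(x_n - y_{2\ell})$, using the first antiderivative from \eqref{eq:antideriv}. Then substituting $\sigma = x_n - c$ in the remaining $x_n$-integral and using the second antiderivative gives $\bar{\bar K}_{\varepsilon,|x'-y'|}(x_{2k}-c) - \bar{\bar K}_{\varepsilon,|x'-y'|}(x_{2k-1}-c)$ for each $c\in\{y_{2\ell-1},y_{2\ell}\}$, so the rectangular integral equals the alternating four-term combination $\bar{\bar K}(x_{2k}-y_{2\ell-1}) - \bar{\bar K}(x_{2k-1}-y_{2\ell-1}) - \bar{\bar K}(x_{2k}-y_{2\ell}) + \bar{\bar K}(x_{2k-1}-y_{2\ell})$, with $\bar{\bar K} = \bar{\bar K}_{\varepsilon,|x'-y'|}$. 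Inserting this back and absorbing the overall minus sign from the $-p(p-1)$ prefactor reverses the four signs, producing exactly the bracketed expression in the statement — the $(x_{2k}-y_{2\ell})$ and $(x_{2k-1}-y_{2\ell-1})$ terms with a plus sign and the two mixed terms with a minus sign. Summing over $k,\ell$ and integrating back in $(h,u,x',y')$ yields the claimed identity. This is precisely the form that makes the symmetrization tractable: replacing $f$ by $f^\tau$ only moves the endpoints $x_i\mapsto x_i^\tau$, $y_i\mapsto y_i^\tau$ according to the ODE \eqref{eq:ode}, after which $\frac{d}{d\tau}\mathcal{F}_\varepsilon^p(f^\tau)$ follows by the chain rule since $\bar{\bar K}'=\bar K$, which is Proposition \ref{lem:deriv for good}.

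The work is essentially bookkeeping rather than a genuine obstacle, but care is needed in two places. First, one must justify Fubini's theorem in rearranging the $(x_n,y_n)$-, $(h,u)$- and $(x',y')$-integrations: for $\varepsilon>0$ this is immediate since $K_{\varepsilon,\ell}$ is bounded, $f$ has compact support (so $\bar K_{\varepsilon,\ell}$ and $\bar{\bar K}_{\varepsilon,\ell}$ are finite and the relevant arguments bounded), the $h$- and $u$-integrals range over a bounded set, and the only singularity, the factor $|h-u|^{p-2}$ present when $1<p<2$, is locally integrable. Second, the set of exceptional heights from Definition \ref{defn:good}(1) where the section fails to be a union of $2m_h$ intervals, together with the diagonal $\{x'=y'\}$, is Lebesgue-null and hence does not affect any of the integrals. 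For $\varepsilon=0$ the identity is then obtained by letting $\varepsilon\to0^+$: $\mathcal{F}^p_\varepsilon(f)\nearrow[f]^p_{W^{s,p}(\R^n)}$ while $C_\varepsilon\to\infty$, and on the right-hand side each summand is $\le0$ because the rectangular integral is non-negative, so both sides converge monotonically — possibly to $-\infty$ — consistent with reading the $\varepsilon=0$ case through \eqref{eq:Iep}.
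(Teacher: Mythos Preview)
Your proposal is correct and follows essentially the same route as the paper: both start from the layer-cake identity \eqref{eq:p-interaction}, use the good-function structure to write each section as a finite union of intervals, and then evaluate the resulting rectangular integrals of $K_{\varepsilon,|x'-y'|}$ via two successive antidifferentiations to obtain the four-term $\bar{\bar K}$ expression. The only cosmetic differences are that the paper integrates in $x_n$ first and then $y_n$ (you do the reverse), and you add explicit Fubini and null-set justifications plus an $\varepsilon\to0^+$ remark that the paper omits here.
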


\begin{proof}
As in \eqref{eq:p-interaction}, we begin by writing
\begin{align*}
\mathcal{F}^p_{\varepsilon}(f)- C_\varepsilon \|f\|^p_{L^p(\R^n)}
    &= -p(p-1)\int_{\R^{2(n-1)}}  \int_{\R^2_+} |h-u|^{p-2} \\
    &\quad\int_{\R^{2}}\chi_{U_{x'}^h}(x_n) \chi_{U_{y'}^h}(y_n)
        K_{\varepsilon,|x'-y'|}(x_n-y_n)  dx_n \, dy_n \, dh \, du \, dx' \, dy'.
\end{align*}
Fix $x',y' \in \R^{n-1}$ and let $h,u>0$. Since $f$ is a good function, there are at most $m_h$ solutions to $f(x',x_n) = h$ which we denote by $x_{2k-1}\leq x_{2k}$, $k=1,\dots, m_h$. 
We similarly denote the solutions to $f(y', y_n) = u$ by $y_{2\ell-1}\leq y_{2\ell}$, $\ell = 1,\dots m_u$. 
Then we have
\begin{align*}
\int_{\R^{2}}\chi_{U_{x'}^h}(x_n) &\chi_{U_{y'}^h}(y_n)
        K_{\varepsilon,|x'-y'|}(x_n-y_n)  \, dx_n \, dy_n\\
    &= \sum_{\ell=1}^{m_u} \sum_{k=1}^{m_h} \int_{y_{2\ell-1}}^{y_{2\ell}} \int_{x_{2k-1}}^{x_{2k}} K_{\varepsilon, |x'-y'|}(x_n-y_n) \, dx_n \, dy_n\\
    &=  \sum_{\ell=1}^{m_u} \sum_{k=1}^{m_h} \int_{y_{2\ell-1}}^{y_{2\ell}} \big[
    \bar{K}_{\varepsilon, |x'-y'|}(x_{2k}-y_n)
    -\bar{K}_{\varepsilon, |x'-y'|}(x_{2k-1}-y_n)
    \big]\, dy_n\\
    &= - \sum_{\ell=1}^{m_u} \sum_{k=1}^{m_h} \big[
    \bar{\bar{K}}_{\varepsilon, |x'-y'|}(x_{2k}-y_{2\ell})
    - \bar{\bar{K}}_{\varepsilon, |x'-y'|}(x_{2k}-y_{2\ell-1})\\
    &\quad -\bar{\bar{K}}_{\varepsilon, |x'-y'|}(x_{2k-1}-y_{2\ell})
    +\bar{\bar{K}}_{\varepsilon, |x'-y'|}(x_{2k-1}-y_{2\ell-1})
    \big],
\end{align*}
as desired.
\end{proof}

We are now prepared to present the proof of Proposition \ref{lem:deriv for good}. 

\begin{proof}[Proof of Proposition~\ref{lem:deriv for good}]
First fix $\varepsilon>0$. 
Since $f$ is a good function, we know that $f^\tau$ is also a good function. 
By Lemma \ref{lem:energy for good} for $f^\tau$ and applying \eqref{eq:preserves Lp}, we have
\begin{align*}
\mathcal{F}_{\varepsilon}^p(f^\tau)
= C_\varepsilon \|f\|^p_{L^p(\R^n)}
    &+p(p-1) \int_{\R^{n-1}} \int_{\R^{n-1}} \int_{0}^{\infty} \int_0^{\infty}\sum_{\ell=1}^{m_u} \sum_{k=1}^{m_h}|h-u|^{p-2}\\ 
	&\quad  \bigg[
    \bar{\bar{K}}_{\varepsilon, |x'-y'|}(x_{2k}^\tau-y_{2\ell}^\tau)
    - \bar{\bar{K}}_{\varepsilon, |x'-y'|}(x_{2k}^\tau-y_{2\ell-1}^\tau)\\
    &\quad  -\bar{\bar{K}}_{\varepsilon, |x'-y'|}(x_{2k-1}^\tau-y_{2\ell}^\tau)
    +\bar{\bar{K}}_{\varepsilon, |x'-y'|}(x_{2k-1}^\tau-y_{2\ell-1}^\tau)
    \bigg]\, dh \, du \, dx' \, dy'
\end{align*}
From the Definition \ref{defn:cont set}(1), we have
\[
\frac{d}{d\tau}(x_i^\tau - y_j^\tau)
    = - (\sgn(x_{2k} + x_{2k-1}) - \sgn(y_{2\ell}+ y_{2\ell-1})), \quad \hbox{for}~i = 2k,2k-1,~j=2\ell,2\ell-1. 
\]
Therefore, taking the derivative of $\mathcal{F}^p_{\eps}(f^\tau)$ with respect to $\tau$ gives
\begin{align*}
\frac{d}{d\tau}\mathcal{F}_{\varepsilon}^p(f^\tau)
&= p(p-1) \int_{\R^{2(n-1)}} \int_{0}^{\infty} \int_0^{\infty} \sum_{k=1}^m \sum_{\ell=1}^m|h-u|^{p-2}\\ 
	&\quad  \frac{d}{d\tau}\bigg[ 
	  \bar{\bar{K}}_{\varepsilon, |x'-y'|}( x_{2k}^\tau-y_{2\ell}^\tau)
	  -  \bar{\bar{K}}_{\varepsilon, |x'-y'|}( x_{2k}^\tau-y_{2\ell-1}^\tau)\\
	&\quad  -  \bar{\bar{K}}_{\varepsilon, |x'-y'|}( x_{2k-1}^\tau-y_{2\ell}^\tau)
	+   \bar{\bar{K}}_{\varepsilon, |x'-y'|}( x_{2k-1}^\tau-y_{2\ell-1}^\tau)
	\bigg] \, dh \, du \, dx' \, dy'\\
&= -p(p-1)\int_{\R^{2(n-1)}} \int_{0}^{\infty} \int_0^{\infty} \sum_{k=1}^m \sum_{\ell=1}^m|h-u|^{p-2}\\ 
	&\quad  \bigg[ 
	 \bar{K}_{\varepsilon, |x'-y'|}( x_{2k}^\tau-y_{2\ell}^\tau)
	  - \bar{K}_{\varepsilon, |x'-y'|}( x_{2k}^\tau-y_{2\ell-1}^\tau)\\
	&\quad  - \bar{K}_{\varepsilon, |x'-y'|}( x_{2k-1}^\tau-y_{2\ell}^\tau)
	+  \bar{K}_{\varepsilon, |x'-y'|}( x_{2k-1}^\tau-y_{2\ell-1}^\tau)
	\bigg] \\
 &\quad (\sgn(x_{2k}+x_{2k-1}) - \sgn(y_{2\ell}+y_{2\ell-1})) \, dh \, du \, dx' \, dy'.
\end{align*}
Evaluating at $\tau=0$, we obtain \eqref{eq:deriv level sets}. 

We next show that the integrand in \eqref{eq:deriv level sets} is positive for each fixed $x', y', h,u, k,\ell$. For this, we use the simplified notation 
\begin{equation}\label{eq:+-notation}
x_+ = x_{2k}, \quad x_-= x_{2k-1},\quad y_+ = y_{2\ell}, \quad y_-= y_{2\ell-1}
\end{equation}
and $K_{\varepsilon}(r) = K_{\varepsilon,\abs{x'-y'}}(r)$.
Assume, without loss of generality, that 
\[
(x_{+}+x_-)-(y_{+}+y_{-}) > 0.
\]
Then, it is enough to check that
\begin{equation}\label{eq:K-claim}
\big[\bar{K}_{\varepsilon}( x_{+}-y_{+}) 
	 -\bar{K}_{\varepsilon}( x_{+}-y_{-}) -\bar{K}_{\varepsilon}( x_{-}-y_{+}) 
	 +\bar{K}_{\varepsilon}( x_{-}-y_{-}) \big] >0.
\end{equation}
We break into three cases based on the interaction of the intervals $(x_-,x_+)$ and $(y_-,y_+)$. 
In the following, we will use the antisymmetry of $\bar{K}_{\varepsilon}(r)$ and that $K'_{\varepsilon}(r)<0$ for $r>0$. 

\medskip

{\bf Case 1}. Embedded intervals: $(y_-,y_+) \subset (x_-,x_+)$.

\medskip

\noindent
Since $x_- < y_-<y_+<x_+$, we have
\begin{align*}
\bar{K}_{\varepsilon}( x_{+}-y_{+}) -&\bar{K}_{\varepsilon}( x_{+}-y_{-}) 
	-\bar{K}_{\varepsilon}( x_{-}-y_{+}) +\bar{K}_{\varepsilon}( x_{-}-y_{-})\\
&= \int_{-(x_{-}-y_{-})}^{ x_{+}-y_{+}} K_{\varepsilon}(r) \, dr 
	- \int_{-(x_{-}-y_{+})}^{  x_{+}-y_{-}} K_{\varepsilon}(r) \, dr\\
&= \int_{0}^{ (x_{+}+x_-)-(y_{+}+y_{-})} \big[K_{\varepsilon}(r+(y_--x_{-})) - K_{\varepsilon}(r+(y_+-x_{-})) \big] \, dr\\
&= \int_{0}^{ (x_{+}+x_-)-(y_{+}+y_{-})}  \int_{y_+-x_-}^{y_--x_-} K'_{\varepsilon}(r+\xi) \, d \xi \,  dr\\
&=- \int_{0}^{ (x_{+}+x_-)-(y_{+}+y_{-})}  \int_{0}^{y_+- y_-} K'_{\varepsilon}(r+\xi+y_--x_-) \, d\xi \, dr \\
&{\geq \big[(x_++x_-)-(y_++y_-)\big] (y_+-y_-) \min_{y_--x_- < r < x_+-y_-} |K'_{\varepsilon}(r)|}
> 0.
\end{align*}

\medskip

{\bf Case 2}. Separated intervals: $(y_-,y_+) \cap (x_-,x_+) = \emptyset$.

\medskip

\noindent
Since $x_- < y_-<y_+<x_+$, we have
\begin{align*}
\bar{K}_{\varepsilon}( x_{+}-y_{+}) -&\bar{K}_{\varepsilon}( x_{+}-y_{-}) 
	-\bar{K}_{\varepsilon}( x_{-}-y_{+}) +\bar{K}_{\varepsilon}( x_{-}-y_{-})\\
&= \int_{x_{-}-y_{+}}^{x_{+}-y_{+}} K_{\varepsilon}(r) \, dr - \int_{x_--y_-}^{x_{+}-y_{-}} K_{\varepsilon}(r) \, dr\\
&= \int_{0}^{x_+-x_-} \big[ K_{\varepsilon}(r+x_{-}-y_{+})- K(r+x_--y_-) \big]\, dr\\
&= \int_{0}^{x_+-x_-} \int_{x_--y_-}^{x_{-}-y_{+}} K_{\varepsilon}'(r+\xi) \, d \xi \, dr\\
&= -\int_{0}^{x_+-x_-} \int^{y_+-y_-}_{0} K_{\varepsilon}'(r+\xi+x_{-}-y_{+}) \, d \xi \, dr \\
&{\geq (x_+-x_-) (y_+-y_-) \min_{x_--y_+ < r < x_+-y_-} |K_{\varepsilon}'(r)|}> 0.
\end{align*}

\medskip

{\bf Case 3}. Overlapping intervals: $(y_-,y_+) \not\subset (x_-,x_+)$ and $(y_-,y_+) \cap (x_-,x_+) \not=\emptyset$. 

\medskip

\noindent
Since $y_-<x_-<y_+<x_+$, we have
\[
x_+-y_- = (x_+-y_+) + (y_+-x_-)+(x_--y_-),
\]
so that
\begin{align*}
\bar{K}_{\varepsilon}&( x_{+}-y_{+}) -\bar{K}_{\varepsilon}( x_{+}-y_{-}) 
	-\bar{K}_{\varepsilon}( x_{-}-y_{+}) +\bar{K}_{\varepsilon}( x_{-}-y_{-})\\
&= \int_0^{x_+-y_+} K_{\varepsilon}(r) \, dr 
	+\int_0^{y_+-x_-} K_{\varepsilon}(r) \, dr 
	+  \int_0^{x_--y_-} K_{\varepsilon}(r) \, dr
	 -\int_0^{x_+-y_-} K_{\varepsilon}(r) \, dr \\
&= \int_0^{y_+-x_-} K_{\varepsilon}(r) \, dr 
	+  \int_0^{x_--y_-} K_{\varepsilon}(r) \, dr
	-\int_{x_+-y_+}^{x_+-y_-} K_{\varepsilon}(r) \, dr \\
&= \int_0^{y_+-x_-} K(r) \, dr 
	+  \int_0^{x_--y_-} K_{\varepsilon}(r) \, dr
	-\int_{0}^{y_+-y_-} K_{\varepsilon}(r+x_+-y_+) \, dr \\
&= \int_0^{y_+-x_-} K_{\varepsilon}(r) \, dr 
	+  \int_0^{x_--y_-} K_{\varepsilon}(r) \, dr\\
    &\quad-\int_{0}^{x_--y_-} K_{\varepsilon}(r+x_+-y_+) \, dr- \int_{x_--y_-}^{y_+-y_-} K_{\varepsilon}(r+x_+-y_+) \, dr \\
&= \int_0^{y_+-x_-} K_{\varepsilon}(r) \, dr 
	+  \int_0^{x_--y_-} K_{\varepsilon}(r) \, dr\\
&\quad
	-\int_{0}^{x_--y_-} K_{\varepsilon}(r+x_+-y_+) \, dr- \int_{0}^{y_+-x_-} K_{\varepsilon}(r+x_+-y_++x_--y_-) \, dr \\
&= -\bigg[\int_0^{y_+-x_-} \int_0^{(x_++x_-)-(y_++y_-)} K_{\varepsilon}'(r+\xi) \, d\xi \, dr
	+ \int_0^{x_--y_-} \int_0^{x_+-y_+} K_{\varepsilon}'(r+\xi) \, d\xi \, dr \bigg]\\
&> \big[(x_++x_-)-(y_++y_-)\big] (y_+-x_-) \min_{0 < r < x_+-y_-} |K_{\varepsilon}'(r)|\\
&\quad + (x_--y_-) (x_+-y_+) \min_{0 < r < (x_++x_-)-(y_+-y_-)} |K_{\varepsilon}'(r)|
=0.
\end{align*}
We have established \eqref{eq:K-claim} in all possible cases. 
Recalling \eqref{eq:K-eps-prime} and a further analysis of the final expressions in Cases 1-3 shows that they are monotone as $\eps\to 0^+$. Moreover,  $\mathcal{F}_\eps^p(f)\to \mathcal{F}^p(f)$ as $\eps \to 0^+$.  Hence, in this case, we can conclude that 
$$
\lim_{\eps \to 0^+} \frac{d}{d\tau}\mathcal{F}_\eps^p(f^\tau)\bigg|_{\tau=0}= \frac{d}{d\tau}\mathcal{F}^p(f^\tau)\bigg|_{\tau=0},
$$
and that the sign of the derivative is preserved.
\end{proof}

\section{Interpolation between symmetric decreasing functions}\label{sec:interp}
Let us now review the interpolation between symmetric decreasing functions of unit mass introduced in \cite{delgadino2022uniqueness}. 
Consider a nonnegative, symmetric decreasing function $f \in L^1(\R^n)\cap C(\R^n)$ with mass 1. 
The associated height function $H:(0,1) \to (0,\|f\|_{L^{\infty}(\R^n)})$ is defined implicitly by
\begin{equation}\label{eq:defH}
    \int_{\R^n} \min \{ f(x), H(m)\} \, dx = m, \quad \hbox{for}~m \in (0,1).
\end{equation}
That is, $H(m)$ is the unique value such that the mass under the plane $f(x) = H(m)$ has mass $m \in (0,1)$, see Figure \ref{fig:height}.
The height function $H$ satisfies the following properties. 

\begin{figure}[ht]
\begin{tikzpicture}[scale=3]
  \fill[gray!20] (-1,0) -- plot[domain=-1:-.57] (\x, {(1 - abs(\x^2))^(1.25)}) |- (-.57,0) -- cycle;
    \fill[gray!20] (1,0) -- plot[domain=1:.57] (\x, {(1 - abs(\x^2))^(1.25)}) |- (.57,0) -- cycle;
      \fill[gray!20] (-.58,0) -- (-.58,.6)--(.58,.6)-- (.58,0) -- cycle;   
  \draw[blue, domain=-1:1, samples=100, line width=1pt] plot (\x, {(1 - abs(\x^2))^(1.25)});
  \draw[cyan, line width=1pt, dashed] (-.57,0.6) -- (.57,0.6);
  	\draw (1,.62) node {\small $f(x) = H(m)$};
    \draw[cyan, line width=1pt, dashed] (.57,0) -- (.57,0.6);
  	\draw (.57,-.1) node {\small $x$};
  \draw[line width=.75pt, <->] (-1.25,0) -- (1.25,0);
  \draw[line width=.75pt, <->] (0,-0.15) -- (0,1.15);
  \draw (0,.3) node {\small mass $=m$};  
\end{tikzpicture}
\caption{The height function $H(m)$ associated to $f(x)$}
\label{fig:height}
\end{figure}
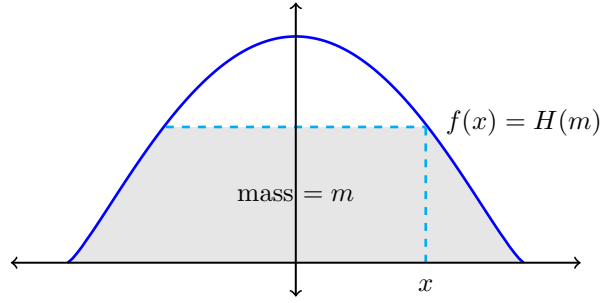

\begin{lem}[See \cite{delgadino2022uniqueness}] \label{lem:properties}
Let $f\in L^1(\R^n)\cap C(\R^n)$ and $H$ its associated height function defined in \eqref{eq:defH}, then
\begin{enumerate}
    \item $H = H(m) \in (0,\|f\|_{L^{\infty}(\R^n)})$ is continuous, strictly increasing, and convex on $(0,1)$. 
    \item If in addition we assume that $f$ has compact support and is strictly decreasing in the radial variable, then
    $$
    \lim_{m\to 0^+}H'(m)=|\{f>0\}|^{-1}\qquad\mbox{and}\qquad\lim_{m\to 1^-}H'(m)=+\infty.
    $$
\item The function $H$ fully determines $f$ as
\begin{equation}\label{h_rho}
f(x)=\int_0^1\chi_{(c_n H'(m))^{-1/n}}(x) H'(m)\;dm, \quad \hbox{where}~\chi_r(x) := \chi_{B(0,r)}(x).
\end{equation}

\item For almost every $m \in (0,1)$, it holds that
\begin{equation}\label{h''}
 \displaystyle- \Phi'\big((c_n H'(m))^{-1/n}\big)=nc_n^{1/n}\frac{\big(H'(m)\big)^{2+\frac{1}{n}}}{H''(m)}\ge 0
\end{equation}
where $\Phi: [0,\infty) \to [0,\infty)$ satisfies $f(x) = \Phi(|x|)$. 
\end{enumerate}

\end{lem}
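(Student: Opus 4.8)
The plan is to reduce everything to elementary facts about the distribution function of $f$ and the interplay between a concave function and its inverse. Write $f(x) = \Phi(|x|)$ with $\Phi$ non-increasing, set $\rho(t) := \sup\{r \ge 0 : \Phi(r) > t\}$ so that $\{f > t\} = B(0,\rho(t))$ and $\mu(t) := |\{f>t\}| = c_n \rho(t)^n$ is non-increasing in $t$, and observe that the layer-cake formula gives
\[
\int_{\R^n} \min\{f(x),H\}\, dx = \int_0^H \mu(t)\,dt =: G(H), \qquad 0 < H < \|f\|_{L^{\infty}(\R^n)}.
\]
Thus $H = H(m)$ is characterized by $G(H(m)) = m$. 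Since $\mu > 0$ on $(0,\|f\|_{L^{\infty}(\R^n)})$ and is non-increasing, $G$ is a strictly increasing, concave bijection onto $(0,1)$ (with $G(0^+)=0$ and $G(\|f\|_{L^{\infty}(\R^n)}^-) = \int_{\R^n} f = 1$), so its inverse $H = G^{-1}$ is continuous, strictly increasing and convex; this is (1). Because $H$ is convex and bounded on $(0,1)$ one has $\int_0^1 H' = \|f\|_{L^{\infty}(\R^n)} < \infty$, so $H$ is absolutely continuous on $(0,1)$ and $H'(m) = 1/G'(H(m))$ for a.e.\ $m$, where $G'(H)=\mu(H)$ at every continuity point of $\mu$.

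For (2), the extra hypotheses make $\mu$ continuous and strictly decreasing with $\mu(0^+) = |\{f>0\}|$ and $\mu(\|f\|_{L^{\infty}(\R^n)}^-) = |\{f = \|f\|_{L^{\infty}(\R^n)}\}| = 0$. Hence $G \in C^1$, $H'(m) = 1/\mu(H(m))$ for every $m$, and letting $m \to 0^+$ (so $H(m) \to 0^+$) and $m \to 1^-$ (so $H(m) \to \|f\|_{L^{\infty}(\R^n)}^-$) yields the two stated limits. For (3), apply the layer-cake formula to $f$ itself,
\[
f(x) = \int_0^{\|f\|_{L^{\infty}(\R^n)}} \chi_{\{f>t\}}(x)\, dt = \int_0^{\|f\|_{L^{\infty}(\R^n)}} \chi_{\rho(t)}(x)\, dt,
\]
and perform the substitution $t = H(m)$, which is legitimate since $H$ is an absolutely continuous, strictly increasing bijection of $(0,1)$ onto $(0,\|f\|_{L^{\infty}(\R^n)})$; this gives $f(x) = \int_0^1 \chi_{\rho(H(m))}(x)\, H'(m)\, dm$. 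Finally, for a.e.\ $m$ one has $c_n \rho(H(m))^n = \mu(H(m)) = G'(H(m)) = 1/H'(m)$, i.e.\ $\rho(H(m)) = (c_n H'(m))^{-1/n}$, which is (3).

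For (4), put $r(m) := \rho(H(m)) = (c_n H'(m))^{-1/n}$; continuity of $f$ forces $\Phi(\rho(t)) = t$, hence $\Phi(r(m)) = H(m)$ for all $m$. Differentiating this identity — for a.e.\ $m$, where $H$ is twice differentiable (being convex) and $\Phi$ is differentiable at $r(m)$ — gives $\Phi'(r(m))\, r'(m) = H'(m)$, while differentiating the explicit formula for $r(m)$ gives $r'(m) = -\tfrac1n c_n^{-1/n}(H'(m))^{-(n+1)/n}H''(m)$. Combining the two, and using $1 + (n+1)/n = 2 + 1/n$, yields
\[
-\Phi'(r(m)) = n c_n^{1/n}\, \frac{(H'(m))^{2+1/n}}{H''(m)},
\]
which is non-negative since $\Phi$ is non-increasing; this is (4).

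The main obstacle is the regularity bookkeeping in (4): the chain rule $\Phi'(r(m))\, r'(m) = H'(m)$ a.e.\ presupposes that $\Phi$ is differentiable at $r(m)$ for a.e.\ $m$, i.e.\ that $m \mapsto r(m)$ sends the (Lebesgue-null) non-differentiability set of the monotone function $\Phi$ to a null set. This is immediate when $f$ is strictly radially decreasing with a locally Lipschitz (or good-function) profile — precisely the setting relevant to Theorem~\ref{thm:uniqueness} — and in general one reduces to that case by approximation; I would carry out this reduction explicitly, following \cite{delgadino2022uniqueness}, since every other step is a direct consequence of the layer-cake representation and the inverse-function relation $H = G^{-1}$.
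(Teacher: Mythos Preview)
Your argument is correct and follows the same route as the paper (and the cited reference \cite{delgadino2022uniqueness}): parts (1)--(3) via the layer-cake identity $G(H)=\int_0^H|\{f>t\}|\,dt$ and the inverse-function relation $H=G^{-1}$, and part (4) by differentiating $\Phi\big((c_nH'(m))^{-1/n}\big)=H(m)$, which is exactly the identity the paper singles out. Your flagged regularity caveat for the chain rule in (4) is appropriate and matches the level of care in the original source.
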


\begin{proof}
Properties (1)-(3) are established in \cite{delgadino2022uniqueness}*{Lemma 2.1}. Property (4) is stablished in \cite{delgadino2022uniqueness}*{Lemma 4.2} follows from differentiating the identity
$$
\Phi((c_n H'(m))^{-1/n})=H(m).
$$
\end{proof}

Now, consider two symmetric decreasing functions $f_0, f_1 \in L^1(\R^n)\cap C(\R^n)$, both with unit mass and let $H_0, H_1$ denote their associated height functions. 
Let $H_t$, $0 \leq t \leq 1$, be a linear interpolation between $H_0$ and $H_1$:
\begin{equation}\label{eq:interpolation}
H_t(m) = (1-t)H_0(m) + tH_1(m).
\end{equation}
By Lemma \ref{lem:properties}(3), the height function $H_t$  uniquely determines a radially decreasing function $f_t \in L^1(\R^n)\cap C(\R^n)$ with unit mass. 
In particular, $\{f_t\}_{t\in[0,1]}$ is an interpolation between $f_0$ and $f_1$. 

It is shown in \cite{delgadino2022uniqueness}*{Proposition 2.3} that the $p$-th power of the $L^p$ norms are convex under this interpolation if and only if $p \geq 2$. More precisely, 
\begin{equation}\label{eq:Lp-height}
\frac{d^2}{dt^2} \|f_t\|_{L^p(\R^n)}^p
    \,\begin{cases}
    <0 & \hbox{if}~1 \leq p < 2\\
    =0 & \hbox{if}~p=2\\
    >0 & \hbox{if}~p>2
    \end{cases} \quad\quad \hbox{for}~0 < t < 1. 
\end{equation}
In our next result, we determine when the $W^{1,p}$ seminorms are convex under the height function interpolation. 

\begin{prop}\label{prop:interW1p}
Fix $1 < p < \infty$. 
Let $f_0,\,f_1 \in W^{1,p}(\R^n)$ be two distinct, non-negative, symmetric decreasing functions with unit mass and let $f_t$, $0 \leq t \leq 1$, be the height function interpolation between $f_0$ and $f_1$. 
It holds that
\[
t \mapsto [f_t]_{W^{1,p}(\R^n)}^p = \int_{\R^n} |\nabla f_t|^p \, dx
\]
is convex if $p \geq 2n/(n+1)$.

Consequently, the following a priori estimate on the interpolation holds when $p \geq 2n/(n+1)$
\begin{equation}\label{eq:apriorinterpolation}
    \max_{t\in[0,1]}[f_t]_{W^{1,p}(\R^n)}^p\le \max\left\{[f_0]_{W^{1,p}(\R^n)}^p,[f_1]_{W^{1,p}(\R^n)}^p\right\}.
\end{equation}
\end{prop}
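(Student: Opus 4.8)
The plan is to convert the claim into a pointwise-in-$m$ convexity statement by deriving an explicit formula for $[f_t]_{W^{1,p}(\R^n)}^p$ in terms of the height function $H_t$. First I would reduce to the case where $f_0,f_1$ are compactly supported and strictly radially decreasing (so Lemma~\ref{lem:properties}(2),(4) applies and $H_0'',H_1''>0$ a.e.), the general case following by approximation. Writing $f_t(x)=\Phi_t(|x|)$ and letting $R_t(h)$ denote the radius of the ball $\{f_t>h\}$, the coarea formula gives
\begin{equation*}
[f_t]_{W^{1,p}(\R^n)}^p=\int_{\R^n}|\nabla f_t|^p\,dx=nc_n\int_0^{\|f_t\|_{L^{\infty}(\R^n)}}|\Phi_t'(R_t(h))|^{p-1}R_t(h)^{n-1}\,dh.
\end{equation*}
Changing variables $h=H_t(m)$, using $R_t(H_t(m))=(c_nH_t'(m))^{-1/n}$ (equivalently $|\{f_t>H_t(m)\}|=1/H_t'(m)$) together with the identity \eqref{h''} for $-\Phi_t'$, all the powers of $c_n$ and $n$ collapse and one obtains
\begin{equation}\label{eq:W1p-height}
[f_t]_{W^{1,p}(\R^n)}^p=n^pc_n^{p/n}\int_0^1\frac{\bigl(H_t'(m)\bigr)^{2p-2+p/n}}{\bigl(H_t''(m)\bigr)^{p-1}}\,dm.
\end{equation}

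Next, since $H_t=(1-t)H_0+tH_1$, both $H_t'$ and $H_t''$ are affine in $t$, and $H_t''\ge0$ by convexity (Lemma~\ref{lem:properties}(1)). Hence, by \eqref{eq:W1p-height}, it suffices to prove that
\begin{equation*}
G(a,b):=\frac{a^{\alpha}}{b^{\beta}},\qquad \alpha=2p-2+\tfrac pn,\quad\beta=p-1,
\end{equation*}
is jointly convex on $(0,\infty)^2$ (extended by $+\infty$ on $\{b=0<a\}$): then $t\mapsto G(H_t'(m),H_t''(m))$ is convex as the composition of a convex function with an affine map, and integration in $m$ preserves convexity. This argument also yields $[f_t]_{W^{1,p}(\R^n)}^p\le(1-t)[f_0]_{W^{1,p}(\R^n)}^p+t[f_1]_{W^{1,p}(\R^n)}^p$, so in particular the interpolant has finite seminorm.

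The joint convexity of $G$ is a short Hessian computation: on $(0,\infty)^2$ one has $G_{aa}=\alpha(\alpha-1)a^{\alpha-2}b^{-\beta}$ and $G_{aa}G_{bb}-G_{ab}^2=\alpha\beta(\alpha-\beta-1)a^{2\alpha-2}b^{-2\beta-2}$, so the Hessian is positive semidefinite exactly when $\alpha\ge\beta+1$ (which in turn forces $G_{aa}\ge0$, since then $\alpha\ge1$). As $\alpha-\beta-1=p-2+\tfrac pn=\tfrac{(n+1)p-2n}{n}$, this is precisely the condition $p\ge\tfrac{2n}{n+1}$, giving the asserted convexity in Proposition~\ref{prop:interW1p}. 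The a priori bound \eqref{eq:apriorinterpolation} then follows from the elementary fact that a convex function on $[0,1]$ attains its maximum at an endpoint.

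I expect the main obstacle to be the rigorous derivation of \eqref{eq:W1p-height} rather than the convexity estimate: one must justify the coarea formula and the change of variables for the merely piecewise smooth profile $\Phi_t$, control the set where $H_t''$ vanishes or $\Phi_t'$ blows up, and verify that the $+\infty$-extension of $G$ is genuinely jointly convex and lower semicontinuous up to the boundary $\{b=0\}$ so that convexity survives the reduction to strictly decreasing, compactly supported data. The Hessian computation and the deduction of \eqref{eq:apriorinterpolation} are routine by comparison.
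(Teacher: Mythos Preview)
Your proposal is correct and follows essentially the same approach as the paper: both derive the identity $[f_t]_{W^{1,p}(\R^n)}^p = C_{n,p}\int_0^1 (H_t'(m))^{2p-2+p/n}(H_t''(m))^{1-p}\,dm$ (the paper via polar coordinates and the change $r=(c_nH'(m))^{-1/n}$, you via the coarea formula and the change $h=H_t(m)$, which amounts to the same thing), and then reduce to the joint convexity of $(a,b)\mapsto a^{\alpha}b^{-\beta}$ via the Hessian computation, arriving at exactly the same threshold $\alpha-\beta-1=\tfrac{(n+1)p-2n}{n}\ge 0$. Your discussion of the regularity issues (justifying \eqref{eq:W1p-height}, handling $\{H_t''=0\}$, extending $G$ by $+\infty$) is more careful than the paper's proof, which proceeds formally on these points.
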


\begin{rem}
It is not known if the condition $p \geq 2n/(n+1)$ is sharp for the convexity in Proposition \ref{prop:interW1p}.
\end{rem}

\begin{proof}
Given a symmetric, radially decreasing function $f \in W^{1,p}(\R^n)$, 
we let $\Phi: [0,\infty) \to [0,\infty)$ be such that $f(x) = \Phi(|x|)$ and write
\begin{equation}\label{eq:fpPhi}
\int_{\R^n}|\nabla f|^p\;dx
    =n c_{n}\int_0^\infty r^{n-1}|\Phi'(r)|^p\;dr.
\end{equation}
As a consequence of \eqref{h_rho}, the radial variable $r$ can be expressed in terms of the height function $H$ as
$$
r=(c_n H'(m))^{-1/n}
$$
which gives
$$
dr=-\frac{c_n^{-1/n}}{n} \frac{H''(m)}{H'(m)^{1+1/n}} \;dm.
$$
Moreover, using \eqref{h''}, we can write
$$
\Phi'(r)=-nc_n^{1/n}\frac{(H'(m))^{2+1/n}}{H''(m)}.
$$
Therefore, applying the change of variables $r \mapsto m$ in \eqref{eq:fpPhi} gives
\begin{align*}
\int_{\R^n}|\nabla f|^p \, dx
&=n c_n\int_0^1(c_n H'(m))^{-1+1/n} n^pc_n^{p/n}\frac{(H'(m))^{p(2+1/n)}}{(H''(m))^p}\frac{c_n^{-1/n}}{n} \frac{H''(m)}{H'(m)^{1+1/n}} \;dm\\
&=C_{n,p}\int_0^1 (H'(m))^{p(2+1/n)-2} (H''(m))^{1-p}   \;dm.
\end{align*}

We consider the function $\Psi:\R\times [0,\infty) \to [0,\infty)$ given by
$$
\Psi(a,b)=a^{k} b^{\ell} \quad \hbox{for fixed}~k,\ell \in \R.
$$
To check the convexity or concavity of $\Psi$, we find the Hessian matrix
$$
D^2\Psi(a,b)=\begin{pmatrix}
k(k-1) a^{k-2}b^{\ell} & k\ell a^{k-1}b^{\ell-1}\\
k\ell a^{k-1}b^{\ell-1}& \ell(\ell-1)a^{k}b^{\ell-2}
\end{pmatrix}.
$$
Momentarily, we will set 
\begin{equation}\label{eq:kl}
k=p(2+1/n)-2\qquad\mbox{and}\qquad \ell =1-p,
\end{equation}
so we assume that $\ell < 0$ and $k>0$ since $p > 1$. 
Now, we have two cases $0<k<1$ and $1< k$, which corresponds to $1< p<3n/(2n+1)$ and $p> 3n/(2n+1)$, respectively.  
In the case $0 < k < 1$, the first minor is negative, hence if the determinant $\det D^2\Psi$ is positive, then the matrix $D^2\Psi$ is negative definite. 
In the case $1< k$, the first minor is positive, so if the determinant is positive, then the matrix $D^2\Psi$ is positive definite. 

Hence, to determine the convexity or concavity of $\Psi$, we need to check the positivity of the determinant of the Hessian, which is given by
$$
\det(D^2\Psi(a,b))=a^{2(k-1)}b^{2(\ell-1)}(k(k-1)\ell(\ell-1)-k^2\ell^2)=a^{2k-2}b^{2\ell-2}k\ell(1-k-\ell).
$$
Now taking $k,\ell$ as in \eqref{eq:kl}, we find that
\begin{align*}
k\ell(1-k-\ell)
    &=(p(2+1/n)-2)(1-p)(1- (p(2+1/n)-2) - (1-p))\\
    &= (p(2+1/n)-2)(p-1)(p(1+1/n)-2)
\end{align*}
is nonnegative if and only if
$$
p\geq \frac{2n}{n+1}.
$$
The stated result follows after writing
\[
\int_{\R^n}|\nabla f_t|^2 \, dx
    =C_{n,p}\int_0^1 \Psi(H_t'(m), H_t''(m))   \;dm.
\]
\end{proof}

Strict convexity under the height function interpolation also holds for potential energies with symmetric increasing potentials.

\begin{prop}\label{prop:V}
Consider $V$ a smooth, bounded, increasing radially symmetric potential. 
Let $f_0,f_1 \in C(\R^n)$ be two distinct, non-negative, symmetric decreasing functions with unit mass and let $f_t$, $0 \leq t \leq 1$, be the height function interpolation. 
Then 
\[
t \mapsto \int_{\R^n}  V(x) f_t(x)\, dx
\]
is strictly convex for all $0 < t < 1$.
\end{prop}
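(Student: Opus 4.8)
The plan is to use the height function representation of Lemma~\ref{lem:properties}(3) to rewrite the potential functional as an integral in the variable $m$ of a single fixed convex function applied to $H_t'$, and then transfer strict convexity. Write $V(x)=V_0(|x|)$ and $\mathcal{V}(r):=\int_{B(0,r)}V(x)\,dx=nc_n\int_0^r\rho^{n-1}V_0(\rho)\,d\rho$. Plugging the representation $f_t(x)=\int_0^1\chi_{(c_nH_t'(m))^{-1/n}}(x)\,H_t'(m)\,dm$ into $\int_{\R^n}Vf_t\,dx$ and applying Fubini (legitimate since $V$ is bounded and $f_t\in L^1(\R^n)$) gives
\[
\int_{\R^n}V(x)f_t(x)\,dx=\int_0^1\Theta\big(H_t'(m)\big)\,dm,\qquad \Theta(a):=a\,\mathcal{V}\big((c_na)^{-1/n}\big),
\]
where $H_t=(1-t)H_0+tH_1$, so that $H_t'(m)=(1-t)H_0'(m)+tH_1'(m)$ is affine in $t$ for a.e.\ $m$. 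Since $\mathcal{V}(r)\le\|V\|_{L^\infty(\R^n)}c_nr^n$, one has $|\Theta(a)|\le\|V\|_{L^\infty(\R^n)}$ for all $a>0$, so the integrand above is bounded uniformly in $t$ and $m$.

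The heart of the argument is to show that $\Theta$ is strictly convex on $(0,\infty)$. Put $r(a):=(c_na)^{-1/n}$, a strictly decreasing diffeomorphism of $(0,\infty)$ with $r'(a)=-r(a)/(na)$. A direct computation, using $\mathcal{V}'(r)=nc_nr^{n-1}V_0(r)$, yields
\[
\Theta'(a)=\mathcal{V}(r(a))-c_nr(a)^nV_0(r(a))=\phi(r(a)),\qquad \phi(r):=\mathcal{V}(r)-c_nr^nV_0(r),
\]
and then the two copies of $nc_nr^{n-1}V_0(r)$ cancel, leaving $\phi'(r)=-c_nr^nV_0'(r)\le0$; equivalently $\Theta''(a)=\tfrac{c_n}{na}r(a)^{n+1}V_0'(r(a))\ge0$. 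Because $V$ is increasing and $V_0$ is not constant on any nondegenerate interval, $\phi$ is in fact strictly decreasing on $(0,\infty)$; composing the strictly decreasing $\phi$ with the strictly decreasing $r(\cdot)$ shows $\Theta'=\phi\circ r$ is strictly increasing, hence $\Theta$ is strictly convex. (It is cleaner to deduce strict convexity from monotonicity of $\Theta'$ than from pointwise positivity of $\Theta''$, since $\Theta''$ may vanish where $V_0'$ does.)

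Finally I would transfer this to $g(t):=\int_0^1\Theta(H_t'(m))\,dm$. For each $m$, $t\mapsto\Theta(H_t'(m))$ is convex (a convex function precomposed with an affine map), and it is strictly convex exactly when $H_0'(m)\ne H_1'(m)$. I would then check that $S:=\{m\in(0,1):H_0'(m)\ne H_1'(m)\}$ has positive measure: if not, then since $H_0,H_1$ are convex (Lemma~\ref{lem:properties}(1)), hence absolutely continuous on $(0,1)$, and $\lim_{m\to0^+}H_i(m)=0$ (because $\int_{\R^n}\min\{f_i,H_i(m)\}\,dx=m\to0$ and $f_i\not\equiv0$), we would get $H_0\equiv H_1$ on $(0,1)$ and therefore $f_0=f_1$ by Lemma~\ref{lem:properties}(3), contradicting $f_0\ne f_1$. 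With $|S|>0$ and the uniform bound $|\Theta(H_t'(m))|\le\|V\|_{L^\infty(\R^n)}$ allowing one to average the convexity inequalities over $m$, the function $g$, being an average of convex functions of which a positive-measure family is strictly convex, is itself strictly convex on $(0,1)$, which is the claim. The main obstacle is the strict-convexity bookkeeping — obtaining $\Theta$ strictly convex on all of $(0,\infty)$ despite possible zeros of $V_0'$, and guaranteeing $|S|>0$ from $f_0\ne f_1$; everything else is a routine change of variables and differentiation.
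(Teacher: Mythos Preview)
Your proof is correct and follows essentially the same route as the paper: both rewrite the potential energy via the height-function representation as $\int_0^1 \Theta(H_t'(m))\,dm$ (the paper writes this as $\int_0^1 F_V((c_nH_t')^{-1/n})H_t'\,dm$), and both arrive at the same second-derivative formula $\Theta''(a)=\tfrac{1}{nc_n^{1/n}}a^{-2-1/n}V_0'((c_na)^{-1/n})$. Your version is slightly more careful than the paper's in two places: you deduce strict convexity of $\Theta$ from strict monotonicity of $\Theta'$ rather than pointwise positivity of $\Theta''$, and you explicitly verify that $\{m:H_0'(m)\neq H_1'(m)\}$ has positive measure, both of which the paper leaves implicit; note however that your claim ``$V_0$ is not constant on any nondegenerate interval'' is really the hypothesis that $V$ is \emph{strictly} increasing, which is the intended reading here.
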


\begin{proof}
Let $v:[0,\infty) \to \R$ be such that $V(x) = v(|x|)$. 
With this and \eqref{h_rho} for $f_t$, we re-write 
\begin{equation}\label{Potential Energy}
\begin{aligned}
\int_{\R^n}V(x)f_t(x)\;dx
    &=  \int_{\R^n}V(x) \int_0^1\chi_{(c_nH_t'(m))^{-1/n}}(x)H_t'(m)\;dm \, dx\\
    &=  \int_0^1 \left(n 
c_n\int_{0}^{(c_nH_t'(m))^{-1/n}}v(r)r^{n-1}\;dr\right) H_t'(m)\;dm\\
&=\int_0^1 F_V((c_nH_t'(m))^{-1/n})H_t'(m)\;dm,
\end{aligned}
\end{equation}
where we define
$$
F_V(\xi):=nc_n\int_0^\xi v(r) r^{n-1}\;dr, \quad \xi \geq 0. 
$$
Differentiating this function we obtain
$$
F_V'(\xi)=nc_nv(\xi)\xi^{n-1},
$$
so that differentiating the potential energy \eqref{Potential Energy}, we obtain
\begin{equation}\label{dV}
\begin{aligned}
\frac{d}{dt}&\int_{\R^n}V(x)f_t(x)\;dx\\
&=\int_0^1 \left( -\frac{1}{n}\frac{F'_V((c_nH_t'(m))^{-1/n})}{(c_n H_t'(m))^{1/n}}+F_V((c_n H_t'(m))^{-1/n})\right)(H_1'(m)-H_0'(m))\;dm\\
&=\int_0^1\left(- \frac{v((c_nH_t'(m))^{-1/n})}{H_t'(m)}+F_V((c_nH_t'(m))^{-1/n})\right)(H_1'(m)-H_0'(m))\;dm.
\end{aligned}
\end{equation}
Differentiating again gives the desired result
\[
 \frac{d^2}{dt^2}\int_{\R^n}V(x)f_t(x)\;dx
 =
\int_0^1 \frac{1}{c_n^{1/n}n}\frac{v'((c_nH_t'(m))^{-1/n})}{(H_t'(m))^{2+1/n}}(H_1'(m)-H_0'(m))^2\;dm >0.
\]
\end{proof}

Lastly, we turn our attention to proof of Theorem \ref{thm:nonlocal-convexity}.
For reference, we state a simplified version of the result in \cite{delgadino2022uniqueness}. 

\begin{prop}[Proposition 4.5 in \cite{delgadino2022uniqueness}]\label{prop:DYY-kernel}
Consider $W$ a smooth, bounded, increasing radially symmetric kernel. 
Let $f_0,f_1 \in C(\R^n)$ be two distinct, non-negative, symmetric decreasing functions with unit mass and let $f_t$, $0 \leq t \leq 1$, be the height function interpolation. 
Then 
\[
t \mapsto \int_{\R^n} \int_{\R^n} f_t(x) f_t(y) W(x-y) \, dx \, dy
\]
is strictly convex for all $0 < t < 1$. Moreover, the convexity is monotonic on the derivative with respect to the radial variable $W'$ of the potential.
\end{prop}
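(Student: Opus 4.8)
The plan is to follow the scheme behind \eqref{eq:Lp-height}, Proposition~\ref{prop:interW1p}, and Proposition~\ref{prop:V}: pass to height-function coordinates, where $t\mapsto H_t'$ is affine, and then differentiate twice. Writing $W(z)=w(|z|)$ with $w$ smooth, bounded and increasing, and using \eqref{h_rho} together with $\big|B\big(0,(c_nH_t'(m))^{-1/n}\big)\big|=1/H_t'(m)$, one gets
\[
\int_{\R^n}\int_{\R^n}f_t(x)f_t(y)W(x-y)\,dx\,dy
 =\iint_{(0,1)^2}\Theta\big(H_t'(m),H_t'(m')\big)\,dm\,dm',
\]
where, for $a,b>0$ and $\rho(a):=(c_na)^{-1/n}$,
\[
\Theta(a,b):=ab\int_{B(0,\rho(a))}\int_{B(0,\rho(b))}W(x-y)\,dx\,dy
\]
is the average of $W$ over the product of two centered balls of volumes $1/a$ and $1/b$. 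Since $H_t'=(1-t)H_0'+tH_1'$ is affine in $t$ with $\partial_tH_t'=H_1'-H_0'=:\psi\not\equiv0$ (because $f_0\ne f_1$),
\[
\frac{d^2}{dt^2}\int_{\R^n}\int_{\R^n}f_tf_tW
 =\iint_{(0,1)^2}\big\langle D^2\Theta\big(H_t'(m),H_t'(m')\big)\,(\psi(m),\psi(m')),\,(\psi(m),\psi(m'))\big\rangle\,dm\,dm',
\]
so everything reduces to showing that $\Theta$ is convex on $(0,\infty)^2$, with suitable strictness.

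Next I would reduce to indicator kernels. Since $w$ is smooth, bounded and increasing, $W(z)=w(\infty)-\int_0^\infty\chi_{\{|z|<\lambda\}}\,w'(\lambda)\,d\lambda$ with $w'\ge0$, hence $\Theta(a,b)=w(\infty)-\int_0^\infty\Theta_\lambda(a,b)\,w'(\lambda)\,d\lambda$, where
\[
\Theta_\lambda(a,b):=ab\int_{B(0,\rho(a))}\int_{B(0,\rho(b))}\chi_{\{|x-y|<\lambda\}}\,dx\,dy
\]
is the probability that two independent uniform points in these two balls lie within distance $\lambda$. Thus $\Theta$ is convex as soon as every $\Theta_\lambda$ is concave. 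The same decomposition gives the monotonicity claim: $\frac{d^2}{dt^2}\int\int f_tf_tW=\int_0^\infty w'(\lambda)\,A_\lambda(t)\,d\lambda$, where $A_\lambda(t):=-\frac{d^2}{dt^2}\iint_{(0,1)^2}\Theta_\lambda(H_t'(m),H_t'(m'))\,dm\,dm'$ does not depend on $w$ and is $\ge0$ once concavity of $\Theta_\lambda$ is established; so a pointwise larger $W'=w'$ produces a larger second derivative.

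It then remains to prove that each $\Theta_\lambda$ is concave, and this is the main obstacle. Concavity in each variable separately is not hard: slicing $\Theta_\lambda(a,b)=ab\int_{B(0,\rho(a))}v(|x|;\rho(b))\,dx$, where $v(r;R):=\big|B(0,R)\cap B(re_1,\lambda)\big|$ is the overlap volume, and applying the co-area formula in $|x|$, a direct computation using $c_n\rho(a)^n=1/a$ and $\rho'(a)=-\rho(a)/(na)$ gives
\[
\partial_{aa}\Theta_\lambda(a,b)=\frac{b\,\rho(a)}{n a^2}\,\partial_r v\big(\rho(a);\rho(b)\big)\le0,
\]
since $v(\cdot;R)$ is decreasing; symmetrically $\partial_{bb}\Theta_\lambda\le0$. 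The genuine difficulty is to upgrade this to \emph{joint} concavity, i.e.\ $\det D^2\Theta_\lambda\ge0$: one must compute the mixed term $\partial_{ab}\Theta_\lambda$ and show it is dominated by the diagonal entries. I would do this by exploiting the full geometry of the overlap volume — $v(r;R)$ is decreasing in $r$, increasing in $R$, equals $\min\{c_nR^n,c_n\lambda^n\}$ for $r\le|R-\lambda|$ and vanishes for $r\ge R+\lambda$ — together with the convexity of $\rho$; equivalently, after rescaling, $\Theta_\lambda(a,b)=\mathbb P\big(|\rho(a)U-\rho(b)V|<\lambda\big)$ for $U,V$ independent uniform in the unit ball, and one analyzes the Hessian of $(a,b)\mapsto\mathbb P(|\rho(a)U-\rho(b)V|<\lambda)$ directly.

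Once joint concavity of every $\Theta_\lambda$ is in hand, convexity of $t\mapsto\int\int f_tf_tW$ follows from the first display, and the monotonicity statement from the formula above. For \emph{strict} convexity at $t\in(0,1)$ one uses $\psi\not\equiv0$ and that $w$ is (strictly) increasing: one selects a range of $\lambda$ on which $\partial_{aa}\Theta_\lambda<0$ on a positive-measure set of arguments actually attained by $\big(H_t'(m),H_t'(m')\big)$ for $(m,m')\in(0,1)^2$, so that the integrated quadratic form defining $A_\lambda(t)$ is strictly positive there, and then $\int_0^\infty w'(\lambda)A_\lambda(t)\,d\lambda>0$ since $w'>0$ on that range. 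I expect the joint concavity of $\Theta_\lambda$ in the previous paragraph to be the only nontrivial step; the separate-variable concavity is elementary, but controlling the off-diagonal Hessian entry requires a real geometric analysis of how the intersection volume of two balls varies with their radii.
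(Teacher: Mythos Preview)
This proposition is not proved in the present paper; it is quoted verbatim from \cite{delgadino2022uniqueness} and used as a black box in the proof of Theorem~\ref{thm:nonlocal-convexity}. There is therefore no in-paper argument to compare your attempt against.

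On the merits of your proposal: the reduction to height-function coordinates and the layer-cake decomposition $W=w(\infty)-\int_0^\infty\chi_{\{|\cdot|<\lambda\}}w'(\lambda)\,d\lambda$ are both natural and match the spirit of Proposition~\ref{prop:V}; they also immediately deliver the monotonicity-in-$w'$ claim once the sign of each $A_\lambda$ is known. The substantive gap, which you correctly flag, is the \emph{joint} concavity of $\Theta_\lambda(a,b)$. Your separate-variable computation is fine, but it does not control the mixed second derivative, and a $2\times2$ Hessian with negative diagonal entries need not be negative semidefinite. In dimension $n=1$ the joint concavity can in fact be checked by hand: writing $p=1/(2a)$, $q=1/(2b)$, one has $\Theta_\lambda$ piecewise equal to $1$, to $1-(p+q-\lambda)^2/(4pq)$, and to $2\lambda\min(a,b)$, each piece is concave in $(a,b)$, and the pieces match in a $C^1$ fashion across the interfaces $\lambda=|p-q|$ and $\lambda=p+q$, which together force global concavity. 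In higher dimensions the overlap volume $|B_{\rho(a)}\cap(B_{\rho(b)}+x)|$ is no longer piecewise polynomial and the corresponding Hessian analysis is genuinely harder; your sketch does not indicate how this would go, so as written the argument is incomplete. Note also that pointwise convexity of $\Theta$ is sufficient but stronger than needed: only the integrated quadratic form in $\psi=H_1'-H_0'$ must be nonnegative, and exploiting that structure (as the original reference likely does, paralleling the direct second-derivative computation in Proposition~\ref{prop:V}) may be easier than establishing full joint concavity of every $\Theta_\lambda$.
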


While the original result in \cite{delgadino2022uniqueness} allows for more singular kernels, the kernels in the Gagliardo seminorms are not included. 
We again utilize the $\varepsilon$-regularization in \eqref{eq:Fep}. 

\begin{proof}[Proof of Theorem \ref{thm:nonlocal-convexity}]
Fix $\eps>0$ and let $\mathcal{F}_\eps^2$ be as in \eqref{eq:Fep}. 
Recalling \eqref{eq:Iep}, we write
\[
\mathcal{F}_{\varepsilon}^2(f_t)
    =C_{\varepsilon}\|f_t\|_{L^2(\R^n)}^2\\
    -2 \int_{\R^{2n}} f_t(x) f_t(x) W_{\varepsilon}(x-y) \, dx \, dy.
\]
Note that $\tilde{W}_\eps = -2W_\eps$ is a smooth, bounded, increasing radially symmetric kernel. 
Consequently, we may apply \eqref{eq:Lp-height} and Proposition \ref{prop:DYY-kernel} to obtain 
\[
\frac{d^2}{dt^2}\mathcal{F}_{\varepsilon}^2(f_t)
    = 0 + \frac{d^2}{dt^2} \bigg[\int_{\R^{2n}} f_t(x) f_t(x) \tilde{W}_\eps(x-y) \, dx \, dy\bigg]>0.  
\]
The convexity now follows by taking $\varepsilon \to 0^+$, using the monotonicity with respect of $\eps$ of $W'_\eps$, and the monotonicity of convexity under Proposition \ref{prop:DYY-kernel}. 
\end{proof}

\section{On Theorem \ref{thm:uniqueness}}\label{sec:uniqueness}

In this section, we use Theorem \ref{thm:main2} and Theorem \ref{thm:nonlocal-convexity} to establish Theorem \ref{thm:uniqueness}. 
Notice from Figures \ref{fig:symmetrization} and \ref{fig:local-ex} that the continuous Steiner symmetrization $v^\tau$ of $v$ does not preserve the support of $v$, so we cannot directly compare for $v$ and $v^\tau$ using \eqref{eq:stationary} to establish uniqueness. 
To preserve the support of $v$, we slow down the speed of the level sets near $h=0$ in Definition \ref{defn:cont set}. 

Before proceeding with the proof of Theorem \ref{thm:uniqueness}, 
we present truncated continuous Steiner symmetrizations and their properties. 

\subsection{Truncated symmetrizations}\label{sec:truncated}

Fix $h_0>0$ and let $v_0(h) = \min\{1, \frac{h}{h_0}\}$ for $h \geq 0$. 
The continuous Steiner symmetrization truncated at height $h_0$ of a super-level set $U = \{f>h\} \subset \R$ of a function $f$ at height $h>0$ is given by $M^{v_0(h)\tau}(U)$. 
The continuous Steiner symmetrization truncated at height $h_0$ of a nonnegative function $f \in L^1(\R^n)$ in the direction of $e_n$ is denoted by $\tilde{f}^\tau$ and defined as
\[
\tilde{f}^\tau(x) = \int_0^{\infty} \chi_{M^{v_0(h)\tau}(U_{x'}^h)}(x_n) \, dh \quad \hbox{for}~x = (x',x_n) \in \R^{n},~h>0.
\]

\begin{figure}[htbp]
 \begin{tikzpicture}[scale=1.75, use Hobby shortcut, closed=false]
 \tikzset{bullet/.style={diamond,fill,inner sep=1.25pt}}
\draw[line width=1pt, cyan] 
    (-3.25,0)--(-2.75,0)--(-1.75,2)--(-0.75,0) 
	--(.75,0)--(1.75,1)--(2.75,0)--(3.25,0); 
\draw[cyan] (-1.6,2.15) node { $f^\tau$};
 \draw[line width=1pt, blue] 
    (-3.25,0)--(-3,0)--(-2.63,.25)--(-1.75,2)--(-1,.5);
\draw[line width=1pt, blue] (1,.25)--(1.75,1)--(2.5,.25)--(3,0)--(3.25,0); 
\draw[blue] (-1.3,2.15) node { $\tilde{f}^\tau$};
  \draw[line width=1pt, blue] 
    (-1,.5)-- (-.87,0);
\draw[line width=1pt] 
    (-3.25,0)--(-3,0)--(-2,2)--(-1,0) 
	--(1,0)--(2,1)--(3,0)--(3.25,0); 
\draw (-1.9,2.15) node { $f$};
 \draw[line width=1.25pt, red, ->] (-.75,.4)--(-.75,.1);
 \draw[line width=1pt,red] (1,0)--(1,.25); 
\draw[line width=1pt,<->] (0,-.25)--(0,2.25); 
	\draw (.25,2.25) node {$h$};
\draw[line width=1pt,<->] (-3.5,0)--(3.5,0); 
	\draw (3.75,0) node {$y$};
 \draw[line width=.75pt,red] (-3.25,.25)--(3.25,.25); 
	\draw[red] (3.75,.25) node {$h=h_0$};
\end{tikzpicture}
\caption{The graphs of $f$, $f^\tau$, and $\tilde{f}^\tau$ in Example \ref{ex:zero} at $x=0$ with $h_0 =\tau= .25$, illustrating how the level sets below the line $h=h_0$ have dropped}
\label{fig:truncated}
\end{figure}

Given a Lipschitz function $f$, we know by Corollary \ref{cor:Lip} that $f^\tau$ is also Lipschitz. 
However, the corresponding truncated symmetrization, $\tilde{f}^\tau$, is not necessarily Lipschitz for all $\tau$ since the level sets near $h=0$ move slower than those above.
In particular, the higher level sets may `drop', {see Figure \ref{fig:truncated}}. 
We will show that, when $\tau$ is sufficiently small, this does not happen and that $\tilde{f}^\tau$ is Lipschitz with the same support as $f$. 

\begin{prop}\label{prop:Lip constant}
Let $f:\R^n \to [0,\infty)$ be Lipschitz with $c_0 = [f]_{\text{Lip}}$.
Then, for each $0 \leq \tau < h_0/c_0$, the function $\tilde{f}^\tau$  is Lipschitz with 
\begin{equation}\label{eq:lip truncated}
[\tilde{f}^\tau]_{\text{Lip}} \leq  \frac{c_0h_0}{h_0-c_0 \tau}
\end{equation}
and satisfies
\begin{equation}\label{eq:support}
\supp \tilde{f}^\tau = \supp f\quad \hbox{and} \quad
\tilde{f}^\tau = f^{\tau} \quad \hbox{in}~\{f^{\tau}>h_0\}.
\end{equation}
Consequently, the upper Dini derivative of $\tilde{f}^\tau$ with respect of $\tau$ satisfies
\begin{equation}\label{eq:dini-bound}
\frac{d^+}{d\tau}[\tilde{f}^\tau]_{\text{Lip}} \bigg|_{\tau=0}  \leq \frac{c_0^2}{h_0}. 
\end{equation}
\end{prop}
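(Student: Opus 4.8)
The plan is to reduce everything to the pointwise gradient bound $|\nabla\tilde f^\tau|\le \tfrac{c_0h_0}{h_0-c_0\tau}$ for good functions, and then obtain the general statement by approximation; the assertions in \eqref{eq:support} and \eqref{eq:dini-bound} will follow as corollaries.

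First suppose $f$ is a good function. Fix $x'$ and a branch $y_j=y_j(x',h)$ of $\{f(x',\cdot)=h\}$, and let $y_j^\tau=y_j^\tau(x',h)$ be the corresponding branch of $\{\tilde f^\tau(x',\cdot)=h\}$. On the super-level interval $(y_{2k-1},y_{2k})$, Definition~\ref{defn:cont set} gives $y_j^\tau=y_j-v_0(h)\tau\,\sgn(y_{2k}+y_{2k-1})$ until a recentering event, where $v_0(h)=\min\{1,h/h_0\}$, so that $v_0'(h)=\tfrac1{h_0}$ for $h<h_0$ and $v_0'(h)=0$ for $h>h_0$. Differentiating in $h$ and in $x'$,
\[
\partial_h y_j^\tau=\partial_h y_j-\tfrac{\tau}{h_0}\,\sgn(y_{2k}+y_{2k-1})\,\chi_{\{h<h_0\}},\qquad \partial_{x'}y_j^\tau=\partial_{x'}y_j ,
\]
the second identity holding a.e.\ because $\sgn(y_{2k}+y_{2k-1})$ is locally constant in $x'$ off a null set. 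Since $f$ is $c_0$-Lipschitz, $|\partial_h y_j|=|\partial_{x_n}f(x',y_j)|^{-1}\ge c_0^{-1}$ and $\partial_h y_{2k}<0<\partial_h y_{2k-1}$; hence for $0\le\tau<h_0/c_0$,
\[
|\partial_h y_j^\tau|\ge |\partial_h y_j|-\tfrac{\tau}{h_0}\ge \tfrac1{c_0}-\tfrac{\tau}{h_0}=\tfrac{h_0-c_0\tau}{c_0h_0}>0 ,
\]
so $\partial_h y_j^\tau$ never vanishes and keeps its sign. In particular the branches $h\mapsto y_j^\tau(x',h)$ stay ordered, no level set ``drops'', $\tilde f^\tau$ is again a good function, and $\supp\tilde f^\tau=\supp f$, since the outermost branch at height $h$ lies at $e_n$-distance $\ge h/c_0$ from $\partial\{f>0\}$ and moves by at most $v_0(h)\tau=h\tau/h_0<h/c_0$.

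Next I would compute $|\nabla\tilde f^\tau|$ exactly as in the proof of Proposition~\ref{lem:good-local}: away from recentering events, $\partial_{x_n}\tilde f^\tau=(\partial_h y_j^\tau)^{-1}$ and $\partial_{x'}\tilde f^\tau=-(\partial_{x'}y_j)(\partial_h y_j^\tau)^{-1}$, with the same identities for $f$ after replacing $y_j^\tau$ by $y_j$; therefore
\[
|\nabla\tilde f^\tau(x',y_j^\tau)|=\frac{|\partial_h y_j|}{|\partial_h y_j^\tau|}\,|\nabla f(x',y_j)|\le \frac{h_0}{h_0-c_0\tau}\,|\nabla f(x',y_j)|\le \frac{c_0h_0}{h_0-c_0\tau},
\]
where the middle inequality uses that $t\mapsto t/(t-\tau/h_0)$ is decreasing on $[c_0^{-1},\infty)$. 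On a recentered branch the interval $(y_{2k-1}^\tau,y_{2k}^\tau)$ is symmetric about $\{x_n=0\}$ with $y_{2k}^\tau=(y_{2k}-y_{2k-1})/2$, and a direct computation of $|\nabla\tilde f^\tau|^2=(\partial_h y_{2k}^\tau)^{-2}\big((\partial_{x'}y_{2k}^\tau)^2+1\big)$ — which reduces, via Cauchy--Schwarz and $|\nabla f|\le c_0$ at both endpoints $y_{2k},y_{2k-1}$, to the inequality $ac+|b||d|\le c_0^2$ for $(a,b)=\nabla f(x',y_{2k})$, $(c,d)=\nabla f(x',y_{2k-1})$ — gives the even better bound $|\nabla\tilde f^\tau|\le c_0$ there. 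Since $\tilde f^\tau$ is continuous and piecewise smooth, this a.e.\ gradient estimate yields \eqref{eq:lip truncated} for good $f$, while $v_0(h)=1$ for $h\ge h_0$ together with the non-crossing of branches gives $\tilde f^\tau=f^\tau$ on $\{f^\tau>h_0\}$.

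For a general Lipschitz $f$ one approximates by good functions $f_k\to f$ with $[f_k]_{\mathrm{Lip}}\to c_0$; since truncated continuous Steiner symmetrization is continuous with respect to this convergence (by the layer-cake representation and continuity of $M^\sigma$), $\tilde f_k^\tau\to\tilde f^\tau$, and lower semicontinuity of the Lipschitz seminorm promotes \eqref{eq:lip truncated}--\eqref{eq:support} to $f$. Finally \eqref{eq:dini-bound} is immediate, because the right-hand side of \eqref{eq:lip truncated} equals $c_0=[\tilde f^0]_{\mathrm{Lip}}$ at $\tau=0$, so
\[
\frac{d^+}{d\tau}[\tilde f^\tau]_{\mathrm{Lip}}\Big|_{\tau=0}\le \lim_{\tau\to0^+}\frac1\tau\Big(\frac{c_0h_0}{h_0-c_0\tau}-c_0\Big)=\frac{c_0^2}{h_0}.
\]
The main obstacle is the claim in the second paragraph that the symmetrized branches do not cross — i.e.\ that $\tilde f^\tau$ stays a genuine good function with nested super-level sets rather than a ``dropped'' one — and the recognition that this is governed precisely by the competition between the Lipschitz slope $c_0^{-1}$ of $f$ and the slope $1/h_0$ of the truncation profile $v_0$, which is exactly what forces the admissible range $\tau<h_0/c_0$; a secondary point is the stability of the construction (continuity of truncated symmetrization and a Lipschitz-controlled approximation) used in the limiting step.
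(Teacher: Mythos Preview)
Your approach is a genuinely different route from the paper's. The paper argues entirely through a level-set characterization of the Lipschitz seminorm, namely
\[
[f]_{\mathrm{Lip}}=\sup_{0<h_1<h_2}\frac{h_2-h_1}{\dist(\partial\{f\ge h_2\},\partial\{f\le h_1\})},
\]
and then, for good $f$, bounds $\dist(\partial\{f^{v_0(h_2)\tau}>h_2\},\partial\{f^{v_0(h_1)\tau}>h_1\})$ from below using the triangle inequality together with the fact that level-set boundaries move by at most the increment in $\tau$ (Lemma~\ref{lem:distance for fixed h}). One estimate then yields simultaneously the non-dropping of super-level sets and the Lipschitz bound \eqref{eq:lip truncated}. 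Your route---parametrizing by branches $y_j(x',h)$, showing $|\partial_h y_j^\tau|\ge |\partial_h y_j|-\tau/h_0>0$, and reading off $|\nabla\tilde f^\tau|=|\partial_h y_j|\,|\partial_h y_j^\tau|^{-1}\,|\nabla f|$---is more analytic and produces the same constant; it also makes transparent why the threshold $\tau<h_0/c_0$ is exactly where $\partial_h y_j^\tau$ could degenerate. The cost is a case analysis: your treatment of recentered branches is incomplete, since the inequality you invoke, $ac+|b||d|\le c_0^2$, is not the quantity that actually appears---a direct computation gives $|\nabla\tilde f^\tau|^2=\big(|ad-bc|^2+4a^2c^2\big)/(c-a)^2$ in your notation, and bounding this by $c_0^2$ requires a slightly different argument (it is true, being essentially the P\'olya--Szeg\H{o} inequality for Steiner symmetrization). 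The paper's distance-based argument sidesteps this case entirely. On the approximation step your use of lower semicontinuity of $[\,\cdot\,]_{\mathrm{Lip}}$ is, if anything, cleaner than what the paper writes.
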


First, we prove a characterization of Lipschitz functions with respect to their level sets. 

\begin{lem}\label{lem:distance from boundary}
For a function $f:\R^n \to [0,\infty)$, it holds that
\begin{equation}\label{eq:Lip bound}
[f]_{\text{Lip}} = \sup_{0 < h_1 <h_2} \frac{h_2-h_1}{\operatorname{dist}(\partial \{f\geq h_2\},  \partial\{f \leq h_1\})}. 
\end{equation}
\end{lem}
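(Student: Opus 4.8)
The plan is to prove the identity \eqref{eq:Lip bound} by establishing the two inequalities separately. For the inequality ``$\geq$'', fix heights $0 < h_1 < h_2$ and pick points $x \in \partial\{f \geq h_2\}$ and $y \in \partial\{f \leq h_1\}$. By continuity, $f(x) = h_2$ (or at least $f(x) \geq h_2$ if $f$ is only upper semicontinuous — but the paper works with continuous $f$, so $f(x)=h_2$) and $f(y) = h_1$; hence $|f(x) - f(y)| = h_2 - h_1 \leq [f]_{\mathrm{Lip}} |x-y|$. Taking the infimum over all such $x,y$ gives $h_2 - h_1 \leq [f]_{\mathrm{Lip}} \operatorname{dist}(\partial\{f\geq h_2\}, \partial\{f\leq h_1\})$, and then the supremum over $h_1 < h_2$ yields ``$\geq$''. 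One should be slightly careful that the relevant boundaries are nonempty; if, say, $\{f \geq h_2\} = \emptyset$ then the distance is $+\infty$ by convention and the bound is vacuous, so one may assume both sets and their boundaries are nonempty.

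For the reverse inequality ``$\leq$'', I would take any two points $x_0, x_1 \in \R^n$ with $f(x_0) \neq f(x_1)$, say $h_1 := f(x_1) < f(x_0) =: h_2$, and show $h_2 - h_1 \leq C |x_0 - x_1|$ where $C$ denotes the supremum on the right-hand side of \eqref{eq:Lip bound}. The key observation is that $x_0 \in \{f \geq h_2\}$ so $\operatorname{dist}(x_0, \partial\{f \geq h_2\}) \leq \operatorname{dist}(x_0, \{f < h_2\}) $ — actually more useful: the segment $[x_0,x_1]$ starts in $\{f \geq h_2\}$ and ends in $\{f \leq h_1\}$, so by connectedness it must cross $\partial\{f \geq h_2\}$ at some point $p$ and $\partial\{f \leq h_1\}$ at some point $q$, with $p,q$ both on the segment and $q$ no closer to $x_0$ than $p$ (since along the segment $f$ drops from $\geq h_2$ down to $\leq h_1$, it leaves $\{f\geq h_2\}$ before it enters $\{f \leq h_1\}$; here continuity of $f$ is what guarantees these crossing points exist). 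Therefore
\[
|x_0 - x_1| \geq |p - q| \geq \operatorname{dist}(\partial\{f \geq h_2\}, \partial\{f \leq h_1\}) \geq \frac{h_2 - h_1}{C},
\]
which rearranges to $h_2 - h_1 \leq C |x_0 - x_1|$. Taking the supremum over $x_0 \neq x_1$ gives $[f]_{\mathrm{Lip}} \leq C$, completing the proof.

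The main obstacle is the intermediate-value / connectedness step in the ``$\leq$'' direction: one must argue that the two boundary crossing points $p$ and $q$ genuinely occur in the right order along the segment $[x_0,x_1]$, so that the Euclidean distance between them is a valid lower bound for $\operatorname{dist}(\partial\{f \geq h_2\}, \partial\{f \leq h_1\})$. Concretely, parametrize $\ell(t) = (1-t)x_0 + t x_1$, $t \in [0,1]$, let $t_p = \sup\{t : f(\ell(t)) \geq h_2\}$ and $t_q = \inf\{t : f(\ell(t)) \leq h_1\}$; continuity of $f \circ \ell$ forces $f(\ell(t_p)) = h_2$, $f(\ell(t_q)) = h_1$, and $t_p \leq t_q$ because on $(t_p, t_q)$ we have $h_1 < f(\ell(t)) < h_2$. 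One then checks $\ell(t_p) \in \partial\{f \geq h_2\}$ and $\ell(t_q) \in \partial\{f \leq h_1\}$, and $|\ell(t_p) - \ell(t_q)| = (t_q - t_p)|x_0 - x_1| \leq |x_0 - x_1|$. Everything else is a routine packaging of these two inequalities, and the degenerate cases ($f$ constant, empty level sets) are handled by the convention that $\operatorname{dist}(\emptyset, \cdot) = +\infty$ and $\sup \emptyset = 0$.
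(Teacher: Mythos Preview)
Your proposal is correct and follows the same two-inequality strategy as the paper. The paper's proof is terser in the ``$\leq$'' direction---it simply asserts $\operatorname{dist}(\partial\{f\geq h_2\},\partial\{f\leq h_1\})\leq |x-y|$ for $f(x)=h_2$, $f(y)=h_1$ without spelling out the segment-crossing argument---so your parametrization via $\ell(t)$ and the intermediate-value step actually supplies a detail the paper leaves implicit.
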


\begin{proof} 
For ease, set
\[
c_0 :=  \sup_{0 < h_1 <h_2} \frac{h_2-h_1}{\operatorname{dist}(\partial \{f\geq h_2\},  \partial\{f \leq h_1\})}. 
\]
We will show that $c_0 = [f]_{\text{Lip}}$. 

First, we claim that $c_0 \leq [f]_{\text{Lip}}$. If $[f]_{\text{Lip}}=+\infty$, there is nothing to show, so assume that $f$ is Lipschitz. 
Fix $0 < h_1 < h_2$. If $x,y \in \R^n$ are such that $f(y) = h_1$ and $f(x) = h_2$, then
\[
h_2 - h_1 = \abs{f(x) - f(y)} \leq [f]_{\text{Lip}}  \abs{x-y}.
\]
Taking the infimum over all $x \in \{ f = h_1\}$ and $y \in \{ f = h_2\}$, we have that
\[
h_2 - h_1 = \abs{f(x) - f(y)} \leq [f]_{\text{Lip}} \operatorname{dist}(\partial \{f\geq h_2\},  \partial\{f \leq h_1\}). 
\]
Equivalently, 
\[
\frac{h_2 - h_1}{\operatorname{dist}(\partial \{f\geq h_2\},  \partial\{f \leq h_1\})}  \leq [f]_{\text{Lip}} \quad \hbox{for all}~0 < h_1 <h_2,
\]
and we have that $c_0 \leq [f]_{\text{Lip}}$. 

Let us now show that $[f]_{\text{Lip}} \leq c_0$. We may assume that $c_0 < \infty$; otherwise we are done.  
Let $x,y \in \R^n$ and set $h_1 = f(y)$, $h_2 = f(x)$. 
Without loss of generality, assume $0 < h_1 < h_2$. Then
\[
\abs{f(x) - f(y)} = h_2-h_1 \leq c_0 \operatorname{dist}(\partial \{f\geq h_2\}, \partial \{f \leq h_1\})  \leq c_0\abs{x-y},
\]
which shows that $f$ is Lipschitz with $[f]_{\text{Lip}} \leq c_0$. This completes the proof of \eqref{eq:Lip bound}.
\end{proof}

\begin{rem}
Following the proof of Lemma \ref{lem:distance from boundary}, one can show for $\alpha \in (0,1)$ that
\[
[f]_{C^\alpha} = \sup_{0 < h_1 <h_2} \frac{h_2-h_1}{|\operatorname{dist}(\partial \{f\geq h_2\},  \partial\{f \leq h_1\})|^{\alpha}}. 
\]
We expect a similar result to Proposition \ref{prop:Lip constant} to hold for all $C^{\alpha}$-seminorms. 
\end{rem}

Note in the following that if $f$ is a good function, then
\[
\partial\{f\geq h\} = \partial \{f>h\} = \partial \{ f \leq h\} \quad \hbox{for}~h>0. 
\]

\begin{proof}[Proof of Proposition \ref{prop:Lip constant}]
By Corollary \ref{cor:Lip}, we have that $f^\tau$ is Lipschitz with $[f^\tau]_{\text{Lip}} \leq c_0$. 
Fix $h_2 > h_1 > 0$. 
By Lemma \ref{lem:distance from boundary}, we have that
\[
\dist( \partial\{f^{\tau}\geq h_2\}, \partial\{f^{\tau}\leq h_1\} ) \geq \frac{h_2-h_1}{c_0} \quad \hbox{for all}~\tau\geq0.
\]
Assume for now that $f$, and consequently $f^\tau$, is a good function, so that
\[
\dist( \partial\{f^{\tau}> h_2\}, \partial\{f^{\tau}> h_1\} ) \geq \frac{h_2-h_1}{c_0} \quad \hbox{for all}~\tau\geq0.
\]
With this and Lemma \ref{lem:distance for fixed h}, we have
\begin{align*}
\operatorname{dist}(& \partial\{f^{\tau v_0(h_2)} \geq h_2\}, \partial\{f^{\tau v_0(h_1)}\leq h_1\}) \\
&= \operatorname{dist}( \partial\{f^{\tau v_0(h_2)} > h_2\}, \partial\{f^{\tau v_0(h_1)}> h_1\})\\
&\geq \operatorname{dist}( \partial\{f^{\tau v_0(h_1)} > h_2\}, \partial\{f^{\tau v_0(h_1)}> h_1\})
     - \operatorname{dist}( \partial\{f^{\tau v_0(h_2)} > h_2\},
     \partial\{f^{\tau v_0(h_1)} > h_2\}) \\
&\geq \frac{h_2-h_1}{c_0} - \abs{v_0(h_2)\tau-v_0(h_1)\tau} \\
&\geq \frac{h_2-h_1}{c_0} - (h_2-h_1)\frac{\tau}{h_0}
=  \left(\frac{c_0h_0}{h_0-c_0\tau}\right)^{-1}(h_2-h_1).
\end{align*}
For each fixed $x' \in \R^{n-1}$, we can similarly show that
\begin{align*}
\dist(\partial \{f^{\tau v_0(h_2)}(x', \cdot) \geq h_2\} , \partial \{f^{\tau v_0(h_1)}(x', \cdot)\leq h_1\} )
    \geq \left(\frac{c_0h_0}{h_0-c_0\tau}\right)^{-1}(h_2-h_1) >0
\end{align*}
for all $\tau <h_0/c_0$. 
Consequently,
\[
M^{v_0(h)\tau}
(U^{h_2}_{x'}) \subset M^{v_0(h)\tau}(U^{h_1}_{x'}) \quad \hbox{for all}~ 0 <h_1 < h_2~\hbox{and}~x' \in \R^{n-1}.
\]
That is, the sections $U_{x'}^h$ remain ordered and we have \eqref{eq:support}. 
Therefore, $\tilde{f}^\tau = f^{\tau v_0(h_1)}$ 
for all $\tau <h_0/c_0$ and $h>0$, so we have
\begin{align*}
\operatorname{dist}( \partial\{\tilde{f}^\tau \geq h_2\}, \partial\{\tilde{f}^\tau \leq h_1\})
    &= \operatorname{dist}( \partial\{f^{\tau v_0(h_2)} \geq h_2\}, \partial\{f^{\tau v_0(h_1)} \leq h_1\})\\
    &\geq \left(\frac{c_0h_0}{h_0-c_0\tau}\right)^{-1}(h_2-h_1).
\end{align*}
It follows from Lemma \ref{lem:distance from boundary} that $\tilde{f}^\tau$ is Lipschitz for $\tau < h_0/c_0$ with \eqref{eq:lip truncated}. 

Suppose now that $f$ is a Lipschitz function but not a good function. 
In light of Lemma \ref{lem:density}, there is an approximating sequence of functions $f_k$ that are both good and Lipschitz with $[f_k]_{\text{Lip}(\R^n)} \leq c_0$. 
By the above, we have that $\tilde{f}_k^\tau$ are also good and Lipschitz.
Consequently,
\begin{align*}
[\tilde{f}^\tau]_{\text{Lip}(\R^n)}
    &\leq [\tilde{f}_k^\tau-\tilde{f}^\tau]_{\text{Lip}(\R^n)}
        + [\tilde{f}_k^\tau]_{\text{Lip}(\R^n)}\\
   & \leq [\tilde{f}_k^\tau-\tilde{f}^\tau]_{\text{Lip}(\R^n)}
        + \frac{c_0h_0}{h_0 - c_0 \tau}
    \to \frac{c_0h_0}{h_0 - c_0 \tau} \quad \hbox{as}~k \to \infty
\end{align*}
and the result holds. 

To prove \eqref{eq:dini-bound}, we simply use \eqref{eq:lip truncated} to estimate
\[
\frac{d^+}{d\tau} [\tilde{f}^\tau]_{\text{Lip}} \bigg|_{\tau=0}
= \limsup_{\tau \to 0^+} \frac{[\tilde{f}^\tau]_{\text{Lip}}- [f]_{\text{Lip}}}{\tau}
\leq \limsup_{\tau \to 0^+} \frac{\frac{c_0h_0}{h_0-c_0\tau}- c_0}{\tau}
=\limsup_{\tau \to 0^+} \frac{c_0^2}{h_0-c_0\tau} = \frac{c_0^2}{h_0}. 
\]
\end{proof}

We will also need the following estimate on the distance between $f$ and $\tilde{f}^\tau$ in $L^1$.  
See \cite{Brock}*{Theorem 4.2} for a similar result in the setting of Remark \ref{rem:Brockspeed}. 

\begin{lem}\label{lem:Lp bound}
Let $f:\R^n \to [0,\infty)$ have compact support.  
If $f \in L^{\infty}(\R^n)$ is Lipschitz with $c_0 = [f]_{\text{Lip}}$, then 
\begin{equation}\label{eq:bound}
    \|f - \tilde{f}^\tau\|_{L^{\infty}(\R^n)} \leq \tau  [f]_{\text{Lip}(\R^n)}  \quad \hbox{for all}~\tau < h_0/c_0.
\end{equation}
Consequently, 
\[
\|f - \tilde{f}^\tau\|_{L^1(\R^n)} \leq \tau  [f]_{\text{Lip}(\R^n)} |\supp f| \quad \hbox{for all}~\tau < h_0/c_0.
\]
Moreover, the same bounds also hold for the standard symmetrization $f^\tau$.
\end{lem}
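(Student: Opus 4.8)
The plan is to prove the pointwise bound $|f(x)-\tilde f^\tau(x)|\le \tau[f]_{\mathrm{Lip}}$ for every $x\in\R^n$; both displayed estimates then follow at once, the $L^1$ one by integrating over $\supp f=\supp\tilde f^\tau$ (Proposition~\ref{prop:Lip constant}). By Lemma~\ref{lem:density} and the approximation argument already used in the proof of Proposition~\ref{prop:Lip constant}, it suffices to treat good functions $f$; then Proposition~\ref{prop:Lip constant} tells us that for $\tau<h_0/c_0$ the one–dimensional super-level sets remain ordered, so that $\{\,y:\tilde f^\tau(x',y)>h\,\}=M^{\tau v_0(h)}(U_{x'}^h)$ for all $x'$ and $h>0$, and consequently
\[
\tilde f^\tau(x',x_n)=\sup\{\,h>0: x_n\in M^{\tau v_0(h)}(U_{x'}^h)\,\},\qquad
f(x',x_n)=\sup\{\,h>0: x_n\in U_{x'}^h\,\}.
\]

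Next I would isolate two elementary facts about the one–dimensional flow $M^\sigma$ of Definition~\ref{defn:cont set}. Since each interval of an open set $U\subset\R$ is transported rigidly with total displacement at most $\sigma$, even across merging events, one obtains: \emph{(a)} $|\dist(y,M^\sigma(U))-\dist(y,U)|\le\sigma$ for every $y\in\R$; and \emph{(b)} if $(c-r,c+r)\subset U$ and $0\le\sigma<r$, then $c\in M^\sigma(U)$. Fact (b) is the subtle one: when the interval containing $c$ merges with a neighbour the enlarged interval keeps moving, so one must check that the distance from the fixed point $c$ to the (moving) boundary of the current interval decreases at speed at most one and hence stays positive throughout $[0,\sigma]$.

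With these in place, fix $x=(x',x_n)$ and set $h_2=f(x)$, $h_1=\tilde f^\tau(x)$, the argument splitting by the sign of $h_2-h_1$. If $h_1<h_2$, then for any $h\in(h_1,h_2)$ the Lipschitz bound gives $(x_n-\rho,x_n+\rho)\subset U_{x'}^h$ with $\rho=(h_2-h)/c_0>0$, while $x_n\notin M^{\tau v_0(h)}(U_{x'}^h)$ since $h>h_1$; by (b) this forces $\tau v_0(h)\ge\rho$, i.e.\ $h_2-h\le c_0\tau v_0(h)\le c_0\tau$, and letting $h\to h_1^+$ gives $h_2-h_1\le c_0\tau$. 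If $h_1>h_2$, then for $h\in(h_2,h_1)$ we have $x_n\in M^{\tau v_0(h)}(U_{x'}^h)$ whereas $\dist(x_n,U_{x'}^h)\ge(h-h_2)/c_0$ by the Lipschitz bound; fact (a) yields $0=\dist(x_n,M^{\tau v_0(h)}(U_{x'}^h))\ge(h-h_2)/c_0-\tau v_0(h)$, so $h-h_2\le c_0\tau$, and letting $h\to h_1^-$ gives $h_1-h_2\le c_0\tau$. In all cases $|f(x)-\tilde f^\tau(x)|\le c_0\tau=\tau[f]_{\mathrm{Lip}}$; integrating over $\supp f=\supp\tilde f^\tau$ gives the $L^1$ estimate. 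For the standard symmetrization $f^\tau$ the same computation applies verbatim with $v_0\equiv1$ and no restriction on $\tau$, using that $M^\tau$ preserves the measure of each section so $|\supp f^\tau|=|\supp f|$.

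The main obstacle I anticipate is obtaining the \emph{sharp} constant $\tau[f]_{\mathrm{Lip}}$ rather than a lossy $2\tau[f]_{\mathrm{Lip}}$: a crude symmetric–difference estimate, namely $\{f>h\}\triangle\{\tilde f^\tau>h\}\subset(\partial\{f>h\})_\tau$ in each section, only delivers the factor $2$. Circumventing this is precisely what forces the use of the $\sup$–characterization of $f$ and $\tilde f^\tau$ together with the precise transport facts (a)–(b), and in particular the somewhat delicate verification of (b) through the merging of intervals in Definition~\ref{defn:cont set}.
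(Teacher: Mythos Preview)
Your proof is correct and follows the same underlying idea as the paper's: the Lipschitz constant of $f$ controls height differences by distances between super-level sets, and the flow $M^\sigma$ moves each interval by at most $\sigma$. The execution differs in two respects. First, the paper invokes Lemma~\ref{lem:distance from boundary} (the level-set characterization of $[f]_{\mathrm{Lip}}$) together with Lemma~\ref{lem:distance for fixed h}, whereas you bypass Lemma~\ref{lem:distance from boundary} entirely and instead work with the $\sup$-characterization of $f$ and $\tilde f^\tau$ combined with your two transport facts (a) and (b); fact (b) is the analogue of the containment statement that the paper extracts inside the proof of Proposition~\ref{prop:Lip constant}. Second, you treat the cases $h_1<h_2$ and $h_1>h_2$ separately, while the paper asserts the second case follows ``without loss of generality''; your symmetric treatment via (a) and (b) is arguably cleaner here, since simply swapping the roles of $f$ and $\tilde f^\tau$ in the paper's argument would introduce the larger constant $[\tilde f^\tau]_{\mathrm{Lip}}$. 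Both routes yield the sharp constant $\tau[f]_{\mathrm{Lip}}$.
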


\begin{proof}
Assume, up to an approximation, that $f$ is a good function. 
Fix $x = (x',x_n) \in \supp f$ and $0 \leq \tau < h_0/c_0$. 
Let $h_1 = \tilde{f}^\tau(x)$ and $h_2 = f(x)$, and assume, without loss of generality, that $0 < h_1 < h_2$. 
Note that there is a $y_n \in \R$ such that $\tilde{f}^\tau(x', x_n) = f(x', y_n)$, which implies that $\partial \{f(x',\cdot)\leq h_1\}$ is non-empty. 
Then, from Lemma \ref{lem:distance from boundary},
\[
|f(x) - \tilde{f}^\tau(x)|
    = (h_2-h_1)
    \leq  c_0 \dist(\partial \{f(x',\cdot)\geq h_2\}, \partial \{f(x',\cdot)\leq h_1\})
\]
and with Lemma \ref{lem:distance for fixed h}, we obtain 
\begin{align*}
|f(x&) - \tilde{f}^\tau(x)|\\
    &\leq c_0 \dist(\partial \{f(x',\cdot)\geq h_2\}, \partial \{f(x',\cdot)\leq h_1\})\\
    &= c_0 \dist( \partial\{f(x',\cdot) >h_2\},  \partial\{f(x',\cdot)> h_1\})\\
    &\leq c_0 (\dist( \partial\{f(x',\cdot)> h_2\}, \partial\{\tilde{f}^\tau(x',\cdot)> h_1\})
         +\dist(  \partial\{\tilde{f}^\tau(x',\cdot)> h_1\},\partial\{f(x',\cdot)> h_1\}) )\\
    &\leq c_0 (0 + v_0(h_1)\tau) \leq c_0 \tau.
\end{align*}
Hence, the $L^\infty$ estimate holds. 
With Proposition \ref{prop:Lip constant}, we conclude that
\begin{align*}
\|f - \tilde{f}^\tau\|_{L^1(\R^n)}
    &= \int_{\supp f} |f(x) - \tilde{f}^\tau(x)| \, dx 
    \leq c_0 \tau |\supp f|. 
\end{align*}
\end{proof}

We conclude this section with an estimate proving that the $H^s$ norms of $f^\tau$ and $\tilde{f}^\tau$ can be made arbitrarily close for sufficiently small $h_0>0$ and in the case  $0 < s < 1/2$.  

\begin{lem}\label{lem:Hs-norms close}
Let $0 < s < 1/2$.  Assume that $f$ is Lipschitz with $[f]_{\text{Lip}(\R^n)} \leq c_0$ and not radially decreasing across $\{x_n=0\}$. 
Then, for any $\varepsilon>0$, there is a $h_0 = h_0(\varepsilon, n, s,f)>0$ and $\tau_0 = \tau_0(h_0,f)>0$ such that
\[
\big| \|\tilde{f}^\tau\|_{H^s(\R^n)}^2 -\|{f}^\tau\|_{H^s(\R^n)}^2 \big| < \varepsilon \tau \quad \hbox{for all}~0 < \tau \leq \tau_0.
\]
\end{lem}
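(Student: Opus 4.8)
The plan is to write $\tilde f^\tau = f^\tau + w$ with $w := \tilde f^\tau - f^\tau$, expand the squared seminorms, and show that both resulting error terms are $o(\tau)$ as $\tau\to0^+$. First I would record the relevant properties of $w$, valid for $0<\tau\le\tau_0(f)$ with $\tau<h_0/(2c_0)$: since the level sets of $\tilde f^\tau$, $f^\tau$ and $f$ at each height have equal measure (for $\tilde f^\tau$ this uses \eqref{eq:support}, which makes the truncated sections nested), all three functions have the same $L^2$ norm, so it suffices to estimate $[\tilde f^\tau]_{H^s}^2-[f^\tau]_{H^s}^2$; by Corollary \ref{cor:Lip} and Proposition \ref{prop:Lip constant}, $[f^\tau]_{\text{Lip}}\le c_0$ and $[\tilde f^\tau]_{\text{Lip}}\le 2c_0$, hence $[w]_{\text{Lip}}\le 3c_0$; by Lemma \ref{lem:Lp bound} (and its stated extension to $f^\tau$), $\|w\|_{L^\infty}\le 2c_0\tau$; and by \eqref{eq:support} together with $\|f-f^\tau\|_{L^\infty}\le c_0\tau$, the set where $w\ne0$ is contained in $\{0<f^\tau\le h_0\}\cup\{0<f\le 2h_0\}$, so that $|\supp w|\le 2\mu(2h_0)$ where $\mu(r):=|\{0<f\le r\}|\to0$ as $r\to0^+$ because $f$ is compactly supported.

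Next I would expand, noting that all the integrals below converge absolutely since $w$ and $f^\tau$ are Lipschitz with compact support and $s<1$:
\[
[\tilde f^\tau]_{H^s}^2-[f^\tau]_{H^s}^2=[w]_{H^s}^2+4\int_{\R^n}\int_{\R^n}\frac{(f^\tau(x)-f^\tau(y))\,w(x)}{|x-y|^{n+2s}}\,dx\,dy=:[w]_{H^s}^2+R,
\]
where the cross term has deliberately been antisymmetrized into this non-symmetric form. For $[w]_{H^s}^2$ I would split the region of integration into $\{|x-y|<\tau\}$ and $\{|x-y|\ge\tau\}$: on the former use $|w(x)-w(y)|\le 3c_0|x-y|$ (the resulting near-diagonal integral converges because $2-2s>0$), on the latter use $|w(x)-w(y)|\le 2\|w\|_{L^\infty}\le 4c_0\tau$, and in both regions observe that the integrand vanishes unless $x$ or $y$ lies in $\supp w$. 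This yields $[w]_{H^s}^2\le C(n,s)\,c_0^2\,|\supp w|\,\tau^{2-2s}$, and since $s<1/2$ we may write $\tau^{2-2s}=\tau\cdot\tau^{1-2s}$ with $\tau^{1-2s}\to0$.

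The cross term $R$ is the main obstacle. The naive Cauchy--Schwarz estimate $|R|\le 2[f^\tau]_{H^s}[w]_{H^s}$ only gives $O(\tau^{1-s})$, which is not $o(\tau)$; the point is that one must keep the non-symmetric form so as to exploit the small $L^\infty$ size of $w$. I would split $\{|x-y|<1\}$ versus $\{|x-y|\ge1\}$: on the former bound $|f^\tau(x)-f^\tau(y)|\le c_0|x-y|$ and $|w(x)|\le 2c_0\tau$ with $x\in\supp w$, using that $\int_{|z|<1}|z|^{1-n-2s}\,dz<\infty$ precisely because $s<1/2$; on the latter bound $|f^\tau(x)-f^\tau(y)|\le 2\|f\|_{L^\infty}$ and use $\int_{|z|\ge1}|z|^{-n-2s}\,dz<\infty$. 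Both pieces are dominated by $C(n,s,f)\,\tau\,|\supp w|$, hence $|R|\le C(n,s,f)\,\tau\,\mu(2h_0)$.

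Finally I would choose the constants in order: given $\varepsilon>0$, first pick $h_0=h_0(\varepsilon,n,s,f)>0$ so small that $2C(n,s,f)\,\mu(2h_0)<\varepsilon/2$ (possible since $\mu(r)\to0$); then, with $h_0$ fixed so that $\nu_0:=2\mu(2h_0)$ bounds $|\supp w|$, pick $\tau_0=\tau_0(h_0,f)\le\min\{\tau_0(f),\,h_0/(2c_0)\}$ so small that $C(n,s)\,c_0^2\,\nu_0\,\tau_0^{1-2s}<\varepsilon/2$. Then for all $0<\tau\le\tau_0$ we get $\big|[\tilde f^\tau]_{H^s}^2-[f^\tau]_{H^s}^2\big|\le[w]_{H^s}^2+|R|<\varepsilon\tau$, which is the claim. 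The hypothesis $s<1/2$ enters in two essential places: it makes $\tau^{2-2s}=o(\tau)$ in the bound for $[w]_{H^s}^2$, and it is exactly what renders the kernel integrable near the diagonal against the Lipschitz bound on $f^\tau$ in the cross term $R$.
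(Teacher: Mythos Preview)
Your argument is correct and follows essentially the same strategy as the paper: both localize to the set where $\tilde f^\tau\ne f^\tau$, exploit the Lipschitz bounds on $f^\tau$ and $\tilde f^\tau$ to gain a factor $|x-y|$ near the diagonal (which is where $s<1/2$ is used), and control the far region via the $L^\infty$ bound from Lemma~\ref{lem:Lp bound} together with integrability of the kernel. The paper factors $(\tilde f^\tau(x)-\tilde f^\tau(y))^2-(f^\tau(x)-f^\tau(y))^2=(a+b)(a-b)$ directly and splits at $|x-y|=R$ (the support radius), handling the long-range piece via a convolution representation and an estimate from \cite{CHVY}; you instead expand $[\tilde f^\tau]_{H^s}^2=[f^\tau]_{H^s}^2+[w]_{H^s}^2+R$ and split at $|x-y|=\tau$ and $|x-y|=1$, which makes the long-range estimate more self-contained. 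One small remark: your bound $[w]_{H^s}^2\le C\,|\supp w|\,\tau^{2-2s}$ introduces a $\tau^{1-2s}$ factor, so as written your $\tau_0$ depends on $\varepsilon$ as well as $h_0$; to match the stated dependence $\tau_0=\tau_0(h_0,f)$ you can simply use $\tau\le h_0/(2c_0)$ to absorb $\tau^{1-2s}\le (h_0/(2c_0))^{1-2s}$ into the choice of $h_0$.
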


\begin{proof}
Fix $\varepsilon>0$, and let $h_0>0$ to be determined. 
From Proposition \ref{prop:Lip constant}, we have
\[
[\tilde{f}^\tau]_{\text{Lip}(\R^n)}\leq \frac{c_0h_0}{h_0-c_0\tau} \leq 2c_0 \quad \hbox{for all}~0 \leq \tau \leq \frac{h_0}{2c_0} =: \tau_0. 
\]
Moreover, $\tilde{f}^\tau = f^\tau$ in $\{f^\tau>h_0\} \cup( \{\tilde{f}^\tau = 0\} \cap \{{f}^\tau = 0\})$, so that
\begin{align*}
\|\tilde{f}^\tau\|_{H^s(\R^n)}^2 -\|{f}^\tau\|_{H^s(\R^n)}^2
    &= \int_{\R^n} \int_{A_{h_0}} \frac{(\tilde{f}^\tau(x)-\tilde{f}^\tau(y))^2 -  ({f}^\tau(x)-{f}^\tau(y))^2}{|x-y|^{n+2s}} \, dx \, dy
\end{align*}
where 
\[
A_{h_0} :=( \{\tilde{f}^\tau<h_0\} \cup \{f^\tau<h_0\} )\cap (\supp \tilde{f}^\tau \cup \supp {f}^\tau).
\]
That is, $A_{h_0}$ is the set in which $\tilde{f}^\tau \not= f^\tau$.
To estimate the integral, we split into short and long-range interactions. 
Let $R>0$ be such that $\supp f^\tau \cup \supp \tilde{f}^\tau \subset B_R$ and write
\begin{align*}
\|\tilde{f}^\tau\|_{H^s(\R^n)}^2 -\|{f}^\tau\|_{H^s(\R^n)}^2
    &=  \int_{A_{h_0}} \int_{|x-y|<R} \frac{(\tilde{f}^\tau(x)-\tilde{f}^\tau(y))^2 -  ({f}^\tau(x)-{f}^\tau(y))^2}{|x-y|^{n+2s}} \, dy \, dx\\
 &\quad +\int_{A_{h_0}}\int_{|x-y|\geq R}  \frac{(\tilde{f}^\tau(x)-\tilde{f}^\tau(y))^2 -  ({f}^\tau(x)-{f}^\tau(y))^2}{|x-y|^{n+2s}} \, dy \, dx\\
 &=: I + II. 
\end{align*}

First considering $I$, 
notice that the support of the integrand is contained in the set
\begin{align*}
\{(x,y): |x|<R~\hbox{or}&~|y|<R\} \cap \{(x,y): |x-y|<R\} \\
 &\subset \{(x,y) : |x| <2R~\hbox{and}~|y|<2R\}\cap  \{(x,y): |x-y|<R\}.
\end{align*}
Therefore,
\begin{align*}
I
    &= \int_{A_{h_0} \cap B_{2R}} \int_{B_{2R} \cap B_R(x)} \frac{(\tilde{f}^\tau(x)-\tilde{f}^\tau(y))^2 -  ({f}^\tau(x)-{f}^\tau(y))^2}{|x-y|^{n+2s}} \, dy \, dx. 
\end{align*}
With the Lipschitz bounds for $\tilde{f}^\tau$ and $f^\tau$, we estimate
\begin{align*}
|(\tilde{f}^\tau(x)&-\tilde{f}^\tau(y))^2 -  ({f}^\tau(x)-{f}^\tau(y))^2|\\
    &=|(\tilde{f}^\tau(x)-\tilde{f}^\tau(y)) +  ({f}^\tau(x)-{f}^\tau(y))|
    |(\tilde{f}^\tau(x) -  {f}^\tau(x))-(\tilde{f}^\tau(y)-{f}^\tau(y))|\\
    &\leq ([\tilde{f}^\tau]_{\text{Lip}(\R^n)} + [{f}^\tau]_{\text{Lip}(\R^n)})|x-y|
    |(\tilde{f}^\tau(x) -  {f}^\tau(x))-(\tilde{f}^\tau(y)-{f}^\tau(y))|\\
    &\leq 3c_0|x-y| |(\tilde{f}^\tau(x) -  {f}^\tau(x))-(\tilde{f}^\tau(y)-{f}^\tau(y))|. 
\end{align*}
Therefore,
\begin{equation}\label{eq:1}
\begin{aligned}
|I|
    &\leq 3c_0   \int_{A_{h_0} \cap B_{2R}} \int_{B_{2R}\cap B_R(x)}\frac{|(\tilde{f}^\tau(x) -  {f}^\tau(x))-(\tilde{f}^\tau(y)-{f}^\tau(y))|}{|x-y|^{n+2s-1}} \, dy \, dx \\
    &\leq 3c_0 \bigg[ 
    \int_{A_{h_0} \cap B_{2R}} |\tilde{f}^\tau(x) -  {f}^\tau(x)|\bigg(\int_{B_{2R} \cap B_R(x)}\frac{1}{|x-y|^{n+2s-1}} \, dy \bigg) dx \\
    &\quad+
    \int_{B_{2R}}|\tilde{f}^\tau(y)-{f}^\tau(y)|\bigg(\int_{A_{h_0} \cap B_{2R} \cap B_R(y)} \frac{1}{|x-y|^{n+2s-1}} \, dx \bigg) \, dy \bigg].
\end{aligned}
\end{equation}
Since $0< s < 1/2$, we have
\begin{equation}\label{eq:2}
\begin{aligned}
\int_{B_{2R} \cap B_R(x)}\frac{1}{|x-y|^{n+2s-1}} \, dy 
    &\leq \int_{B_{R(x)}}\frac{1}{|x-y|^{n+2s-1}} \, dy \\
    &= \int_{B_R}\frac{1}{|z|^{n+2s-1}} \, dz 
    = c_{n,s,R}< \infty
\end{aligned}
\end{equation}
and similarly
\begin{equation}\label{eq:3}
\int_{A_{h_0} \cap B_{2R} \cap B_R(y)} \frac{1}{|x-y|^{n+2s-1}} \, dx
    \leq \int_{B_R(y)} \frac{1}{|x-y|^{n+2s-1}} \, dx = c_{n,s,R} < \infty.
\end{equation}
Using again that $\tilde{f}^\tau = f^\tau$ in $\R^n \setminus A_{h_0}$, we note that
\begin{equation}\label{eq:4}
 \int_{B_{2R}}|\tilde{f}^\tau(y)-{f}^\tau(y)| \, dy 
    =  \int_{A_{h_0} \cap B_{2R}}|\tilde{f}^\tau(y)-{f}^\tau(y)| \,dy. 
\end{equation}
Therefore, from \eqref{eq:1}, \eqref{eq:2}, \eqref{eq:3}, and \eqref{eq:4}, 
\begin{align*}
|I|
    &\leq C_{n,s,f,R} \bigg[ 
    \int_{A_{h_0} \cap B_{2R}} |\tilde{f}^\tau(x) -  {f}^\tau(x)| dx+
    \int_{B_{2R}}|\tilde{f}^\tau(y)-{f}^\tau(y)| \, dy \bigg]\\
    &\leq 2C_{n,s,f,R}  
    \int_{A_{h_0} } |\tilde{f}^\tau(x) -  {f}^\tau(x)| dx.
\end{align*}
By Lemma \ref{lem:Lp bound}, we arrive at
\begin{align*}
|I|
    &\leq C_{n,s,f}  |A_{h_0}| \tau.
\end{align*}

Regarding $II$, we expand the squares to obtain 
\begin{equation}\label{eq:II1}
\begin{aligned}
II
    &= \int_{A_{h_0}}\int_{|x-y|\geq R}  
     \frac{(\tilde{f}^\tau(x))^2- (f^\tau(x))^2}{|x-y|^{n+2s}} \, dy \, dx \\
     &\quad + \int_{A_{h_0}}\int_{|x-y|\geq R}  
     \frac{(\tilde{f}^\tau(y))^2- (f^\tau(y))^2}{|x-y|^{n+2s}} \, dy \, dx \\
     &\quad - 2 \int_{A_{h_0}}\int_{|x-y|\geq R}
    \frac{\tilde{f}^\tau(x)\tilde{f}^\tau(y) -  {f}^\tau(x){f}^\tau(y)}{|x-y|^{n+2s}} \, dy \, dx. 
\end{aligned}
\end{equation}
First observe that
\begin{equation}\label{eq:II2}
\begin{aligned}
\int_{A_{h_0}}\int_{|x-y|\geq R}  
     \frac{(\tilde{f}^\tau(x))^2- (f^\tau(x))^2}{|x-y|^{n+2s}} \, dy \, dx
     &= \int_{A_{h_0}}\big[(\tilde{f}^\tau(x))^2- (f^\tau(x))^2 \big]\bigg(\int_{|z|\geq R}  
     \frac{1}{|z|^{n+2s}} \, dy \bigg) \, dx\\
     &=C_{n,s,R} \big( \|\tilde{f}^\tau\|_{L^2(A_{h_0})}^2-\|f^\tau\|_{L^2(A_{h_0})}^2\big)
\end{aligned}
\end{equation}
and similarly, using Lemma~\ref{lem:Lp bound},
\begin{equation}\label{eq:II3}
\begin{aligned}
\int_{A_{h_0}}\int_{|x-y|\geq R}  
     &\frac{(\tilde{f}^\tau(y))^2- (f^\tau(y))^2}{|x-y|^{n+2s}} \, dy \, dx 
     = \int_{A_{h_0}}\int_{|x-y|\geq R}  
     \frac{(\tilde{f}^\tau(y))^2- (f^\tau(y))^2}{|x-y|^{n+2s}} \, dx \, dy\\
     &= \int_{A_{h_0}}\big[(\tilde{f}^\tau(y)- f^\tau(y))(\tilde{f}^\tau(y)+ f^\tau(y)) \big] \bigg(\int_{|z|\geq R}  
     \frac{1}{|z|^{n+2s}} \, dz \bigg) \, dy\\
     &\le C_{n,s,R}\|f\|_{L^\infty} \|\tilde{f}^\tau-f^\tau\|_{L^1(A_{h_0})}\\
     &\le C_{n,s,f,R}|A_{h_0}| \tau.
\end{aligned}
\end{equation}
Consequently, from \eqref{eq:II1}, \eqref{eq:II2}, and \eqref{eq:II3},
\begin{align*}
|II|
    &\leq   C_{n,s,f,R}|A_{h_0}| \tau
    + \bigg|\int_{A_{h_0}}
    \tilde{f}^\tau(x)(W_R*\tilde{f}^\tau)(x) -  {f}^\tau(x)(W_R*{f}^\tau)(x) \, dy \, dx\bigg|
\end{align*}
where $W_R(x) = |x|^{-n-2s} \chi_{\R^n\setminus B_R(0)}(x)$. 
Using that
\[
|\nabla (W_R*\tilde{f}^\tau)(x)| 
    \leq \int_{\R^n} W_R(y) |\nabla \tilde{f}^\tau(x-y)| \, dy 
    \leq 2c_0 \int_{\R^n} W_R(y) \, dy = c_{n,s,f,R},
\]
and similarly for $|\nabla W_R*f^\tau|$, we can follow the proof of \cite{CHVY}*{Proposition 2.8} to show that
\begin{align*}
\bigg|\int_{A_{h_0}}
    \tilde{f}^\tau(x)(W_R*\tilde{f}^\tau)(x) -  {f}^\tau(x)(W_R*{f}^\tau)(x) \, dy \, dx\bigg|
        \leq C_{n,s,f,R} \|\min\{f, h_0\}\|_{L^1(\R^n)} \tau.
\end{align*}
Summarizing, we have
\begin{align*}
\left|\|\tilde{f}^\tau\|_{H^s(\R^n)}^2 -\|{f}^\tau\|_{H^s(\R^n)}^2\right|
    &\leq |I| + |II|\\
    &\leq C_{n,s,f,R}  \bigg(|A_{h_0}| + \|\min\{f, h_0\}\|_{L^1(\R^n)}\bigg) \tau \\
    &< \varepsilon \tau
\end{align*}
for $h_0$ sufficiently small. 
\end{proof}

\subsection{Proof of Theorem \ref{thm:uniqueness}}

The proof relies on two results regarding the nonlocal energy 
\[
    \mathcal{E}_s(v)=c_{n,s}[v]_{H^s}^2+\int_{\R^n} |x|^2v(x)\;dx
\]
where we recall that $c_{n,s}[v]_{H^s}^2 =\langle (-\Delta)^s v, v \rangle_{L^2(\R^n)}$. 
First, we will show that small perturbations of stationary solutions $v$ to the fractional thin film equation that preserve the support of $v$ correspond to small perturbations in the energy.

\begin{lem}\label{lem:perturbation}
Assume that $v$ is Lipschitz and satisfies the stationary equation
\begin{equation}\label{eq:EL}
\begin{cases}
(-\Delta)^s v = \sum_i \lambda_i \chi_{\mathcal{P}_i}(y) - \frac{\beta}{2} |y|^2 & \hbox{in}~\supp(v) \subset \R^n\\
v \geq 0 & \hbox{in}~\R^n.
\end{cases}
\end{equation}
Let $v^\tau$ be a perturbation of $v$ which is continuous in the $C^\alpha$ norm for every $0 <\alpha<1$ and preserves mass in each connected component. Then
    $$
    \lim_{\tau\to 0^+}\frac{\mathcal{E}_s(v^\tau)-\mathcal{E}_s(v)}{\tau}=0.
    $$
\end{lem}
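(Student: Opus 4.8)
The plan is to read \eqref{eq:EL} as the Euler--Lagrange equation of $\mathcal E_s$ under the constraint that the mass on each connected component of $\supp v$ be fixed, so that the energy difference $\mathcal E_s(v^\tau)-\mathcal E_s(v)$ splits into a first--order term which vanishes by criticality and a quadratic $H^s$ remainder which I will bound by $o(\tau)$. Throughout I will use only the following features of the perturbation, all of which hold in the application where $v^\tau=\tilde v^\tau$ is the truncated continuous Steiner symmetrization of $v$: by Proposition~\ref{prop:Lip constant}, $\supp v^\tau=\supp v$ and there is a $\tau$--uniform Lipschitz bound $[v^\tau]_{\mathrm{Lip}}\le 2c_0$, $c_0:=[v]_{\mathrm{Lip}}$, for all small $\tau$; and by Lemma~\ref{lem:Lp bound}, $\|v-v^\tau\|_{L^\infty(\R^n)}\le c_0\tau$, hence $\|v-v^\tau\|_{L^1(\R^n)}\le c_0\tau|\supp v|$. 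Writing $w:=v^\tau-v$, we then have $\supp w\subseteq\supp v=\bigcup_i\mathcal P_i$, and by hypothesis $\int_{\mathcal P_i}w\,dx=0$ for each $i$; also $v^\tau$ is Lipschitz with compact support, so $\mathcal E_s(v^\tau)<\infty$.

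First I would expand the energy. Since $v$ is Lipschitz with compact support and $0<s<1/2$, the function $(-\Delta)^s v$ is bounded, $v,v^\tau\in H^s(\R^n)$, and $c_{n,s}\langle v,w\rangle_{\dot H^s}=\langle(-\Delta)^s v,w\rangle_{L^2(\R^n)}$. Combining the polarization identity $[v^\tau]_{H^s}^2-[v]_{H^s}^2=[w]_{H^s}^2+2\langle v,w\rangle_{\dot H^s}$ with the linearity of $v\mapsto\int_{\R^n}|x|^2v\,dx$ gives
\[
\mathcal E_s(v^\tau)-\mathcal E_s(v)=c_{n,s}\,[w]_{H^s}^2+\int_{\supp v}w\,\big(2(-\Delta)^s v+|x|^2\big)\,dx .
\]
The factor $2(-\Delta)^s v+|x|^2$ equals the first variation $\delta\mathcal E_s(v)$, which by \eqref{eq:EL} is constant on each component $\mathcal P_i$, say equal to $\mu_i$; since $\supp w\subseteq\bigcup_i\mathcal P_i$ and $\int_{\mathcal P_i}w\,dx=0$, the last integral is $\sum_i\mu_i\int_{\mathcal P_i}w\,dx=0$. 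Hence $\mathcal E_s(v^\tau)-\mathcal E_s(v)=c_{n,s}[w]_{H^s}^2$, and it remains to prove $[w]_{H^s}^2=o(\tau)$.

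For the remainder I would interpolate $\dot H^s$ between $L^2$ and $\dot H^1$: by Plancherel and H\"older's inequality with conjugate exponents $1/s$ and $1/(1-s)$,
\[
[w]_{\dot H^s}^2\le C_{n,s}\,\|\nabla w\|_{L^2(\R^n)}^{2s}\,\|w\|_{L^2(\R^n)}^{2(1-s)} .
\]
Here $\|\nabla w\|_{L^2(\R^n)}^2\le[w]_{\mathrm{Lip}}^2|\supp v|\le 9c_0^2|\supp v|$ is bounded independently of $\tau$, whereas $\|w\|_{L^2(\R^n)}^2\le\|w\|_{L^\infty(\R^n)}^2|\supp v|\le c_0^2|\supp v|\,\tau^2$; consequently $[w]_{H^s}^2\le C\,\tau^{2(1-s)}$ (the $L^2$ part of the full $H^s$ norm, if present, is $O(\tau^2)$ and is absorbed). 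Since $s<1/2$ gives $2(1-s)>1$, it follows that
\[
\frac{\mathcal E_s(v^\tau)-\mathcal E_s(v)}{\tau}=c_{n,s}\,\frac{[w]_{H^s}^2}{\tau}\le C\,\tau^{\,1-2s}\xrightarrow[\tau\to0^+]{}0 ,
\]
which is the claim.

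The hard part is not the algebra but securing a \emph{rate}: continuity of $\tau\mapsto v^\tau$ in the $C^\alpha$ norms alone only yields $[w]_{H^s}^2\to0$, which is insufficient, so one must genuinely invoke the quantitative estimates of Proposition~\ref{prop:Lip constant} and Lemma~\ref{lem:Lp bound}, namely order--$\tau$ decay of $\|v-v^\tau\|_{L^\infty}$ together with a $\tau$--uniform Lipschitz bound. The hypothesis $s<1/2$ enters twice and both uses are essential: it makes $(-\Delta)^s v$ a bounded function, so that the pairing in the expansion is the honest integral against the right-hand side of \eqref{eq:EL}, and it gives $2(1-s)>1$, which is exactly what upgrades the quadratic remainder from $O(\tau)$ to $o(\tau)$.
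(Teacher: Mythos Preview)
Your overall architecture matches the paper's: expand $\mathcal E_s(v^\tau)-\mathcal E_s(v)$ into a first-order term in $w=v^\tau-v$ plus a quadratic $H^s$ remainder, kill the first-order term using \eqref{eq:EL} together with $\int_{\mathcal P_i}w=0$, and show the remainder is $o(\tau)$. The paper writes the identity as
\[
\frac{\mathcal E_s(v^\tau)-\mathcal E_s(v)}{\tau}
=\int_{\R^n}\Big((-\Delta)^sv+\tfrac{\beta}{2}|y|^2\Big)\frac{w}{\tau}\,dy
+\frac12\int_{\R^n}(-\Delta)^s w\,\frac{w}{\tau}\,dy,
\]
and then argues differently from you on the remainder: it uses only the stated hypothesis of $C^\alpha$-continuity of $\tau\mapsto v^\tau$ to get $(-\Delta)^s w\to0$ in $C^0$, and pairs this with the bound $\|w/\tau\|_{L^1}\le c_0|\supp v|$ from Lemma~\ref{lem:Lp bound}. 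Your route---Gagliardo--Nirenberg interpolation $[w]_{\dot H^s}^2\le C\|\nabla w\|_{L^2}^{2s}\|w\|_{L^2}^{2(1-s)}$ combined with the uniform Lipschitz bound and $\|w\|_{L^\infty}\le c_0\tau$---is perfectly valid and in fact yields the sharper quantitative rate $O(\tau^{1-2s})$, but it proves a slightly different statement: you are importing the specific estimates of Proposition~\ref{prop:Lip constant} and Lemma~\ref{lem:Lp bound} rather than working from the abstract $C^\alpha$-continuity hypothesis of the lemma. This is fine for the application to Theorem~\ref{thm:uniqueness}, and you are explicit about it, but it is worth noting that the paper's argument stays closer to the lemma as stated.

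One caution on the bookkeeping: your expression $2(-\Delta)^sv+|x|^2$ for the first variation is off by constants (the potential in $\mathcal E_s$ carries a factor $\beta/2$, and the quadratic seminorm a normalization so that the Euler--Lagrange equation \eqref{eq:EL} reads $(-\Delta)^sv+\tfrac{\beta}{2}|y|^2=\lambda_i$). The paper's own display has the matching constants; make sure your polarization and the energy normalization are consistent so that the linear integrand is exactly $(-\Delta)^sv+\tfrac{\beta}{2}|y|^2$, which is what \eqref{eq:EL} declares constant on each $\mathcal P_i$.
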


\begin{proof}
From the definition of $\mathcal{E}_s$ and that $(-\Delta)^s$ is self-adjoint, we have
\begin{align*}
\frac{\mathcal{E}_s(v^\tau)-\mathcal{E}_s(v)}{\tau}
    &=\int_{\R^n}  \bigg(\frac{1}{2}(-\Delta)^s(v^\tau+v)+\frac{\beta}{2}|y|^2\bigg) \frac{(v^\tau-v)}{\tau} \, dy\\
    &=\int_{\R^n}  \bigg((-\Delta)^sv+\frac{\beta}{2}|y|^2\bigg) \frac{(v^\tau - v)}{\tau} \, dy +\frac{1}{2}\int_{\R^n} (-\Delta)^s(v^\tau-v) \frac{(v^\tau-v)}{\tau} \, dy.
\end{align*}
Using that $v$ solves \eqref{eq:EL} and that $v^\tau$ preserves the mass of $v$ in each $\mathcal{P}_i$, we notice that the first term vanishes
$$
\int_{\R^n}  \bigg((-\Delta)^sv+\frac{\beta}{2}|y|^2\bigg) \frac{(v^\tau - v)}{\tau} \, dy =\sum_i \lambda_i \frac{1}{\tau}\int_{\mathcal{P}_i}(v^\tau-v) \, dy =0. 
$$
Using Lemma \ref{lem:Lp bound} and that
$$
(-\Delta)^s(v^\tau-v)\to 0\qquad \mbox{in $C^0(\R^n)$}
$$
we take $\tau\to 0^+$ in the second term to complete the proof. 
\end{proof}

Next, we show that if the perturbation is precisely the truncated Steiner symmetrization of $v$, then the energy is in fact strictly decreasing. 

\begin{prop}\label{prop:decreasingE}
Assume $0< s<1/2$, and $v$ is Lipschitz, non-negative with compact support. If $v$ is not radially decreasing, then there exist constants $h_0,\gamma, \tau_0>0$ such that
\[
\mathcal{E}_s(\tilde{v}^{\tau}) \leq \mathcal{E}_s(v) - \frac{ c_{n,s}}{2}\gamma \tau \quad \hbox{for all}~ 0 < \tau < \tau_0
\]
where $\tilde{v}^{\tau}$ is the continuous Steiner symmetrization truncated at height $h_0$.
\end{prop}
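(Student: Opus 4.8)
The plan is to combine the already-established first-order decay of the Gagliardo seminorm under the \emph{untruncated} continuous Steiner symmetrization (Theorem \ref{thm:main2} with $p=2$, equivalently Proposition \ref{prop:e-main2}) with the estimate of Lemma \ref{lem:Hs-norms close} comparing $\|\tilde v^\tau\|_{H^s}^2$ to $\|v^\tau\|_{H^s}^2$, and with the observation that the confining potential term contributes nonpositively to first order. First I would fix the hyperplane $H$: since $v$ is not radially decreasing about any center, Theorem \ref{thm:main2} (applied in the equivalent form of Proposition \ref{prop:e-main2}, after translating/rotating so that the nonlinear center of mass condition \eqref{eq:centerofmass} holds) yields constants $\gamma = \gamma(n,s,v)>0$, $\tau_1>0$, and a direction $e_n$ such that
\[
c_{n,s}[v^\tau]_{H^s(\R^n)}^2 \leq c_{n,s}[v]_{H^s(\R^n)}^2 - c_{n,s}\gamma \tau \quad \hbox{for all}~0 \leq \tau \leq \tau_1,
\]
where $v^\tau$ is the ordinary continuous Steiner symmetrization about $H$. (Here I use that $v$ Lipschitz and compactly supported implies $v \in L^1(\R^n)\cap C(\R^n)$, so the hypotheses of Theorem \ref{thm:main2} are met.)

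Next I would control the potential term. Because continuous Steiner symmetrization moves each level set of $v$ either toward the hyperplane $H$ or leaves it fixed, and $|x|^2$ (equivalently, its section in the $x_n$-direction, $x_n \mapsto x_n^2$, for each fixed $x'$) is symmetric and increasing away from $\{x_n=0\}$, the quantity $\int_{\R^n}|x|^2 v^\tau(x)\,dx$ is nonincreasing in $\tau$; this is the linear-potential analogue of \eqref{eq:preserves Lp}-type monotonicity and follows from a layer-cake computation level set by level set (or directly by applying the one-dimensional rearrangement inequality to $\chi_{M^\tau(U^h_{x'})}$ against the weight $x_n^2$). The same monotonicity holds verbatim for the \emph{truncated} symmetrization $\tilde v^\tau$, since slowing down low level sets only slows the motion without reversing any direction; hence
\[
\int_{\R^n}|x|^2 \tilde v^\tau(x)\,dx \leq \int_{\R^n}|x|^2 v(x)\,dx \quad \hbox{for all}~\tau \geq 0.
\]
Thus this term never hurts us and can simply be discarded.

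It remains to transfer the seminorm decay from $v^\tau$ to $\tilde v^\tau$. Apply Lemma \ref{lem:Hs-norms close} with $\varepsilon := c_{n,s}\gamma/4$ (recall $0<s<1/2$ and $v$ is Lipschitz and not radially decreasing across $\{x_n=0\}$): this produces $h_0 = h_0(\varepsilon,n,s,v)>0$ and $\tau_2 = \tau_2(h_0,v)>0$ with
\[
\big|\,c_{n,s}\|\tilde v^\tau\|_{H^s(\R^n)}^2 - c_{n,s}\|v^\tau\|_{H^s(\R^n)}^2\,\big| < \tfrac{c_{n,s}\gamma}{4}\,\tau \quad \hbox{for all}~0 < \tau \leq \tau_2.
\]
Setting $\tau_0 := \min\{\tau_1,\tau_2\}$ and combining the three displays (using $\|\cdot\|_{H^s}^2 = [\cdot]_{H^s}^2$ up to the fixed constant $c_{n,s}$, and that $c_{n,s}[v]_{H^s}^2 = \mathcal{E}_s(v) - \int |x|^2 v\,dx$), we get for $0 < \tau < \tau_0$
\[
\mathcal{E}_s(\tilde v^\tau) \leq c_{n,s}[v^\tau]_{H^s}^2 + \tfrac{c_{n,s}\gamma}{4}\tau + \int_{\R^n}|x|^2 v\,dx \leq \mathcal{E}_s(v) - c_{n,s}\gamma\tau + \tfrac{c_{n,s}\gamma}{4}\tau \leq \mathcal{E}_s(v) - \tfrac{c_{n,s}}{2}\gamma\tau,
\]
which is the claim (with a little room to spare, $\tfrac34$ vs. $\tfrac12$).

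The main obstacle is the interplay between the $h_0$-truncation and the two quantitative estimates: $\gamma$ in Theorem \ref{thm:main2} depends on $v$ but \emph{not} on $h_0$, whereas the error in Lemma \ref{lem:Hs-norms close} depends on $h_0$ and is only small for $h_0$ small — so the order of quantifiers matters and one must choose $\gamma$ first (from the untruncated result) and only then shrink $h_0$; fortunately Lemma \ref{lem:Hs-norms close} is stated in exactly this form. A secondary point requiring care is that the restriction $0<s<1/2$ is genuinely needed here, inherited from Lemma \ref{lem:Hs-norms close} (where it guarantees integrability of $|x-y|^{-(n+2s-1)}$ near the diagonal), and it is consistent with the hypothesis of the proposition. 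One should also double-check that Theorem \ref{thm:main2} applies to $v$ despite $v$ possibly having disconnected support and being only Lipschitz: the theorem only requires $v\in L^1\cap C$ positive (or nonnegative, after the regularization in Proposition \ref{prop:e-main2}) and not radially decreasing, all of which hold.
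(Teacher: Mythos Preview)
Your proposal is correct and follows essentially the same route as the paper: split $\mathcal{E}_s(\tilde v^\tau)-\mathcal{E}_s(v)$ into the seminorm part and the potential part, discard the potential part via the layer-cake monotonicity for the truncated symmetrization, and handle the seminorm part by combining Theorem~\ref{thm:main2} for the untruncated $v^\tau$ with Lemma~\ref{lem:Hs-norms close} (choosing $\varepsilon$ a fixed fraction of $\gamma$). The paper uses $\varepsilon=\gamma/2$ where you use $c_{n,s}\gamma/4$, and your extra commentary on the order of quantifiers ($\gamma$ first, then $h_0$) makes explicit what the paper leaves implicit.
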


\begin{proof}
Begin by writing 
\begin{align*}
\mathcal{E}_s(\tilde{v}^\tau)
     - \mathcal{E}_s(v)
     &= c_{n,s}\big([\tilde{v}^\tau]_{H^s(\R^n)} - [v]_{H^s(\R^n)}\big)
             + \frac{\beta}{2}\int_{\R^n}\abs{y}^2(\tilde{v}^\tau - v) \, dy.
\end{align*}
Re-arranging, we have
\begin{align*}
\int_{\R^n}\abs{y}^2(\tilde{v}^\tau(y)-v(y)) \, dy
    &= \int_{\R^n}\abs{y}^2 \int_0^\infty (\chi_{M^{v_0(h)\tau}(U_{y'}^h)}(y_n)- \chi_{U_{y'}^h}(y_n))\, dh \,  dy\\
     &= \int_{\R^{n-1}} \int_0^{\infty} \bigg(\int_{M^{v_0(h)\tau}(U_{y'}^h)} \abs{y_n}^2  \, dy_n - \int_{U_{y'}^h} |y_n|^2 \, dy_n \bigg) \, dh \, dy' \\
    &\leq0,
\end{align*}
where the last inequality follows by the definition of the symmetrization. On the other hand, by Theorem \ref{thm:main2} and Lemma \ref{lem:Hs-norms close} with $\varepsilon = \gamma/2$, there are $h_0,\tau_0>0$ such that 
\begin{align*}
[\tilde{v}^\tau]^2_{H^s(\R^n)} - [v]^2_{H^s(\R^n)}
    &=  \big([\tilde{v}^\tau]^2_{H^s(\R^n)} -  [v^\tau]^2_{H^s(\R^n)}\big)
            + \big([v^\tau]^2_{H^s(\R^n)} - [v]^2_{H^s(\R^n)}\big)\\
    &\leq \frac{\gamma}{2}\tau - \gamma\tau = - \frac{\gamma}{2}\tau\qquad\mbox{for any $\tau<\tau_0$.}
\end{align*}
\end{proof}

\begin{proof}[Proof of  Theorem \ref{thm:uniqueness}]
Assume, by way of contradiction, that $v$ is not radially decreasing and let $\tilde{v}^\tau$ denote the continuous Steiner symmetrization of $v$ truncated at height $h_0 >0$. 
By Proposition \ref{prop:Lip constant}, we have that $\tilde{v}^\tau$ is Lipschitz for sufficiently small $0 \leq \tau < c_0/h_0$ and preserves the mass of $v$ in each connected component. 
Moreover, by Lemma \ref{lem:Lp bound},  $\tilde{v}^\tau$ is continuous in $\tau$ in the $C^\alpha$ norm for any $0 < \alpha <1$. 
Hence, the hypotheses of Lemma \ref{lem:perturbation} are satisfied so that, for all $\varepsilon>0$, there is $\delta>0$ such that
\[
 - \varepsilon \tau < \mathcal{E}_s(\tilde{v}^\tau) - \mathcal{E}(v)< \tau \varepsilon \quad \hbox{for all}~ 0 \leq \tau < \delta. 
\]
On the other hand, Proposition \ref{prop:decreasingE} guarantees that
\[
\mathcal{E}_s(\tilde{v}^\tau) - \mathcal{E}(v) < - \frac{c_{n,s}}{2}\gamma \tau
\quad \hbox{for all}~0 \leq \tau < \tau_0.
\]
We arrive at a contradiction by choosing $0 < \varepsilon < c_{n,s}\gamma/2$. Therefore, it must be that $v$ is radially decreasing. 
Consequently, $\supp v$ is a single connected component. 

To show uniqueness, up to the scaling, we follow the argument in the proof of \cite{delgadino2022uniqueness}*{Theorem 1.1}. 
Consider two radially symmetric critical points $v_0, v_1$ that are Lipschitz, and let $\{v_t\}_{t\in[0,1]}$ be the height function interpolation presented in Section~\ref{sec:interp}. 
Using that $v_0,v_1 \in C^{0,1}(\R^n)$, we can use Proposition~\ref{prop:interW1p} to conclude that $\{v_t\}_{t\in[0,1]}$ is continuous in $C^{\alpha}(\R^n)$ for any $0<\alpha<1$. 
Recall the upper and lower Dini derivatives in \eqref{eq:dini}. 
We claim that
$$
\frac{d^+}{dt}\mathcal{E}_s(v_t)\bigg|_{t=0}=\lim_{t\to 0^+}\frac{\mathcal{E}_s(v_t) - \mathcal{E}(v_0)}{t}=0=\lim_{t\to 1^-}\frac{\mathcal{E}_s(v_t) - \mathcal{E}(v_1)}{1-t}=\frac{d^-}{dt}\mathcal{E}_s(v_t)\bigg|_{t=1}.
$$ 
Following the proof of \cite{delgadino2022uniqueness}*{Proposition 4.4}, it is enough to show that
\begin{equation}\label{eq:t-limit}
\frac{d^+}{dt}[v_t]_{H^s}^2\bigg|_{t=0} 
=\int_{\R^n} (-\Delta)^s v_0 \frac{d v_\tau}{d \tau} \bigg|_{\tau=0}\;dx
\end{equation}
as the potential part of the energy follows in the same way. Since $(-\Delta)^s$ is self-adjoint,
$$
\frac{d^+}{dt}[v_t]_{H^s}^2\bigg|_{t=0} 
=\lim_{t\to 0^+}\frac{1}{t}\int_0^t\int_{\R^n} (-\Delta)^s v_\tau \frac{d v_\tau}{d \tau}\;dx\,d\tau.
$$
Then, \eqref{eq:t-limit} holds as long as the pairing
$$
\int_{\R^n} (-\Delta)^s v_\tau \frac{d v_\tau}{d \tau}\;dx
$$
is continuous in $\tau$. From the Lipschitz a priori estimate, we know that
$$
(-\Delta)^s v_\tau\to (-\Delta)^s v_0\qquad\mbox{strongly in continuous functions as}~\tau \to0,
$$
so we only need to check that weakly
$$
\frac{d v_\tau}{d \tau}\rightharpoonup \frac{d v_\tau}{d \tau}\bigg|_{\tau=0}.
$$
This follows directly from \cite{delgadino2022uniqueness}*{Lemma 4.3}, after noticing that both $v_0$ and $v_1$ are not degenerate. More specifically, $v_0$ and $v_1$ are twice differentiable around zero, and there exists a $c>0$ such that
$$
\max\{\Delta v_0(0),\Delta v_1(0)\}<-c.
$$
This follows because both $v_0$ and $v_1$ solve a fractional elliptic equation in a neighborhood of zero.


However, by the strict convexity of $\mathcal{E}_s(v_t)$, see Theorem~\ref{thm:nonlocal-convexity} and Proposition~\ref{prop:V}, we know  that
$$
\frac{d^+}{dt}\mathcal{E}_s(v_t)\bigg|_{t=0}<\frac{d^-}{dt}\mathcal{E}_s(v_t)\bigg|_{t=1},
$$
which is a contradiction. Therefore, for any given mass there is a unique critical point to $\mathcal{E}_s$, and it is given by \eqref{eq:explicitv}, see \cite{Dyda}.
\end{proof} 

\section*{Acknowledgements}

MGD and MV acknowledge the support of NSF-DMS RTG 18403. MGD research is partially supported by NSF-DMS 2205937. 
MV acknowledges the support of Australian Laureate Fellowship FL190100081 ``Minimal surfaces, free boundaries and partial differential equations.'' 
The authors would like to thank Yao Yao for discussions and encouragement in the early stages of this project. 

\bigskip

\noindent {\bf Data Availability:} Data sharing not applicable to this article as no datasets were generated or analysed during the current study.

\medskip

\noindent {\bf Compliance with ethical standards}

\medskip

\noindent {\bf Conflict of interest:} The authors declare that they have no conflict of interest.

\bibliographystyle{imsart-number}
\bibliography{ref}
\end{document}